\documentclass[12pt]{article}
\usepackage{amssymb}
\usepackage{amsmath}
\usepackage{lineno}
\usepackage{float}
\usepackage[mathscr]{eucal}
\usepackage{mathrsfs}
\usepackage{theorem}
\usepackage{mathtools}
\usepackage{comment}
\usepackage[shortlabels]{enumitem}
\usepackage{tikz-cd}
\usepackage{xparse}
\usepackage{geometry}
\NewDocumentCommand{\xrightarrows}{ O{}O{} }{%
\vcenter{\hbox{%
\begin{tikzpicture}
  \node[minimum width=1cm,minimum height=1ex,anchor=south,align=center] (a){\text{\vphantom{hg}#1}\\[0.5ex] \vphantom{hg}#2};
  \draw[<-] ([yshift=0.35ex]a.west) -- ([yshift=0.35ex]a.east);
  \draw[->] ([yshift=-0.35ex]a.west) -- ([yshift=-0.35ex]a.east);
\end{tikzpicture}
}}%
}%
\usepackage{xcolor}
\usepackage{graphicx}
\usepackage{tikz}
\usepackage{eso-pic}
\theoremstyle{change}
\newtheorem{theorem}{Theorem.}[section] 
\newtheorem{lem}[theorem]{Lemma.}
\newtheorem{prop}[theorem]{Proposition.}
\newtheorem{coro}[theorem]{Corollary.}
\theorembodyfont{\rmfamily}

\newtheorem{defi}[theorem]{Definition.}
\newtheorem{notation}[theorem]{Notation.}
\newtheorem{nothing}[theorem]{}

\newenvironment{proof}{\paragraph{Proof}}{\hfill$\square$}
\renewcommand{\le}{\leqslant} 

\newcommand{\Aut}{\mathrm{Aut}}

\newcommand{\calI}{\mathcal{I}}

\newcommand{\calP}{\mathcal{P}}

\newcommand{\calR}{\mathcal{R}}

\newcommand{\calS}{\mathcal{S}}

\newcommand{\Hom}{\mathrm{Hom}}

\newcommand{\id}{\mathrm{id}}
\newcommand{\im}{\mathrm{im}}

\newcommand{\Inn}{\mathrm{Inn}}

\newcommand{\Irr}{\mathrm{Irr}}

\newcommand{\Mat}{\mathrm{Mat}}

\newcommand{\Out}{\mathrm{Out}}

\newcommand{\Res}{\mathrm{Res}}

\newcommand{\Soc}{\mathrm{Soc}}

\title{Essential algebra of the shifted Burnside biset functor with abelian shift}
\author{
  Olcay CO\c{S}KUN\\
  \small Department of Mathematics\\
  \small Bo\u{g}azi\c{c}i University\\
  \small Bebek, 34342 Istanbul, T\"urkiye\\
  \small \texttt{olcay.coskun@bogazici.edu.tr}
  \and
  Ruslan MUSLUMOV\\
  \small SITE, ADA University\\
  \small 61 Ahmadbey Aghaoghlu Street\\
  \small Baku, AZ1008, Azerbaijan\\
  \small \texttt{rmuslumov@ada.edu.az}
}
\date{}

\begin{document}
\maketitle
\begin{abstract}
Let $T$ be a finite group and let $kB_T$ denote the shifted Burnside biset functor over a commutative ring $k$. We study the essential algebra $\widehat{kB_T}(G)$ of $kB_T$ at an arbitrary finite group $G$. We first establish criteria determining when a covering subgroup of $G\times G\times T$ factors, with respect to the star product, through a group of order strictly smaller than $|G|$; the first criterion places no hypothesis on $T$. As an application, we describe $\widehat{kB_T}(G)$ completely whenever no nontrivial quotient of $G$ is isomorphic to a section of $T$, extending the coprime case treated by Romero. When $T$ is abelian and $|T|$ is invertible in $k$, we prove that $\widehat{kB_T}(G)$ decomposes as a direct sum of matrix algebras over the group algebras of the groups $\Out_T(A)$, where $A$ runs over the linkage classes of reduced subgroups of $G\times T$, and we parametrize the simple modules of $\widehat{kB_T}(G)$.
\noindent \textbf{Keywords}:  Essential Algebra, shifted Burnside biset functor, simple modules
\end{abstract}

\section{Introduction}
Let $T$ be a finite group and $k$ be a commutative ring with identity. The \emph{shifted Burnside functor} $kB_T$ is the biset functor assigning to a finite group $G$ the Burnside module $kB_T(G):= kB(G\times T)$. It is the Yoneda-Dress construction (or shift) of the Burnside functor $kB$ at the group $T$, introduced by Bouc in \cite{B2008}, and \cite[Section 8.2]{B} as an analogue, for biset functors, of Dress' construction for Mackey functors. The shift at $T$ preserves Green biset functors and their modules (see \cite[Lemma 4.4]{R2}). In particular $kB_T$ is a Green biset functor. In \cite{B2008} Bouc proved that the rationality of $p$-biset functors is preserved under shifting. 

The shifted Burnside functor is a natural object of study in its own right. In \cite{B3}, Bouc extended the theory of $B$-groups to the shifted setting. For a field $\mathbb F$ of characteristic zero, he described the lattice of ideals of the Green biset functor $\mathbb F B_T$ in terms of relative $B$-groups and parametrized the simple subquotients of $\mathbb FB_T$ by these groups.

A second source of motivation is in connection with the fibered biset functors. Recall from \cite{BC} that, for a finite abelian group $T$, a \emph{$T$-fibered $G$-set} is a $G\times T$-set whose $T$-action is free. Hence the $T$-fibered Burnside ring $B^T(G)$ is a subgroup of $B_T(G)$ spanned by the classes of $T$-free $G\times T$-sets. Moreover the composition product of fibered biset functors is just the truncation of the composition in $B_T$ truncated to the $T$-free part. From this point of view, it is natural to expect the theory of $kB_T$ to be parallel to the theory of $kB^T$ developed in \cite{BC}, when the shifting group $T$ is abelian. One of the aims of this paper is to make this expectation precise at the level of essential algebras. 

On the other hand, the essential algebra of a Green biset functor is a fundamental tool in the classification of its simple modules, see \cite{B}, \cite{BC}, \cite{CM} and \cite{R2}. It also appears naturally in the study of correspondence functors \cite{BT1} and diagonal $p$-permutation functors \cite{BY}. Recall that the essential algebra of $kB_T$ at $G$ is the quotient $\widehat{kB_T}(G)$ of the endomorphism algebra $kB_T(G\times G)$ by the ideal $\calI_T(G)$ of morphisms factoring through groups of order strictly smaller than $|G|$. In her paper \cite{R1}, Romero initiated the study of the essential algebras of the shifted Burnside biset functor. However the description obtained there is restricted to two special cases: when both $G$ and $T$ are abelian, or when $|G|$ and $|T|$ are relatively prime.

In the present paper, we describe the essential algebra $\widehat{kB_T}(G)$ for an arbitrary finite group $G$, assuming only that the shifting group $T$ is abelian ant that $|T|$ is invertible in $k$. Our main result, Theorem \ref{thm: matrixalg}, is a decomposition
\[
\widehat{kB_T}(G) \cong \bigoplus_{\widetilde{A}\in \calR_{G\times T}/\sim} \Mat_{|\widetilde{A}|}\bigl( k\Gamma_A \bigr)
\]
of the essential algebra as a direct sum of matrix algebras over some group algebras. Here $\calR_{G\times T}$ is the set of \emph{reduced} subgroups of $G\times T$, that is, the subgroups $A$ whose associated idempotent $\widehat{e}_A$ is nonzero in $\widehat{kB_T}(G)$. The sum runs over the \emph{linkage classes} $\widetilde{A}$ of reduced subgroups and $\Gamma_A$ is a finite group, which we prove to be isomorphic to the group $\Out_T(A)$ of outer automorphism of $A$ fixing the second component. As a consequence we obtain a parametrization of the simple $\widehat{kB_T}(G)$-modules by the pairs $(A, [V])$, where $A$ runs over the representatives of the linkage classes and $[V]$ runs over the isomorphism classes of simple $k\Gamma_A$-modules. 
See Theorem \ref{thm: simples}. We note that these results are of the same type as the corresponding results for fibered biset functors in \cite{BC} and for modules over the section Burnside functors in \cite{CM}.

A key result of Romero is a parametrization of certain subgroups $D$ of $G\times G\times T$ in terms of subgroups $A$ of $G\times T$ together with a surjective homomorphism from $A$ onto $G$. We refer to such subgroups $D$ as \emph{covering subgroups}, since they are closely related to the covering subgroups appearing in \cite{BC} and \cite{CM}. Although the construction of covering algebra analogous to  those of fibered biset functors is not available in the shifted setting, we determine precisely when a covering subgroup factors through a group of order strictly smaller than $|G|$ with respect to the star product. We call such subgroups $\ast$-factorable. More precisely we obtain two factorization criteria. The first one, Theorem \ref{thm: description2}, holds for an arbitrary shifting group $T$. Using this result, we derive a second criterion, Theorem \ref{thm: description3}, under a centrality condition that always holds when $T$ is abelian. We make it fully explicit in the abelian case in Corollary \ref{cor:abelian_nonvanish}. We prove that, when $T$ is abelian, a basis element of the endomorphism algebra vanishes in the essential algebra if and only if the corresponding covering subgroup is $\ast$-factorable. This gives an explicit basis of $\widehat{kB_T}(G)$ and from which we obtain the above main result by constructing a family of orthogonal central idempotents.

This paper is organized as follows. In Section 2, we fix the notation and recall the basic facts on Green biset functors and on the shifted Burnside functor that are used throughout the paper. In Section 3, we introduce covering subgroups and prove the factorization criteria; the first criterion is valid for an arbitrary shifting group $T$, and the remaining results of the section specialize it, first under a centrality condition and then to the abelian case. In Section~4, we characterize the vanishing of the basis elements of the essential algebra. We then extend, in Theorem~\ref{thm: nosection}, Romero's description of the essential algebra to the case where no nontrivial quotient of \(G\) is isomorphic to a section of \(T\). Throughout the remainder of the paper, beginning with Section~\ref{sec: abelian}, we assume that \(T\) is abelian and that \(|T|\in k^{\times}\). We introduce two families of idempotents, \(\widehat{e}_A\) and \(\widehat{f}_A\), indexed by reduced subgroups of \(G\times T\). In Section 5, we introduce the linkage relation and use it to define orthogonal central idempotents, which yield a decomposition of the essential algebra into a direct sum of two-sided ideals; we also define the groups $\Gamma_A$ and identify them with $\Out_T(A)$. Finally, in Section 6, we prove the decomposition theorem and the parametrization of the simple modules of the essential algebra.

\section{Setup and conventions}


Throughout, let $k$ be a commutative ring with identity. We recall basic facts on Green biset functors from \cite{B} and on the shifted Burnside functor from \cite{R1}. Following Romero, we write $GHT$ for the triple direct product $G\times H\times T$, the last factor always being the shifting group $T$. We keep the usual product notation when only two groups occur. 
\begin{notation}
Let $D \leq GHK$. For $i = 1,2,3$, we denote by $p_i(D)$ the projection of $D$ onto the $i$-th factor, namely $G$, $H$, and $K$, respectively. For any subset of indices, we denote by $p_{i,j}(D)$ the projection of $D$ onto the corresponding direct product. We also define
\[
k_1(D) := \{ g \in G \mid (g,1,1) \in D \},
\]
which is a normal subgroup of $p_1(D)$. Similarly, one defines $k_2(D)$ and $k_3(D)$.
Furthermore, we set
\[
k_{1,2}(D) := \{ (g,h) \in G \times H \mid (g,h,1) \in D \},
\]
which is a normal subgroup of $p_{1,2}(D)$. The subgroups $k_{1,3}(D)$ and $k_{2,3}(D)$ are defined in the same way.

When there are only two groups $G$ and $T$, for $A\le G\times T$ we have $p_1(A)\le G$, $p_2(A)\le T$, and
\[
k_1(A)=\{g\in G\mid (g,1)\in A\}\trianglelefteq p_1(A),\quad
k_2(A)=\{t\in T\mid (1,t)\in A\}\trianglelefteq p_2(A).
\]
\end{notation}
\begin{nothing}
\textbf{Star product.}
Let $G$, $H$, $K$, and $T$ be finite groups, and let $E \leq GHT$ and $F \leq HKT$. The \emph{star product} of $E$ and $F$, denoted by $E * F$, is defined by
\[
E * F = \{ (g,k,t) \in GKT \mid \exists h \in H : (g,h,t) \in E,\ (h,k,t) \in F \}\le GKT.
\]
Let $D\le GGT$. We say that $D$ is \emph{star-factorable or $\ast$-factorable} if there is a group $H$ with $|H| < |G|$ such that $D = E \ast F$ for some subgroups $E\le GHT$ and $F\le HGT$.   
\end{nothing}

\begin{nothing}{\textbf{Shifted Burnside functor.}}
Let $G$ be a finite group. Recall that the Burnside group $B(G)$ of $G$ is the Grothendieck group of the category of $G$-sets. It is the free abelian group on the isomorphism classes of transitive $G$-sets. We denote the image of a transitive $G$-set $G/H$ in $B(G)$ by $[G/H]$. We also write $kB(G) := k\otimes B(G)$ and $kB$ for the biset functor over $k$ of Burnside groups. It is also a Green biset functor and the category of biset functors is equivalent to the category of modules over $kB$, see \cite[Section 8.5 \& 8.6]{B}. 

Let $T$ be a fixed finite group. The \emph{shifted Burnside functor} $kB_T$ is the Yoneda--Dress construction at $T$ of the Burnside biset functor $kB$. In other words, by \cite[Section 8.2]{B}, the shifted Burnside functor is the biset functor that assigns to each group $G$ the Burnside module $kB(G \times T)$. For every $(H,G)$-biset $X$, the functor $kB_T$ defines a morphism of $k$-modules
\[
kB_T(X) : kB_T(G) \longrightarrow kB_T(H),
\]
given by mapping $y \in kB_T(G)$ to $kB(X \times T)(y)$. Moreover by \cite[Lemma 4.4]{R1} the functor $kB_T$ is a Green biset functor. 

As described in \cite[Proposition 8.6.1]{B} for modules over a Green biset functor, the category of $kB_T$-modules is equivalent to the category of $k$-linear functors from the category $\calP_{kB_T}$ to the category of $k$-modules. The objects of $\calP_{kB_T}$ are finite groups and morphisms from $G$ to $H$ are given by $kB_T(H\times G)$. The composition of morphisms $\alpha \in kB_T(H \times G)$ and $\beta \in kB_T(K \times H)$ in this category is
$$\beta \circ \alpha = kB_T\bigl(K \times \overset{\leftarrow}{H} \times G\bigr)(\beta \times \alpha),$$
where $\overset{\leftarrow}{H}$ is the $(1, H \times H)$-biset whose underlying set is $H$, with right action of $H \times H$ defined by $h \cdot (x,y) = y^{-1} h x \quad \text{for } h, x,y \in H$. If $X$ is a $GHT$-set and $Y$ is an $HKT$-set, we write $X \times_H Y$ for the usual amalgamated product over $H$, that is, the set of $H$-orbits on $X \times Y$ under the action $(x,y)\cdot h = (x\cdot h, h^{-1}\cdot y)$.
Then we define $X \times_H^d Y$ to be the $G \times K$-set $X \times_H Y$ equipped with the diagonal action of $T$ given by $t \cdot [x,y] = [tx, ty], \quad \text{for all } t \in T$. With this notation, the composition $\circ$ above is the $k$-linear extension of $\times^d$. The following lemma, which is \cite[Lemma 4.5]{R2}, describes the composition of two transitive basis elements as the usual Mackey product formula.
\end{nothing}

\begin{lem}[Romero] \label{Mackey}
Let $G, H, K,$ and $T$ be finite groups. Let $E\le GHT$ and $F\le HKT$. Then there is an isomorphism
\[
\Big(\frac{GHT}{E}\Big) \times_H^d \Big(\frac{HKT}{F}\Big) \;\cong\; \bigsqcup_{(h,t)} \Big(\frac{GKT}{E*^{(h, 1, t)}F}\Big)
\]
of $GKT$-sets, where $(h,t)$ runs through a set of representatives of the double cosets
$p_{2,3}(E) \backslash (H \times T) / p_{1,3}(F)$.
\end{lem}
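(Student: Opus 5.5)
We would prove Lemma \ref{Mackey} the same way as the ordinary Mackey formula for bisets, viewing $X\times_H^d Y$ as a quotient of a product and decomposing into orbits. Put $X=GHT/E$ and $Y=HKT/F$, and let $L$ be the subgroup $\{(g,h,k,t)\}\cong G\times H\times K\times T$ acting on $X\times Y$ by
\[
(g,h,k,t)\cdot(x,y)=\bigl((g,h,t)x,\ (h,k,t)y\bigr)
\]
(for $X$ the $H$-coordinate acts on the right, so it is written inversely). Then $X\times_H^d Y$ is the set of $H$-orbits of the $G\times H\times K\times T$-set $X\times Y$, where the action of the diagonal $T$ is the given one. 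Both sides of the claim are $GKT$-sets, so it suffices to decompose $X\times Y$ into $GHKT$-orbits and then pass to $H$-orbits. An $H$-orbit of a transitive $GHKT$-set $GHKT/S$ is the transitive $GKT$-set $GKT/\pi(S)$, where $\pi$ drops the $H$-coordinate.

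\textbf{Step 1.} We restrict the $GHTGHKT$-type action to the "doubled" group. The product $X\times Y$ is a transitive $(GHT)\times(HKT)$-set with stabilizer $E\times F$. We restrict it along the embedding $\delta: GHKT\to (GHT)\times(HKT)$, $(g,h,k,t)\mapsto((g,h,t),(h,k,t))$. The orbits of the image of $\delta$ on $(GHT\times HKT)/(E\times F)$ correspond to double cosets $\delta(GHKT)\backslash (GHT\times HKT)/(E\times F)$. Since $\delta(GHKT)$ already contains all of $G$ in the first factor and all of $K$ in the second, these double cosets reduce to $p_{2,3}(E)\backslash (H\times T)/p_{1,3}(F)$. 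Concretely, a representative can be taken as $(1,(h,1,t))$: we multiply the second coordinate by $(h',1,t')^{-1}$ using $\delta$, then absorb on the right by $E$ and $F$. This bookkeeping is the step needing most care. It requires checking that the residual ambiguity is exactly the left action of $p_{2,3}(E)$ and the right action of $p_{1,3}(F)$ on $H\times T$.

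\textbf{Step 2.} We compute the stabilizer of the point $(E,(h,1,t)F)$ in $GHKT$. It is the set of $(g,h',k,t')$ with $(g,h',t')\in E$ and $(h',k,t')\in {}^{(h,1,t)}F$.

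\textbf{Step 3.} Projecting away the $H$-coordinate, this stabilizer maps onto
\[
\{(g,k,t')\mid \exists h' : (g,h',t')\in E,\ (h',k,t')\in{}^{(h,1,t)}F\}=E*{}^{(h,1,t)}F.
\]
Since $H$ acts freely on the orbits in the sense of the amalgamated product, the $H$-orbit set of this orbit is $GKT/(E*{}^{(h,1,t)}F)$. Summing over representatives gives the formula.

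The main obstacle is making the identification in Step 1 of the double cosets precise, including the conjugation convention $^{(h,1,t)}F$, so that the conjugating element lands exactly as stated rather than as its inverse.
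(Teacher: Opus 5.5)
Your proposal is correct and is the standard Mackey-formula argument. The paper gives no proof of this lemma and only quotes it from Romero \cite[Lemma 4.5]{R2}. The bookkeeping you flag works out exactly as stated: the map $((g_1,h_1,t_1),(h_2,k_2,t_2))\mapsto(h_1^{-1}h_2,\,t_1^{-1}t_2)$ identifies $\delta(GHKT)\backslash(GHT\times HKT)$ with $H\times T$. It turns the right action of $(e_1,e_2,e_3)\in E$, $(f_1,f_2,f_3)\in F$ into $(u,v)\mapsto(e_2,e_3)^{-1}(u,v)(f_1,f_3)$, and conjugating $E\times F$ by the representative $(1,(h,1,t))$ gives $E\times{}^{(h,1,t)}F$.

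The one slip is the claim in Step 3 that $H$ acts freely, which is false in general. It is also not needed: passing to $H$-orbits works because $1\times H\times 1\times 1$ is normal in $GHKT$, so the $H$-orbits of $GHKT/S$ form $GHKT/HS\cong GKT/\pi(S)$.
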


\begin{nothing}
The main object of this paper is the endomorphism algebra of a finite group $G$ in the category $\calP_{kB_T}$. It is the $k$-algebra $kB_T(G \times G)$ with the multiplication induced by the composition of morphisms via the product $\times_G^d$, and with the identity element $[GGT/(\Delta(G) \times T)]$.    
\end{nothing}


\section{Factorization Criteria}


Let $G$ and $T$ be fixed finite groups. Recall that the essential algebra is the quotient of the endomorphism algebra $kB_T(G\times G)$ by the ideal $\calI_T(G)$ of morphisms factoring through groups of order strictly smaller that $|G|$. By \cite[Section 3]{R1}, a basis element $[GGT/D]$ factors through such a group whenever $p_1(D)\neq G$, $p_2(D)\neq G$, $k_1(D)\neq 1$, or $k_2(D)\neq 1$. We therefore call a subgroup $D\le GGT$ a \emph{covering subgroup} when $p_1(D) = p_2(D) = G$ and $k_1(D) = k_2(D) = 1$ in analogy with the covering elements of \cite{BC} and \cite{CM}. These are precisely the subgroups isolated by Romero, and the only ones whose classes can be nonzero in the essential algebra. 

The covering conditions are necessary but not sufficient, a covering subgroup may be $\ast$-factorable. Moreover, in contrast to \cite{BC} or \cite{CM}, the covering subgroups do not generate a covering algebra, so their factorization must be analyzed directly. We first establish a criterion valid for an arbitrary shifting group $T$. We then formulate it under a centrality condition that holds automatically when $T$ is abelian. Finally make it fully explicit in the abelian case.

\begin{nothing}\label{covering}
Let $D \leq GGT$ be a {covering subgroup}, that is,
\begin{equation*}
p_1(D) = p_2(D) = G \quad \text{and} \quad k_1(D) = k_2(D) = 1.
\end{equation*}
As in \cite[Section 3]{R1}, it is straightforward to verify that these conditions are equivalent to $D$ having the form
\begin{equation*}
D = \{ (\alpha(g,t), g, t) \mid (g,t) \in A \},
\end{equation*}
where $A = p_{2,3}(D) \leq G \times T$ with $p_1(A) = G$, and
$\alpha : A \twoheadrightarrow G$
is a surjective group homomorphism such that
\[
\ker(\alpha) \cap \bigl(k_1(A) \times 1\bigr) = 1.
\]
With this notation, to each covering subgroup $D$ of $GGT$, we associate the unique pair $(A,\alpha)$.

On the other hand we recall that, by Goursat's Lemma, any subgroup $A \leq G \times T$ corresponds to a quintuple $\bigl(p_1(A),\, k_1(A),\, \varphi,\, k_2(A),\, p_2(A)\bigr)$,
where $\varphi : p_1(A)/k_1(A) \to p_2(A)/k_2(A)$ is an isomorphism.  
\end{nothing}

\begin{nothing}\label{section3 conventions}
Throughout this section $D$ denotes a covering subgroup of $GGT$ with associated pair $(A,\alpha)$
and Goursat quintuple $(G,N,\varphi,K,T_0)$; thus $N=k_1(A)$, $K=k_2(A)$, $T_0=p_2(A)$, and
$\varphi\colon G/N \xrightarrow{\ \sim\ } T_0/K$. By Goursat's lemma, $|A| = |G|\,|K| = |N|\,|T_0|$.
\end{nothing}
\begin{nothing}\textbf{Invariants.}\label{sec: invariants}
Let \(G\) be a finite group and let $D$ be a covering subgroup of $GGT$. Associated to \(D\), we define its \emph{left} and \emph{right} invariants
\[
L := L_D := p_{1,3}(D) \leq GT \quad \text{and} \quad R := R_D := p_{2,3}(D) \leq GT.
\]
We also set the opposite subgroup to be 
\[
D^{\mathrm{op}} := \{(h,g,t) \in GGT \mid (g,h,t) \in D\}\le GGT.
\]
\end{nothing}
\begin{lem}\label{lem: productofcovering}
Let \(E,F\leq GGT\) be covering subgroups. Let \((A,\alpha)\) and \((B,\beta)\) be the corresponding pairs associated to \(E\) and \(F\), respectively. Then
\begin{equation*}
E*F
=
\{(\alpha(\beta(g,t),t),g,t)\mid (g,t)\in B \text{ and } (\beta(g,t),t)\in A\}.
\end{equation*}
\end{lem}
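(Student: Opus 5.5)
This is a direct double inclusion obtained by unwinding the definition of the star product. We use the parametrization of covering subgroups in \ref{covering}:
\[
E=\{(\alpha(a,s),a,s)\mid (a,s)\in A\}, \qquad F=\{(\beta(b,s),b,s)\mid (b,s)\in B\}.
\]
Here $G=H=K$ in the definition of the star product, and the third coordinate $t$ is shared by both factors.

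For the inclusion ``$\subseteq$'', let $(x,g,t)\in E*F$. By definition there is $h\in G$ with $(x,h,t)\in E$ and $(h,g,t)\in F$.

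\begin{itemize}
\item Since $(h,g,t)\in F$, the description of $F$ gives $(g,t)\in B$ and $h=\beta(g,t)$. The point is that each element of $F$ is determined by its last two coordinates, because $F$ is the graph of $\beta$ over $B=p_{2,3}(F)$.
\item Since $(x,h,t)=(x,\beta(g,t),t)\in E$, the description of $E$ gives $(\beta(g,t),t)\in A$ and $x=\alpha(\beta(g,t),t)$.
\end{itemize}

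Hence $(x,g,t)$ lies in the right-hand set.

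For the inclusion ``$\supseteq$'', let $(g,t)\in B$ with $(\beta(g,t),t)\in A$, and put $h:=\beta(g,t)$.

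\begin{itemize}
\item Then $(h,g,t)=(\beta(g,t),g,t)\in F$.
\item Also $(\alpha(h,t),h,t)\in E$.
\end{itemize}

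So $h$ witnesses that $(\alpha(\beta(g,t),t),g,t)\in E*F$.

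There is no real obstacle here. The only point needing care is that the intermediate element $h$ is uniquely forced to equal $\beta(g,t)$. This uses the graph description of \ref{covering}, which relies on $k_1(F)=1$ (equivalently, $F$ is the graph of the map $\beta$ on $p_{2,3}(F)$). We only need that $E$ and $F$ are graphs in this sense; surjectivity of $\alpha$ and $\beta$ plays no role in this computation.
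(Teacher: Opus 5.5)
Your proposal is correct and follows essentially the same direct double-inclusion argument as the paper: you use the graph descriptions of $E$ and $F$ to force the intermediate element to be $h=\beta(g,t)$. You are slightly more explicit than the paper in recording, for the inclusion $E*F\subseteq\{\cdots\}$, that $(g,t)\in B$ and $(\beta(g,t),t)\in A$.
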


\begin{proof}
Let \((g,t)\in B\) such that \((\beta(g,t),t)\in A\). Then
\((\alpha(\beta(g,t),t),\beta(g,t),t)\in E\)
and
\((\beta(g,t),g,t)\in F\).
Hence
\((\alpha(\beta(g,t),t),g,t)\in E*F\).

Conversely, let \((g_1,g_2,t)\in E*F\). By definition of the star product, there exists \(h\in G\) such that
\((g_1,h,t)\in E\)
and
\((h,g_2,t)\in F\).
Since \(E\) and \(F\) correspond to the pairs \((A,\alpha)\) and \((B,\beta)\), respectively, we obtain
\(g_1=\alpha(h,t)\)
and
\(h=\beta(g_2,t)\).
Therefore
\[
(g_1,g_2,t)
=
(\alpha(\beta(g_2,t),t),g_2,t),
\]
showing that every element of \(E*F\) is of the required form.
\end{proof}

The subgroups $A\le G\times T$ with $p_1(A)=G$ will play a central role from Section~4 onward. The following lemma determines when such a subgroup is normal in the ambient direct product.

\begin{lem}\label{lem: Anormal}
Let $H$ be a finite group and let $A\le H\times T$ with $p_1(A)=H$. The following are equivalent:
\begin{enumerate}[leftmargin=2em]
\item $A\unlhd H\times T$;
\item $1\times T\le N_{H\times T}(A)$;
\item $[p_2(A),\,T]\le k_2(A)$.
\end{enumerate}
In particular, if $T$ is abelian, then every subgroup $A\le H\times T$ with $p_1(A)=H$ is normal in $H\times T$.
\end{lem}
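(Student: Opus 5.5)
The plan is to prove the cycle (1)$\Rightarrow$(2)$\Rightarrow$(3)$\Rightarrow$(1), with the key observation that, since $p_1(A)=H$, normality of $A$ in $H\times T$ only has to be checked against the factor $1\times T$. Indeed $H\times T$ is generated by $A$ together with $1\times T$: for $(h,t)\in H\times T$, choose $s\in T$ with $(h,s)\in A$ (possible because $p_1(A)=H$). Then $(h,t)=(h,s)(1,s^{-1}t)$. Consequently $N_{H\times T}(A)\ge A$, and $N_{H\times T}(A)=H\times T$ as soon as it contains $1\times T$. This settles (2)$\Rightarrow$(1); the implication (1)$\Rightarrow$(2) is trivial.

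For (2)$\Leftrightarrow$(3), compute the conjugation directly. For $(h,s)\in A$ and $t\in T$ we have
\[
{}^{(1,t)}(h,s)=(h,tst^{-1})=(h,s)\,(1,s^{-1}tst^{-1}).
\]
So ${}^{(1,t)}(h,s)\in A$ if and only if $(1,[s^{-1},t])\in A$, that is, if and only if $[s^{-1},t]\in k_2(A)$. As $(h,s)$ ranges over $A$, the element $s$ ranges over all of $p_2(A)$, and hence so does $s^{-1}$.

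It follows that $1\times T$ normalizes $A$ exactly when $[s,t]\in k_2(A)$ for all $s\in p_2(A)$ and $t\in T$. It remains to pass from this statement about generators to the full subgroup $[p_2(A),T]$. This needs only that $k_2(A)$ is a subgroup: $[p_2(A),T]$ is generated by the commutators $[s,t]$, so the two conditions agree. Here the commutator convention is immaterial, since $[s,t]^{-1}=[t,s]$.

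The final sentence is immediate: if $T$ is abelian, then $[p_2(A),T]=1\le k_2(A)$, so (3) holds. There is no real obstacle in this proof. The only step needing care is the generation argument in the first paragraph, which is where the hypothesis $p_1(A)=H$ is used.
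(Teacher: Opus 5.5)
Your proposal is correct and follows essentially the paper's own argument: (1)$\Leftrightarrow$(2) via $H\times T=A\,(1\times T)$ (using $p_1(A)=H$), and (2)$\Leftrightarrow$(3) via the computation ${}^{(1,t)}(h,s)=(h,s)(1,s^{-1}tst^{-1})$. Your extra remark, that it suffices to check the generating commutators of $[p_2(A),T]$ because $k_2(A)$ is a subgroup, makes explicit a step the paper leaves tacit.
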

\begin{proof}
Since $1\times T$ is normal in $H\times T$, the product $A\cdot(1\times T)$ is a subgroup, and it equals $p_1(A)\times T=H\times T$. As $A$ normalizes itself, $N_{H\times T}(A)\supseteq A$, so $N_{H\times T}(A)=H\times T$ if and only if $1\times T\le N_{H\times T}(A)$; this proves the equivalence of (1) and (2). For $(a,s)\in A$ and $t\in T$ we have ${}^{(1,t)}(a,s)=(a,tst^{-1})$, and since $(a,s)\in A$, the element $(a,tst^{-1})$ lies in $A$ if and only if $(a,s)^{-1}(a,tst^{-1})=(1,\,s^{-1}tst^{-1})$ does, if and only if $s^{-1}tst^{-1}\in k_2(A)$. As every $s\in p_2(A)$ occurs in some $(a,s)\in A$, this proves the equivalence of (2) and (3). The last statement is immediate, since $[p_2(A),T]=1$ when $T$ is abelian.
\end{proof}

\begin{theorem}
\label{thm: description2}
Assume conventions from Section \ref{section3 conventions}. Then $D$ is $\ast$-factorable if and only if there is a normal subgroup $\widetilde M \trianglelefteq A$ with
\[
\widetilde M \cap (N\times 1) = 1 \qquad\text{and}\qquad |\widetilde M| > |K|.
\]
\end{theorem}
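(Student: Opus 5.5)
The plan is to prove both directions by working with the parametrization $D=\{(\alpha(a),a)\mid a\in A\}$ from \ref{covering}. Throughout, an element $a\in A$ is written $a=(g,t)$. We also use repeatedly that $N\times 1=A\cap(G\times 1)$.

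\emph{Sufficiency.} Suppose $\widetilde M\trianglelefteq A$ satisfies $\widetilde M\cap(N\times 1)=1$ and $|\widetilde M|>|K|$.
\begin{itemize}
\item Put $H:=A/\widetilde M$. By \ref{section3 conventions}, $|A|=|G||K|$, so $|H|=|G||K|/|\widetilde M|<|G|$.
\item Define
\[
F:=\{(a\widetilde M,\,g,\,t)\mid a=(g,t)\in A\}\le HGT,\qquad E:=\{(\alpha(a),\,a\widetilde M,\,t)\mid a=(g,t)\in A\}\le GHT .
\]
Both are images of $A$ under homomorphisms, hence subgroups.
\item The inclusion $D\subseteq E\ast F$ is clear: for $a\in A$ take $h=a\widetilde M$.
\item For the reverse inclusion, let $(x,g,t)\in E\ast F$. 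There is $h$ with $(h,g,t)\in F$ and $(x,h,t)\in E$. So $h=a\widetilde M$ with $a=(g,t)\in A$, and $x=\alpha(a')$, $h=a'\widetilde M$ with $a'=(g',t)\in A$.
\item Then $a'a^{-1}\in\widetilde M$. Its $T$-component is $1$, so $a'a^{-1}\in \widetilde M\cap(G\times 1)=\widetilde M\cap(N\times 1)=1$.
\item Hence $a'=a$ and $(x,g,t)=(\alpha(a),a)\in D$. This gives $D=E\ast F$, so $D$ is $\ast$-factorable.
\end{itemize}

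\emph{Necessity.} Suppose $D=E\ast F$ with $E\le GHT$, $F\le HGT$ and $|H|<|G|$.
\begin{itemize}
\item Define
\[
C:=\{(h,g,t)\in F\mid (g,t)\in A,\ (\alpha(g,t),h,t)\in E\}.
\]
This is a subgroup, because $E$ and $F$ are subgroups and $\alpha$ is a homomorphism.
\item Since $D\subseteq E\ast F$, for each $a=(g,t)\in A$ there is $h$ with $(\alpha(a),h,t)\in E$ and $(h,g,t)\in F$. So the projection $p_{2,3}(C)$ is all of $A$.
\item Let $\widetilde M:=k_{2,3}(C)=\{a\in A\mid (1,a)\in C\}$. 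By the Goursat-type count this is normal in $p_{2,3}(C)=A$, and $|A|=|\widetilde M|\,|p_1(C)|\le|\widetilde M||H|$.
\item Hence $|\widetilde M|\ge |A|/|H|>|A|/|G|=|K|$.
\end{itemize}

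For the intersection condition, let $(n,1)\in\widetilde M$ with $n\in N$. Then $(1,n,1)\in F$ and $(\alpha(n,1),1,1)\in E$.
\begin{itemize}
\item Combining $(\alpha(n,1),1,1)\in E$ with $(1,1,1)\in F$ gives $(\alpha(n,1),1,1)\in E\ast F=D$. Since $k_1(D)=1$, this forces $\alpha(n,1)=1$.
\item Then $(1,1,1)\in E$ and $(1,n,1)\in F$ give $(1,n,1)\in D$. Since $k_2(D)=1$, this forces $n=1$.
\end{itemize}
Thus $\widetilde M\cap(N\times 1)=1$.

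The main obstacle is the necessity direction, specifically finding the right auxiliary subgroup $C$. Neither $F$ alone nor $E$ alone carries enough information. Only the coupled subgroup $C$, whose membership is tied through $\alpha$ to $D$, both surjects onto $A$ and lets the covering conditions $k_1(D)=k_2(D)=1$ rule out intersection with $N\times 1$.
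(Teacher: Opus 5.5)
Your proof is correct. The sufficiency direction is the paper's construction: $H=A/\widetilde M$, with $E$ and $F$ the two graph-like images of $A$. Your remark that $a'a^{-1}$ lies in $\widetilde M\cap(G\times 1)=\widetilde M\cap(N\times 1)$ spells out a step the paper states tersely.

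Necessity is where you take a different route.
\begin{itemize}
\item The paper first applies Romero's reduction (Lemma 3.4 of [R1]) to arrange $p_1(F)=H$ and $k_1(F)=1$. Then $F$ itself is the graph of a homomorphism $\psi$ from $A$ to $H$, and $\widetilde M=\ker\psi$.
\item You keep an arbitrary factorization and form the coupled subgroup $C\le H\times A$, which plays the role of that graph. You then extract $\widetilde M=k_{2,3}(C)$ from Goursat's lemma.
\end{itemize}
Your route is self-contained, since it needs no reduction lemma. It also only uses the inequality $|A|\le|\widetilde M|\,|H|$, so no map ever has to be onto $H$. The paper's route is shorter once the reduction is granted. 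The check that $\widetilde M\cap(N\times 1)=1$ is the same in both.

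Two minor points:
\begin{itemize}
\item The Goursat count is $|A|=|\widetilde M|\,|p_1(C):k_1(C)|$, not $|A|=|\widetilde M|\,|p_1(C)|$. Here $k_1(C)=k_1(F)\cap k_2(E)$ need not be trivial for an unreduced factorization. The inequality you actually use is unaffected.
\item The step showing $\alpha(n,1)=1$ is superfluous. Since $(1,1,1)\in E$ and $(1,n,1)\in F$, you already have $(1,n,1)\in E\ast F=D$.
\end{itemize}
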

\begin{proof}
Suppose $D = E * F$ for some group $H$ with $|H|<|G|$ and subgroups $E \leq GHT$ and $F \leq HGT$. By~\cite[Lemma 3.4]{R1}, we may assume that $p_1(F)=H$ and $k_1(F)=1$. 

Define a map $\psi : A \to H$ by setting $\psi(g,t)=h$ whenever $(h,g,t)\in F$. This map is well defined since $k_1(F)=1$, and it is surjective because $p_1(F)=H$.
Set $\widetilde{M} = \ker(\psi)$. By our assumption, $|H|<|G|$ and $|\widetilde{M}| = |A|/|H| = |G||K|/|H| > |K|$. For the first condition, let $(n,1)\in \widetilde{M}$ for some $n\in N$. Then \((1,n,1)\in F\). Since \((1,1,1)\in E\) and \(D=E*F\) by assumption, it follows that \((1,n,1)\in D\). But $D$ is a \text{covering} subgroup, hence we have $k_2(D)=1$, and therefore $n=1$. Thus $\widetilde{M}\cap (N\times 1)=1$.

For the converse, suppose there exists a normal subgroup $\widetilde{M}\unlhd A$ 
satisfying \( \widetilde{M}\cap (N \times 1)=1\). Set $L = A/\widetilde{M}$. We have $|L| = |A|/|\widetilde{M}| = |G|\ |K|/|\widetilde{M}| < |G|$, so we may look for a factorization over $L$.
Define subgroups $E\leq GLT$ and $F\leq LGT$ by
\begin{eqnarray*}
E &=& \{(\alpha(g,t),\psi(g,t),t)\mid (g,t)\in A\}\\
F &=& \{(\psi(g,t),g,t)\mid (g,t)\in A\},
\end{eqnarray*}
where $\psi$ is the canonical surjection from $A$ to $L$.

It is immediate that $D \subseteq E*F$. For the reverse inclusion, let $(\alpha(g_1,t),\psi(g_1,t),t)\in E$ and $(\psi(g_2,t),g_2,t)\in F$ such that $\psi(g_1,t)=\psi(g_2,t)$. Then $(\alpha(g_1,t),g_2,t)\in E*F$. Since $\widetilde{M}\cap (N\times 1)=1$, it follows that $g_1=g_2$, and $(g_2,t)\in A$. Therefore
\[
(\alpha(g_1,t),g_2,t)=(\alpha(g_2,t),g_2,t)\in D.
\]
Thus $E*F\subseteq D$, and the proof is complete.
\end{proof}
\begin{coro}\label{cor:invariance}
Let $D$ and $D'$ be covering subgroups of $GGT$.
\begin{enumerate}[leftmargin=2em]
  \item $D$ is $\ast$-factorable if and only if $D^{\mathrm{op}}$ is $\ast$-factorable.
  \item If $p_{2,3}(D)=p_{2,3}(D')$, or if $p_{1,3}(D)=p_{1,3}(D')$, then $D$ is $\ast$-factorable if and only if $D'$ is. In particular, $\ast$-factorability of a covering subgroup depends only on either of its invariants.
\end{enumerate}
\end{coro}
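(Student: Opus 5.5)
The plan is to prove part (1) directly from the definition of the star product, without using Theorem \ref{thm: description2}. Part (2) will then follow from Theorem \ref{thm: description2}, applied to $D$ for the right invariant and to $D^{\mathrm{op}}$ for the left invariant.

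For (1), I first check that $D^{\mathrm{op}}$ is again a covering subgroup. Swapping the first two coordinates interchanges $p_1$ with $p_2$ and $k_1$ with $k_2$, so the covering conditions on $D$ give them for $D^{\mathrm{op}}$. For any $E\le GHT$ and $F\le HKT$, define the opposites $E^{\mathrm{op}}\le HGT$ and $F^{\mathrm{op}}\le KHT$ in the same way. I then verify the identity
\[
(E\ast F)^{\mathrm{op}} = F^{\mathrm{op}}\ast E^{\mathrm{op}}.
\]
Indeed, $(g,k,t)\in E\ast F$ holds exactly when there is $h$ with $(g,h,t)\in E$ and $(h,k,t)\in F$. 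This is equivalent to $(k,h,t)\in F^{\mathrm{op}}$ and $(h,g,t)\in E^{\mathrm{op}}$, that is, to $(k,g,t)\in F^{\mathrm{op}}\ast E^{\mathrm{op}}$. Hence a factorization $D=E\ast F$ through $H$ with $|H|<|G|$ gives a factorization $D^{\mathrm{op}}=F^{\mathrm{op}}\ast E^{\mathrm{op}}$ through the same $H$. Since $(D^{\mathrm{op}})^{\mathrm{op}}=D$, the converse follows by symmetry.

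For (2), suppose first that $p_{2,3}(D)=p_{2,3}(D')=A$. By Theorem \ref{thm: description2}, $\ast$-factorability of a covering subgroup is equivalent to the existence of a normal subgroup $\widetilde M\trianglelefteq A$ with $\widetilde M\cap(N\times 1)=1$ and $|\widetilde M|>|K|$. Here $N=k_1(A)$ and $K=k_2(A)$, so the whole criterion is expressed in terms of $A$ alone and does not involve $\alpha$. It is therefore the same condition for $D$ and $D'$. Now suppose $p_{1,3}(D)=p_{1,3}(D')$. Since $p_{2,3}(D^{\mathrm{op}})=p_{1,3}(D)$, the covering subgroups $D^{\mathrm{op}}$ and $D'^{\mathrm{op}}$ have equal $p_{2,3}$-projections. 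By the first case, $D^{\mathrm{op}}$ is $\ast$-factorable if and only if $D'^{\mathrm{op}}$ is, and part (1) transfers this equivalence back to $D$ and $D'$.

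The final sentence of the corollary is a restatement of (2), since $R_D=p_{2,3}(D)$ and $L_D=p_{1,3}(D)$. There is no serious obstacle in this argument. The only points needing care are checking that $D^{\mathrm{op}}$ satisfies the covering hypotheses required by Theorem \ref{thm: description2}, and getting the order reversal $F^{\mathrm{op}}\ast E^{\mathrm{op}}$ right.
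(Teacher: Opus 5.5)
Your proposal is correct and follows the paper's proof: part (1) comes from the identity $(E\ast F)^{\mathrm{op}}=F^{\mathrm{op}}\ast E^{\mathrm{op}}$ over the same group $H$. Part (2) comes from the observation that the criterion of Theorem~\ref{thm: description2} depends only on $A=p_{2,3}(D)$, with the $p_{1,3}$ case obtained by passing to $D^{\mathrm{op}}$ and $D'^{\mathrm{op}}$ and applying (1). You additionally check that $D^{\mathrm{op}}$ is covering, which the paper leaves implicit.
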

\begin{proof}
(1) If $D=E\ast F$ with $E\le GHT$ and $F\le HGT$, then $D^{\mathrm{op}}=F^{\mathrm{op}}\ast
E^{\mathrm{op}}$ over the same group $H$. (2) The criterion of Theorem~\ref{thm: description2}
depends only on $A=p_{2,3}(D)$ and its Goursat data, not on $\alpha$; this proves the first case.
The second case follows by applying the first to $D^{\mathrm{op}}$ and $D'^{\mathrm{op}}$, whose
common second invariant is $p_{1,3}(D)$, using~(1).
\end{proof}

\smallskip

As an application, we consider the case $K = \{1\}$.  Recall that the \emph{socle} $\Soc(G)$ of $G$ is the product of all minimal normal subgroups of $G$.
\begin{coro}\label{cor:socle}
Assume the conventions of Section \ref{section3 conventions}. Suppose $K = \{1\}$.  Then $D$ is not $\ast$-factorable if and only if
$\Soc(G) \leq N$.
\end{coro}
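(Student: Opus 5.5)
We apply Theorem~\ref{thm: description2} with $K=1$. In this case $|A|=|G|$, and the map $A\to G$, $(g,t)\mapsto g$, is an isomorphism, since its kernel is $1\times K=1$. Under this isomorphism the subgroup $N\times 1$ of $A$ corresponds to $N\unlhd G$. A normal subgroup $\widetilde M\unlhd A$ corresponds to a normal subgroup $M\unlhd G$, with $|\widetilde M|=|M|$. The condition $\widetilde M\cap(N\times 1)=1$ becomes $M\cap N=1$, because $(n,1)\in\widetilde M$ exactly when $n\in M\cap N$. The condition $|\widetilde M|>|K|=1$ becomes $M\neq 1$. Theorem~\ref{thm: description2} therefore says that $D$ is $\ast$-factorable if and only if there is a nontrivial normal subgroup $M\unlhd G$ with $M\cap N=1$.

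It remains to show the following: there is no nontrivial $M\unlhd G$ with $M\cap N=1$ if and only if $\Soc(G)\le N$.

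Suppose $\Soc(G)\le N$ and let $1\neq M\unlhd G$. Then $M$ contains a minimal normal subgroup $M_0$ of $G$. Since $M_0\le\Soc(G)\le N$, we get $1\neq M_0\le M\cap N$, so no such $M$ exists.

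Conversely, suppose $\Soc(G)\not\le N$. Then some minimal normal subgroup $M_0$ of $G$ is not contained in $N$. The intersection $M_0\cap N$ is normal in $G$ and properly contained in $M_0$, so minimality forces $M_0\cap N=1$. Thus $M=M_0$ is a nontrivial normal subgroup with $M\cap N=1$.

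Combining the two directions, $D$ is not $\ast$-factorable if and only if $\Soc(G)\le N$. The only point needing care is the translation of normal subgroups of $A$ into normal subgroups of $G$. This is exactly where $K=1$ is used, since it makes the projection $A\to G$ an isomorphism that carries $N\times 1$ onto $N$.
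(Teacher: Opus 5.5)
Your proof is correct and takes the same approach as the paper. You specialize Theorem~\ref{thm: description2} via the isomorphism $A\cong G$ (valid because $K=1$), then reduce to the existence of a minimal normal subgroup of $G$ meeting $N$ trivially. You also make explicit two points the paper's brief proof leaves implicit: the nontriviality condition $|\widetilde M|>|K|=1$, and the minimality argument giving $M_0\cap N=1$.
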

\begin{proof}
When $K = \{1\}$, the group $A$ is isomorphic to $G$ and $N\times 1$
corresponds to $N$.  A normal subgroup $\widetilde{M}\le G$ with $\widetilde{M} \cap N = \{1\}$ exists if and only if
some minimal normal subgroup of $G$ meets $N$ trivially, that is, $\Soc(G) \not\leq N$.
\end{proof}

We now give a reformulation of the above criterion when the covering subgroup $D\le GGT$ satisfies the condition that $K:=k_2(A)$ is central in $T_0:= p_2(A)$. In this case, there is a well-defined action of $G$ on $T_0$ by conjugation via $A$. Indeed for $g \in G$, choose any $(g, t_g) \in A$ and set $g \cdot x := t_g x t_g^{-1}$ for any $x\in T_0$.  Since two lifts of $g$ differ by an element of $K \leq Z(T_0)$, the action is well-defined.

Let $M \unlhd T_0$. A homomorphism $\theta\colon M \to G$ is called \emph{$G$-equivariant} if $\theta(t_g t t_g^{-1}) = g \theta(t)
g^{-1}$ for all $g \in G$ and $t \in M$ and for some (equivalently any) lift $t_g\in T_0$.

\begin{theorem}\label{thm: description3}
Assume the conventions of Section \ref{section3 conventions}. Additionally suppose $K\le Z(T_0)$. Then $D$ is $\ast$-factorable if and only if there exists a pair $(M, \theta)$ where $M \trianglelefteq T_0$ with $|M| > |K|$ and
$\theta\colon M \to G$ is a $G$-equivariant group homomorphism with          $\varphi(\theta(t)N) = tK$ for all $t \in M$.
\end{theorem}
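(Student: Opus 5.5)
The plan is to reduce the statement directly to Theorem~\ref{thm: description2}. I will show that, when $K\le Z(T_0)$, the normal subgroups $\widetilde M\trianglelefteq A$ with $\widetilde M\cap(N\times 1)=1$ correspond bijectively to the pairs $(M,\theta)$ of the statement, via the graph construction $\widetilde M=\{(\theta(t),t)\mid t\in M\}$, and that this correspondence satisfies $|\widetilde M|=|M|$. The one fact about Goursat data I will use repeatedly is that, for $g\in G$ and $t\in T_0$, we have $(g,t)\in A$ if and only if $\varphi(gN)=tK$. The hypothesis $K\le Z(T_0)$ enters only to make the $G$-action on $T_0$, and hence the word ``$G$-equivariant'', well defined. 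In every computation below I will conjugate by an actual element $(g,s)\in A$, so $s$ is a legitimate lift $t_g$ of $g$.

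\emph{From $\widetilde M$ to $(M,\theta)$.} Suppose $\widetilde M\trianglelefteq A$ satisfies $\widetilde M\cap(N\times 1)=1$ and $|\widetilde M|>|K|$.
\begin{enumerate}[leftmargin=2em]
\item The second projection restricted to $\widetilde M$ has kernel $\widetilde M\cap(G\times 1)$. Since $\widetilde M\le A$, this kernel lies in $N\times 1$, so it is trivial.
\item Hence $p_2$ is injective on $\widetilde M$. Put $M:=p_2(\widetilde M)\le T_0$; then $|M|=|\widetilde M|>|K|$.
\item Define $\theta\colon M\to G$ by $\theta(t)=g$ whenever $(g,t)\in\widetilde M$. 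This is a well-defined homomorphism, and $\widetilde M$ is its graph.
\item For $(g,s)\in A$ and $t\in M$, conjugation gives ${}^{(g,s)}(\theta(t),t)=(g\theta(t)g^{-1},\,sts^{-1})\in\widetilde M$. Every element of $T_0$ occurs as such an $s$, so $sts^{-1}\in M$ and $M\trianglelefteq T_0$. The same computation gives $\theta(sts^{-1})=g\theta(t)g^{-1}$, which is exactly $G$-equivariance.
\item Since $(\theta(t),t)\in A$, the Goursat description gives $\varphi(\theta(t)N)=tK$.
\end{enumerate}

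\emph{From $(M,\theta)$ to $\widetilde M$.} Conversely, let $(M,\theta)$ be as in the statement, and let $\widetilde M$ be the graph of $\theta$.
\begin{enumerate}[leftmargin=2em]
\item $\widetilde M$ is a subgroup of $G\times T$ because $\theta$ is a homomorphism, and $|\widetilde M|=|M|>|K|$.
\item The condition $\varphi(\theta(t)N)=tK$ forces $(\theta(t),t)\in A$, so $\widetilde M\le A$.
\item If $(n,1)\in\widetilde M$, then $n=\theta(1)=1$. Thus $\widetilde M\cap(N\times1)=1$.
\item For normality, take $(g,s)\in A$. Then ${}^{(g,s)}(\theta(t),t)=(g\theta(t)g^{-1},sts^{-1})$. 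Here $sts^{-1}\in M$ because $M\trianglelefteq T_0$, and $g\theta(t)g^{-1}=\theta(sts^{-1})$ by equivariance, since $s$ is a lift of $g$. So $\widetilde M\trianglelefteq A$.
\end{enumerate}
Theorem~\ref{thm: description2} then finishes both directions.

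I do not expect a serious obstacle; the argument is bookkeeping. The only point that needs care is normality of $M$ in $T_0$ and of $\widetilde M$ in $A$. There one must note that conjugation by $A$ realizes conjugation by every element of $T_0$, because $p_2(A)=T_0$. One must also note that the lift $s$ appearing in the conjugation is an admissible choice of $t_g$, which the centrality assumption guarantees makes no difference.
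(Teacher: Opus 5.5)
Your proposal is correct and follows essentially the same route as the paper. Both directions go through Theorem~\ref{thm: description2} via the graph correspondence $\widetilde M=\{(\theta(t),t)\mid t\in M\}$, and normality and $G$-equivariance come from conjugating by elements $(g,s)\in A$. You are slightly more explicit than the paper in noting that $\ker(p_2|_{\widetilde M})=\widetilde M\cap(G\times 1)\le N\times 1$ and in checking $|\widetilde M|=|M|>|K|$ in the converse direction.
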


\begin{proof} Assume \(D\) is $\ast$-factorable. By Theorem~\ref{thm: description2}, there exists a normal subgroup $\widetilde{M}\unlhd A$ with $\widetilde{M} \cap (N\times 1) = 1$. Set $M := p_2(\tilde M)$. The restriction $p_2|_{\tilde M}\colon \tilde M \to M$ is an isomorphism. Indeed it is surjective by definition, and its kernel is $\tilde M \cap (N\times 1) = 1$. In particular, we get $|M|>|K|$. Since
$\tilde M \trianglelefteq A$ and $p_{2}\colon A \to T_0$ is surjective, we have $M \trianglelefteq T_0$. Since $p_{2}|_{\tilde M}$ is an isomorphism, every $t \in M$ has a unique preimage $(\theta(t), t) \in \tilde M$. This induces a group homomorphism $\theta\colon M\to G$. The condition $\varphi(\theta(t)N) = tK$ holds since $(\theta(t),t) \in \tilde M \leq A$.  For $G$-equivariance,
conjugate $(\theta(t),t) \in \tilde M \trianglelefteq A$ by $(g,t_g) \in A$ to get $(g\theta(t)g^{-1}, t_g t t_g^{-1}) \in \tilde M$. By uniqueness of the preimage this gives $\theta(t_g t t_g^{-1}) = g\theta(t) g^{-1}$. 

\smallskip\noindent Conversely, given $(M, \theta)$ satisfying the conditions, set $\widetilde M := \{(\theta(t),t) : t \in M\}$. Then $\widetilde M \leq A$, and $\widetilde M \trianglelefteq A$ since for $(g',t') \in A$ and $(\theta(t),t) \in \widetilde M$, by the $G$-equivariance, we get $(g'\theta(t)g'^{-1}, t'tt'^{-1}) = (\theta(t'tt'^{-1}),t'tt'^{-1}) \in \widetilde M$.  Moreover $\widetilde M \cap (N\times 1) = 1$. Indeed if $(\theta(t),t) \in N\times 1$ then $t = 1$, so $\theta(t) = 1$.
\end{proof}

\smallskip 

Now we specialize to the case where $T$ is abelian. Assume the above notation. For any $T_0 \leq T$, the conjugation action of $G$ on $T_0$ via $A$ is trivial.  Hence $G$-equivariance of $\theta\colon M\to G$ reduces to requiring $\im(\theta) \leq Z(G)$.  Define $M_0 \leq T_0$ to be the unique subgroup with $K \leq M_0$ and $M_0/K = \varphi(Z(G)N/N)$.

Note that $M_0 = K$ if and only if $Z(G) \leq N$, and that any
valid $\theta\colon M'\to G$ in Theorem~\ref{thm: description3} must satisfy
$M' \leq M_0$ (since $\theta(m) \in Z(G)$ forces
$mK = \varphi(\theta(m)N) \in \varphi(Z(G)N/N)$, that is, $m \in M_0$).
Hence the search for a valid pair $(M', \theta)$ is reduced to subgroups of $M_0$.

\begin{coro}\label{cor:abelian_nonvanish}
Assume the notation from Section \ref{section3 conventions} and suppose $T$ is abelian. 
\begin{enumerate}[leftmargin=2em]
    \item If $Z(G) \leq N$, then $D$ is not $\ast$-factorable.
    \item If $Z(G)\not\le N$ (equivalently $M_0\supsetneq K$), then $D$ is $\ast$-factorable if and only if there exist a subgroup $M'\le M_0$ with $|M'|>|K|$ and a group homomorphism $\theta\colon M'\to Z(G)$ satisfying $\varphi(\theta(m)N)=mK$ for all $m\in M'$.
\end{enumerate}
\end{coro}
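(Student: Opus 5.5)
The corollary follows from Theorem~\ref{thm: description3}, which applies because $T$ abelian forces $K\le Z(T_0)$. The first task is to make $G$-equivariance explicit. Since $T$ is abelian, any lift $t_g$ with $(g,t_g)\in A$ acts trivially on $T_0$ by conjugation. So for a homomorphism $\theta\colon M'\to G$ with $M'\le T_0$, equivariance reads $\theta(t)=g\theta(t)g^{-1}$ for all $g\in G$ and $t\in M'$, that is, $\im(\theta)\le Z(G)$. Normality $M'\trianglelefteq T_0$ is automatic since $T_0$ is abelian. Hence Theorem~\ref{thm: description3} says: $D$ is $\ast$-factorable if and only if there is a subgroup $M'\le T_0$ with $|M'|>|K|$ and a homomorphism $\theta\colon M'\to Z(G)$ such that $\varphi(\theta(m)N)=mK$ for all $m\in M'$.

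Next I would justify the reduction to subgroups of $M_0$ already noted before the corollary. If $\theta(m)\in Z(G)$, then $mK=\varphi(\theta(m)N)\in\varphi(Z(G)N/N)=M_0/K$, so $m\in M_0$. Thus any valid $M'$ lies in $M_0$, which is exactly part (2). I would also check the stated equivalence $Z(G)\not\le N\iff M_0\supsetneq K$: since $\varphi$ is an isomorphism, $M_0/K$ is trivial if and only if $Z(G)N/N$ is trivial, that is, $Z(G)\le N$.

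For part (1), assume $Z(G)\le N$. Then $M_0=K$, so any valid $M'$ satisfies $M'\le K$ and hence $|M'|\le|K|$. This contradicts $|M'|>|K|$, so no valid pair exists and $D$ is not $\ast$-factorable.

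No step should be a real obstacle. The only point needing care is the well-definedness of the conjugation action and the translation of equivariance into $\im(\theta)\le Z(G)$. Both are immediate once one observes that conjugation by elements of the abelian group $T$ on $T_0$ is trivial.
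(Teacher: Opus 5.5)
Your proposal is correct and follows the paper's proof essentially step for step: you apply Theorem~\ref{thm: description3}, turn $G$-equivariance into $\im(\theta)\le Z(G)$ using that $T$ is abelian, and restrict to subgroups of $M_0$. The only cosmetic difference is in part (1): you first deduce $M_0=K$ and then $M'\le K$, whereas the paper argues directly that $\theta(m)\in Z(G)\le N$ forces $mK=K$, which is the same computation.
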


\begin{proof}
Theorem \ref{thm: description3} is applicable since $T_0$ is abelian, hence $D$ is $\ast$-factorable if and only if there exists $M\unlhd T_0$ with $|M| > |K|$ and a $G$-equivariant $\theta: M\to G$ with $\varphi(\theta(t)N) = tK$ for all $t\in M$. Since $T$ is abelian, every subgroup is normal, and the conjugation action of $G$ on $T_0$ is trivial, and hence the $G$-equivariance of $\theta$ is equivalent to $\im(\theta)\le Z(G)$. By the above observation, any such subgroup satisfies $M\le M_0$. Hence we get (2).

For (1), suppose $Z(G)\le N$ and let $(M', \theta')$ be such a pair. Then $\theta(m) \in Z(G)\le N$ implies $\varphi(\theta(m)N) = K$, so $mK = K$ for all $m\in M'$, that is, $M'\le K$, contradicting $|M'|>|K|$. Hence no such pair exists and $D$ is not $\ast$-factorable.
\end{proof}


\section{The Essential Algebra and its Idempotents}


Let $G$ and $T$ be fixed finite groups. After giving the formal definition of the essential algebra, we use the above criteria to extend Romero's coprime case description of the essential algebra. Then concentrate on the case where $T$ is abelian. Using Theorem~\ref{thm: description3} we give necessary and sufficient conditions for an image of a basis element of \(kB_T(G\times G)\) vanish in the essential algebra. We begin by recalling the definition of the essential algebra and then proceed to construct idempotents in this algebra and describe the structure of the algebra. 

We note that idempotents defined in this section are inspired by similar elements in the fibered Burnside algebras defined in \cite{BC} and they satisfy analogous properties. 

\begin{defi}
The \emph{essential algebra} $\widehat{kB_T}(G)$ of $kB_T$ at $G$ is 
\[
\widehat{kB_T}(G) := kB_T(G \times G) \Big/ \calI_T(G),
\]
where $\calI_T(G)$ denotes the ideal generated by all elements of the form
$\alpha \times_K^d \beta$ with $\alpha \in kB_T(G \times K)$, $\beta \in kB_T(K \times G)$, and $K$ ranging over all groups whose order is strictly less than $|G|$.
\end{defi}

\begin{nothing}\label{not: romero-generalT}
We will use the following facts from \cite{R1}, valid for an arbitrary finite group $T$ and an arbitrary commutative ring $k$. If $D\le GGT$ is not a covering subgroup, then $\bigl[\bigl[\tfrac{GGT}{D}\bigr]\bigr]=0$ in $\widehat{kB_T}(G)$ \cite{R1}. If a linear combination of pairwise distinct basis elements of $kB_T(G\times G)$ lies in $\calI_T(G)$, then every subgroup appearing with nonzero coefficient is $\ast$-factorable \cite[Lemma 3.2]{R1}; in particular, $\bigl[\bigl[\tfrac{GGT}{D}\bigr]\bigr]=0$ implies that $D$ is $\ast$-factorable, and the classes $\bigl[\bigl[\tfrac{GGT}{D}\bigr]\bigr]$, where $D$ runs over a set of representatives of the conjugacy classes of covering subgroups that are not $\ast$-factorable, are linearly independent in $\widehat{kB_T}(G)$ \cite{R1}. We also use the subgroups $D_{\sigma,\alpha,X}$ and the $k$-algebra $kB^{Z(G)}(T)\rtimes\Out(G)$ of \cite[Section 3]{R1}.
\end{nothing}

\begin{theorem}\label{thm: nosection}
Let $G$ and $T$ be finite groups such that no nontrivial quotient of $G$ is isomorphic to a section of $T$. The following statements hold.
\begin{enumerate}[leftmargin=2em]
  \item Every covering subgroup $D\le GGT$ with associated pair $(A,\alpha)$ and Goursat quintuple $(G,N,\varphi,K,T_0)$ satisfies $N=G$, that is, $A=G\times T_0$ and $D=D_{\sigma,\beta,T_0}$ for an automorphism $\sigma$ of $G$ and a homomorphism $\beta\colon T_0\to Z(G)$.
  \item No covering subgroup of $GGT$ is $\ast$-factorable.
  \item The classes $\big[\big[\tfrac{GGT}{D}\big]\big]$, where $D$ runs over a set of representatives of the conjugacy classes of covering subgroups, form a $k$-basis of $\widehat{kB_T}(G)$, and there is an isomorphism of $k$-algebras
        \[
        \widehat{kB_T}(G)\;\cong\;kB^{Z(G)}(T)\rtimes\Out(G).
        \]
\end{enumerate}
In particular, this holds whenever $\gcd(|G|,|T|)=1$, recovering \cite[Corollary 3.12]{R1}.
\end{theorem}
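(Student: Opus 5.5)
For part (1), I would use the Goursat data of $A=p_{2,3}(D)$. The isomorphism $\varphi\colon G/N\to T_0/K$ exhibits the quotient $G/N$ of $G$ as the section $T_0/K$ of $T$. By hypothesis this quotient must be trivial, so $N=G$. Then $K=T_0$, and $A=\{(g,t)\mid g\in G,\ t\in T_0\}=G\times T_0$.

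Next I analyze the surjection $\alpha\colon G\times T_0\to G$, whose kernel satisfies $\ker(\alpha)\cap(G\times 1)=1$. Write $\sigma:=\alpha|_{G\times 1}$ and $\beta:=\alpha|_{1\times T_0}$. Since $\ker\alpha$ meets $G\times 1$ trivially, $\sigma$ is injective, hence an automorphism of $G$. Because $G\times 1$ and $1\times T_0$ commute elementwise, the images $\sigma(G)=G$ and $\beta(T_0)$ commute, which gives $\beta(T_0)\le Z(G)$. Thus $\alpha(g,t)=\sigma(g)\beta(t)$, and $D$ has exactly the form of Romero's $D_{\sigma,\beta,T_0}$. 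I would check this against the definition in \cite[Section 3]{R1}, possibly up to replacing $\beta$ by $\sigma^{-1}\circ\beta$ or adjusting the convention.

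For part (2), apply Theorem~\ref{thm: description2} with $N=G$ and $K=T_0$, so $A=G\times T_0$. Suppose $\widetilde M\trianglelefteq A$ satisfies $\widetilde M\cap(G\times 1)=1$. Then $p_2|_{\widetilde M}$ is injective, so $|\widetilde M|\le|T_0|=|K|$. Hence no $\widetilde M$ with $|\widetilde M|>|K|$ exists, and $D$ is not $\ast$-factorable.

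For part (3), combine the facts in \ref{not: romero-generalT}. Non-covering classes vanish, and classes of non-$\ast$-factorable covering subgroups (one per conjugacy class) are linearly independent. By (2) these are all the covering subgroups, so they form a basis. Linear independence is what needs (2): a relation would force some coefficient's subgroup to be $\ast$-factorable.

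The algebra isomorphism is the step I expect to be the main obstacle. Romero's identification $\widehat{kB_T}(G)\cong kB^{Z(G)}(T)\rtimes\Out(G)$ in the coprime case should rest only on two facts: every covering subgroup is some $D_{\sigma,\beta,X}$ with $X\le T$, and none is factorable. I would verify that her multiplication computation uses nothing else. Concretely, Lemma~\ref{lem: productofcovering} and Lemma~\ref{Mackey} compute products of $D_{\sigma,\beta,X}$'s. The resulting star products are again of the form $D_{\sigma\sigma',\,\cdot,\,X\cap{}^{t}X'}$, and conjugation by inner automorphisms matches $\Out(G)$. This gives the same structure constants as in $kB^{Z(G)}(T)\rtimes\Out(G)$, so her map transports verbatim. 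Finally, when $\gcd(|G|,|T|)=1$, any nontrivial quotient of $G$ has order dividing $|G|$, while sections of $T$ have order dividing $|T|$. So such a quotient cannot be isomorphic to a section of $T$, the hypothesis holds, and we recover \cite[Corollary 3.12]{R1}.
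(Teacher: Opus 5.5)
Your proposal is correct and follows the paper's proof essentially step for step. The Goursat data force $N=G$ and $A=G\times T_0$, with $\sigma=\alpha(-,1)$ an automorphism and $\beta=\alpha(1,-)$ central. Injectivity of $p_2$ on $\widetilde M$ gives $|\widetilde M|\le|T_0|=|K|$ in Theorem~\ref{thm: description2}. The basis claim comes from the facts in \ref{not: romero-generalT}, and the gcd remark is the same.

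The only difference is the algebra isomorphism. You plan to re-verify Romero's structure constants; that recomputation would also work, since the Mackey terms of a product of two $D_{\sigma,\beta,X}$'s are again of this form. The paper instead uses the fact that \cite[Theorem 3.11]{R1} already gives, without any coprimality assumption, an injective $k$-algebra map $kB^{Z(G)}(T)\rtimes\Out(G)\to\widehat{kB_T}(G)$ whose image is the span of the classes of the $D_{\sigma,\beta,X}$. Part (1) together with the spanning argument then gives surjectivity directly, so your recomputation is unnecessary.
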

\begin{proof}
(1) By Goursat's lemma, $\varphi\colon G/N\xrightarrow{\ \sim\ }T_0/K$, and $T_0/K$ is a section of $T$. Then the hypothesis implies $G/N=1$, so $N=G$, $G\times 1\le A$, and $A=G\times T_0$ with $K=T_0$. Writing $\alpha(g,t)=\alpha(g,1)\alpha(1,t)$, the map $\sigma:=\alpha(-,1)$ is an automorphism of $G$ (its kernel is $\ker\alpha\cap(G\times 1)=1$) and $\beta:=\alpha(1,-)$ has central image, since $\beta(t)\sigma(g)=\sigma(g)\beta(t)$ for all $g\in G$, $t\in T_0$. Hence $D=D_{\sigma,\beta,T_0}$ in the notation of \cite[Section 3]{R1}.

(2) If $\widetilde M\trianglelefteq A$ satisfies $\widetilde M\cap(G\times 1)=1$, then $p_2$ embeds $\widetilde M$ into $T_0$, so $|\widetilde M|\le|T_0|=|K|$; by Theorem~\ref{thm: description2}, $D$ is not $\ast$-factorable.

(3) By (1) and (2), every covering subgroup is of the form $D_{\sigma,\beta,T_0}$ and none is $\ast$-factorable. By \ref{not: romero-generalT}, the non-covering classes vanish, so the covering classes span $\widehat{kB_T}(G)$, and they are linearly independent. Hence they form a $k$-basis. The injective $k$-algebra morphism $kB^{Z(G)}(T)\rtimes\Out(G)\to\widehat{kB_T}(G)$ of \cite[Theorem 3.11]{R1} has image the $k$-span of these classes, which is all of $\widehat{kB_T}(G)$. Hence it is an isomorphism.

For the last statement, if $\gcd(|G|,|T|)=1$ then the order of a common nontrivial quotient-section would divide both $|G|$ and $|T|$.
\end{proof}

\begin{nothing}
\label{sec: abelian}
For the remainder of the paper, we restrict our attention to the case where $T$ is a fixed abelian group such that $|T|\in k^{\times}$.
\end{nothing}

\begin{lem}\label{lem: product}
Let \(H\) be a finite group, \(E \leq GHT\), and \(F \leq HKT\). Set \(A := p_{2,3}(E)\)
and \(B := p_{1,3}(F).\) If \(p_2(E)=H\) or \(p_1(F)=H\), then
$$
\Big[\frac{GHT}{E} \Big] \times_H^d \Big[\frac{HKT}{F} \Big]
\;\cong\;
|T:k_2(AB)| \, \Big[\frac{GKT}{E*F} \Big].
$$
\end{lem}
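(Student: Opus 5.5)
We start from Romero's Mackey formula (Lemma~\ref{Mackey}):
\[
\Big[\frac{GHT}{E}\Big]\times_H^d\Big[\frac{HKT}{F}\Big]=\sum_{(h,t)}\Big[\frac{GKT}{E*{}^{(h,1,t)}F}\Big],
\]
where $(h,t)$ runs over $p_{2,3}(E)\backslash (H\times T)/p_{1,3}(F)=A\backslash(H\times T)/B$. The plan has three steps. First, show that every summand equals $[GKT/(E*F)]$. Second, count the double cosets. Third, identify that count with $|T:k_2(AB)|$.

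\emph{Conjugates give the same star product.} Here $A=p_{2,3}(E)$ and $B=p_{1,3}(F)$, and ${}^{(h,1,t)}F$ has $p_{1,3}$ equal to ${}^{(h,t)}B$. Since $T$ is abelian, conjugating by $(h,1,t)$ changes only the $H$-coordinate up to conjugation by $h$; the $T$-coordinate is fixed. Assume first that $p_1(F)=H$. Pick $(h,s)\in B$, so some $f=(h,k,s)\in F$ exists. Then ${}^{(h,1,t)}F={}^{(h,1,t)}{}^{f^{-1}}F={}^{(1,k^{-1},ts^{-1})}F$. Conjugating by $(1,k^{-1},u)$ with $u\in T$ central gives ${}^{(1,k^{-1},1)}F$. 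From the definition, ${}^{(1,k^{-1},1)}$ commutes with the star product: $E*{}^{(1,k,1)}F={}^{(1,k,1)}(E*F)$. That subgroup is $GKT$-conjugate to $E*F$, so the transitive set is the same. The case $p_2(E)=H$ is symmetric: we absorb $h$ into $E$ instead. Since ${}^{(1,h,1)}E*{}^{(h,1,1)}F = E*F$ after simultaneous conjugation of the middle variable, we can move $h$ between the factors. This simultaneous-conjugation identity is the key small check. I will state it as ${}^{(g,h,t)}E*{}^{(h,k,t)}F={}^{(g,k,t)}(E*F)$ and verify it directly from the definition.

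\emph{Counting the double cosets.} Under either hypothesis, $p_1(A)=H$ or $p_1(B)=H$. Hence $A(1\times T)=H\times T$ or $(1\times T)B=H\times T$. Every double coset therefore has a representative $(1,t)$. Because $T$ is central in $H\times T$, the element $(1,t)$ commutes with everything. So $A(1,t)B=A(1,t')B$ if and only if $(1,t^{-1}t')\in AB$, where $AB$ is a subgroup since, by Lemma~\ref{lem: Anormal}, whichever of $A,B$ has full first projection is normal. Thus the double cosets correspond to $T/k_2(AB)$, where $k_2(AB)=\{t:(1,t)\in AB\}$. This gives $|T:k_2(AB)|$ summands, all isomorphic to $GKT/(E*F)$.

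\emph{Main obstacle.} The delicate step is the first one: justifying that the shifted conjugate ${}^{(h,1,t)}F$ gives a star product conjugate to $E*F$. This needs both the centrality of $T$ and the full-projection hypothesis to absorb the $H$-part. If the direct conjugation argument becomes messy, the fallback is to work with the double coset representatives $(1,t)$ directly. In that case ${}^{(1,1,t)}F=F$ because $T$ is abelian, so every summand is literally $E*F$, and the first step is avoided entirely.
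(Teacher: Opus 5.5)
Your proposal is correct and follows essentially the paper's route: Mackey formula, Lemma~\ref{lem: Anormal} to make $AB$ a subgroup, representatives $(1,t)$ via the full first projection, the bijection with $T/k_2(AB)$, and ${}^{(1,1,t)}F=F$ because $T$ is abelian, so each summand is literally $[GKT/(E\ast F)]$. Your first step (every representative $(h,t)$ gives a $GKT$-conjugate of $E\ast F$) is also correct, but it is unnecessary, since your counting step already selects representatives of the form $(1,t)$, and that is exactly what the paper does.
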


\begin{proof}
By Lemma~\ref{Mackey}, the above product decomposes as a disjoint union indexed by a set of representatives of the double cosets $A \backslash (H \times T) / B$. If
\(p_1(A) = p_2(E)=H\) or \(p_1(B) = p_1(F)=H\), by Lemma~\ref{lem: Anormal}, either \(A\) or \(B\) is normal in \(H\times T\). Hence \(AB\) is a subgroup of \(H\times T\), and we have
$$
A \backslash (H\times T)/B = (H\times T)/(AB).
$$

Moreover every double coset in \((H\times T)/(AB)\) admits a representative of the form \((1,t)\) for some \(t\in T\) since $p_1(AB) = H$ under either hypothesis. Given $t, t'\in T$ the equality \((1,t)AB=(1,t')AB\) holds if and only if \( (1,t^{-1}t')\in AB,\) which is equivalent to \( t^{-1}t'\in k_2(AB).\) Therefore we obtain a bijection $T/k_2(AB) \to (H\times T)/AB.$
Representatives may be taken in the form $(1,t)$, and since $T$ is abelian, conjugation by $(1,1,t)$ fixes $F$; hence each star product $E\ast^{(1,1,t)}F$ equals $E\ast F$. The product is therefore a disjoint union of $|T:k_2(AB)|$ copies of $[GKT/E\ast F]$.
\end{proof}

\begin{notation}
Let $D$ be a subgroup of $GGT$. We denote by
\[
\Big[\Big[\frac{GGT}{D}\Big]\Big]
\]
the image of the transitive $GGT$-set $\Big[\frac{GGT}{D}\Big]$ in the essential algebra $\widehat{kB_T}(G)$ of $G$.

Since the set of transitive $GGT$-sets forms a $k$-basis of $kB_T(G \times G)$, their images form a generating set of the essential algebra.
\end{notation}

\begin{prop}
\label{zeros}
Let $D\le GGT$.  If $D$ is $\ast$-factorable, then
$\Big[\Big[\frac{GGT}{D}\Big]\Big]$
is equal to zero in $\widehat{kB_T}(G)$.
\end{prop}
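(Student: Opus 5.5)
If $D = E\ast F$ with $E\le GHT$, $F\le HGT$ and $|H|<|G|$, the goal is to show that $\bigl[\tfrac{GGT}{E}\bigr]\times^d_H\bigl[\tfrac{HGT}{F}\bigr]$ equals an invertible scalar times $\bigl[\tfrac{GGT}{D}\bigr]$. The left side lies in $\calI_T(G)$ by definition, so the class of $D$ would vanish in $\widehat{kB_T}(G)$.

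First I reduce to a normalized factorization. By \cite[Lemma 3.4]{R1}, already used in the proof of Theorem~\ref{thm: description2}, I may replace $F$ (and correspondingly $E$, still over the same $H$) so that $D=E\ast F$ with $p_1(F)=H$ and $k_1(F)=1$. If this normalization is unavailable in that form, I can instead use the explicit factorization from the converse half of Theorem~\ref{thm: description2} when $D$ is covering. There $L=A/\widetilde M$ and $F=\{(\psi(g,t),g,t)\}$, so $p_1(F)=L$ automatically. If $D$ is not covering, the vanishing is already recorded in \ref{not: romero-generalT}, so I may assume $D$ is covering.

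Next I apply Lemma~\ref{lem: product} with $K=G$. Since $p_1(F)=H$, its hypothesis holds, which gives
\[
\Big[\frac{GHT}{E}\Big]\times_H^d\Big[\frac{HGT}{F}\Big]\cong |T:k_2(AB)|\,\Big[\frac{GGT}{E\ast F}\Big]=|T:k_2(AB)|\,\Big[\frac{GGT}{D}\Big],
\]
where $A=p_{2,3}(E)$ and $B=p_{1,3}(F)$. This is where the standing assumption of \ref{sec: abelian} enters. The index $|T:k_2(AB)|$ divides $|T|$, and $|T|\in k^\times$, so the index is invertible in $k$.

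Finally, the left-hand side lies in $\calI_T(G)$, so multiplying by $|T:k_2(AB)|^{-1}$ shows $\bigl[\tfrac{GGT}{D}\bigr]\in\calI_T(G)$, as required.

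The only delicate point is the normalization $p_1(F)=H$, or $p_2(E)=H$, which is needed to invoke Lemma~\ref{lem: product}. I would handle it by citing \cite[Lemma 3.4]{R1} as in Theorem~\ref{thm: description2}, or by directly using the explicit $E,F$ constructed there.
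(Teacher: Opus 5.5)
Your proof is correct and is essentially the paper's argument: normalize the factorization via \cite[Lemma 3.4]{R1} so that Lemma~\ref{lem: product} applies, then divide by the index $|T:k_2(AB)|$, which is a unit in $k$ because it divides $|T|$. The differences are cosmetic. You normalize on the side of $F$ ($p_1(F)=H$) where the paper normalizes on the side of $E$ ($p_2(E_1)=H_1$). Also, Romero's lemma may replace $H$ by a group $H_1$ with $|H_1|\le|H|$ rather than keeping $H$ itself, which is harmless since $|H_1|<|G|$ still holds.
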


\begin{proof}
By \cite[Lemma 3.4]{R1}, if $D = E * F$, then there exists a group $H_1$ with $|H_1| \leq |H|$ and subgroups $E_1 \leq GH_1T$ and $F_1 \leq H_1GT$ such that $D = E_1 * F_1$ and, moreover, $p_2(E_1) = H_1$.\\
Applying Lemma~\ref{lem: product}, we obtain
\begin{eqnarray*}
\Big[\frac{GGT}{D}\Big] 
= \Big[\frac{GGT}{E * F}\Big] 
&=& \Big[\frac{GGT}{E_1 * F_1}\Big]\\
&=& \frac{1}{|T : k_2(AB)|} \Big[\frac{GH_1T}{E_1}\Big]\times_{H_1}^d \Big[\frac{H_1GT}{F_1}\Big],
\end{eqnarray*}
where $A=p_{2,3}(E_1)$ and $B=p_{1,3}(F_1)$. Since $|H_1| < |G|$, the above expression factors through a group of strictly smaller order than $G$, and hence its image in the essential algebra is zero.
\end{proof}

By~\cite[Lemma 3.2]{R1}, if \(\Big[\Big[\frac{GGT}{D}\Big]\Big] = 0\), then \(D\) is $\ast$-factorable. Combining it with Proposition \ref{zeros}, we conclude that, since $T$ is abelian, \(\Big[\Big[\frac{GGT}{D}\Big]\Big] = 0\), if and only if \(D\) is $\ast$-factorable. In particular, $\calI_T(G)$ coincides with the $k$-span of the basis elements $[GGT/D]$ with $D$ $\ast$-factorable. Therefore, classes $\big[\big[\tfrac{GGT}{D}\big]\big]$, where $D$ runs over a set of representatives of conjugacy classes of covering subgroups that are not $\ast$-factorable form a $k$-basis of $\widehat{kB_T}(G)$.

\begin{nothing}
Let $A \leq G\times T$, and set $K = k_2(A)$. We define a subgroup $E_A \leq GGT$ by
\[
E_A := \{(g,g,t) \mid (g,t)\in A\},
\]
and an element $e_A$ in the endomorphism ring $kB_T(G\times G)$ by
\[
e_A := \frac{1}{|T:K|}\Big[\frac{GGT}{E_A}\Big].
\]
We denote by $\widehat{e}_A$ its image in the essential algebra.

It may occur that $\widehat{e}_A=0$. For example, if $p_1(A) \neq G$, then $\widehat{e}_A=0$. We call a subgroup $A \leq G\times T$ \emph{reduced} if $\widehat{e}_A \neq 0$, and denote by $\calR_{G\times T}$ the set of all reduced subgroups of $G\times T$. By Lemma~\ref{lem: Anormal}, $A$ is a normal subgroup of $G\times T$, when $p_1(A)=G$. Hence the set $\calR_{G\times T}$ is a subset of normal subgroups of $G\times T$. Moreover, the set $\calR_{G\times T}$ has a partial order via inclusion, and we denote it by $(\calR_{G\times T},\leq)$. 

As an immediate consequence of Theorem~\ref{thm: description3} and Corollary \ref{cor:abelian_nonvanish}, we get the following criterion for vanishing of $\widehat{e}_A$.
\end{nothing}

\begin{coro}
\label{desTabel}
Let $G$ be a group and let $A \leq G\times T$, with corresponding Goursat quintuple $(G, N, \varphi, K, T_0)$. Then 

\smallskip 

\noindent 1. $\widehat{e}_A=0$ if and only if there exists a subgroup $M \leq T_0$ with $|K| < |M|$ and a group homomorphism $\theta : M \to Z(G)$ such that
\[
\varphi(\theta(m)N) = mK \quad \text{for all } m \in M.
\]

\smallskip 

\noindent 2. If $Z(G) \leq N$, then $\widehat{e}_A \neq 0$.

\end{coro}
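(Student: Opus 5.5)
The plan is to reduce both statements to the factorization criteria of Section 3. The bridge is the equivalence, established just above, between vanishing of a basis element and $\ast$-factorability when $T$ is abelian.

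First I record what $E_A$ is. For $A\le G\times T$ with $p_1(A)=G$, $E_A$ is the covering subgroup with associated pair $(A,\alpha)$, where $\alpha(g,t)=g$ is the first projection. The covering conditions from \ref{covering} hold directly. We have $p_1(E_A)=p_2(E_A)=p_1(A)=G$. Moreover $k_1(E_A)=k_2(E_A)=1$, because $(g,1,1)\in E_A$ forces $g=1$. The kernel condition also holds: $\ker(\alpha)\cap(N\times 1)=(1\times K)\cap(N\times 1)=1$.

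Since $e_A$ is a nonzero scalar multiple of $[GGT/E_A]$, we have $\widehat e_A=0$ if and only if $[[GGT/E_A]]=0$. The factor $1/|T:K|$ makes sense because $|T|\in k^\times$, and it is a unit. By the discussion following Proposition \ref{zeros}, which combines that proposition with \cite[Lemma 3.2]{R1}, this happens if and only if $E_A$ is $\ast$-factorable.

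The statement is implicitly about $A$ with $p_1(A)=G$, since the Goursat quintuple is written with first entry $G$. If one wanted to allow $p_1(A)\neq G$, then $E_A$ is not covering and $\widehat e_A=0$. I would read the corollary under the standing assumption $p_1(A)=G$ and state this explicitly.

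For part 1, I then apply Corollary \ref{cor:abelian_nonvanish}, which uses Theorem \ref{thm: description3}, to the covering subgroup $D=E_A$. Its Goursat data is exactly $(G,N,\varphi,K,T_0)$. In the case $Z(G)\not\le N$, part (2) of that corollary gives $\ast$-factorability if and only if there are $M'\le M_0$ with $|M'|>|K|$ and $\theta\colon M'\to Z(G)$ satisfying $\varphi(\theta(m)N)=mK$.

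The statement here asks only for $M\le T_0$, not $M\le M_0$. This is the one point needing care. By the observation preceding Corollary \ref{cor:abelian_nonvanish}, any $M$ admitting such a $\theta$ with values in $Z(G)$ lies in $M_0$, because $mK=\varphi(\theta(m)N)\in\varphi(Z(G)N/N)$. So the two conditions coincide.

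In the case $Z(G)\le N$, Corollary \ref{cor:abelian_nonvanish}(1) says $E_A$ is not $\ast$-factorable. Its proof also shows directly that no pair $(M,\theta)$ exists, so both sides of the equivalence are false. Alternatively, since $T$ is abelian, I can invoke Theorem \ref{thm: description3} directly, where $G$-equivariance means $\im\theta\le Z(G)$; this uniformly yields part 1 without splitting into cases.

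Part 2 then follows from part 1 together with Corollary \ref{cor:abelian_nonvanish}(1), or by repeating its two-line argument. If $Z(G)\le N$, then $\varphi(\theta(m)N)=K$, which forces $M\le K$ and contradicts $|M|>|K|$.

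I expect no real obstacle. The only points needing attention are the identification of $E_A$ as a covering subgroup with pair $(A,\mathrm{pr}_1)$, and the use of invertibility of $|T:K|$ to pass between $e_A$ and the basis element.
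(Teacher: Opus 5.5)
Your proposal is correct and follows the paper's route. The paper presents this corollary as an immediate consequence of Theorem \ref{thm: description3} and Corollary \ref{cor:abelian_nonvanish}, applied to the covering subgroup $E_A$ with pair $(A,\mathrm{pr}_1)$ via the equivalence ``$[[GGT/D]]=0$ iff $D$ is $\ast$-factorable'' stated after Proposition \ref{zeros}. Your remarks on the implicit hypothesis $p_1(A)=G$, on $|T:K|$ being a unit, and on the automatic containment $M\le M_0$ (which removes the need to split into cases) are exactly the details the paper leaves implicit.
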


We also claim that the set $\calR_{G\times T}$ of reduced subgroups is \emph{upward} closed in $(\calR_{G\times T},\le)$: a subgroup containing a reduced subgroup is itself reduced. The following lemma is an equivalent statement.

\begin{lem}
\label{lem: topdown}
Let $B\leq A\leq G\times T$. If $\widehat{e}_A=0$ then $\widehat{e}_B=0$.
\end{lem}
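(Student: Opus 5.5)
The plan is to avoid the factorization criteria entirely and show instead that $e_B$ is a left multiple of $e_A$ in $kB_T(G\times G)$: specifically, $e_A\, e_B = e_B$, which immediately gives $\widehat{e}_B=\widehat{e}_A\,\widehat{e}_B = 0$.

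\emph{Degenerate case.} If $p_1(A)\neq G$, then $p_1(B)\le p_1(A)$ is also a proper subgroup of $G$. In that case $E_B$ is not a covering subgroup, so $\widehat{e}_B=0$ by the facts recalled in \ref{not: romero-generalT}.

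\emph{Main case.} Assume $p_1(A)=G$. Then $p_2(E_A)=p_1(A)=G$, so Lemma~\ref{lem: product} applies to the product $[GGT/E_A]\times_G^d[GGT/E_B]$. We need three ingredients.

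First, the star product. An element $(g,g',t)$ lies in $E_A\ast E_B$ exactly when there is $h$ with $(g,h,t)\in E_A$ and $(h,g',t)\in E_B$. This forces $g=h=g'$, together with $(g,t)\in A$ and $(g,t)\in B$. Hence
\[
E_A\ast E_B=\{(g,g,t)\mid (g,t)\in A\cap B\}=E_B,
\]
using $B\le A$.

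Second, the two subgroups entering the lemma. These are $p_{2,3}(E_A)=A$ and $p_{1,3}(E_B)=B$, so their product is $AB=A$ and $k_2(AB)=K_A:=k_2(A)$.

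Third, the resulting coefficient. Lemma~\ref{lem: product} gives
\[
\Big[\frac{GGT}{E_A}\Big]\times_G^d\Big[\frac{GGT}{E_B}\Big]=|T:K_A|\,\Big[\frac{GGT}{E_B}\Big].
\]
Multiplying by the normalizing factors $\frac{1}{|T:K_A|}\cdot\frac{1}{|T:K_B|}$, where $K_B:=k_2(B)$, yields $e_A e_B=e_B$. This uses the standing assumption that $|T|$ is invertible in $k$.

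\emph{Conclusion.} Passing to the essential algebra gives $\widehat{e}_B=\widehat{e}_A\,\widehat{e}_B=0\cdot\widehat{e}_B=0$.

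The only point that needs care is checking that the hypotheses of Lemma~\ref{lem: product} hold and that the double-coset count is exactly $|T:K_A|$. Both follow from $B\le A$ and $p_1(A)=G$, so no real obstacle is expected.

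An alternative would be to transfer a normal subgroup $\widetilde M\trianglelefteq A$ from Theorem~\ref{thm: description2} to one in $B$. That route looks more delicate because of the order condition $|\widetilde M|>|K|$, so the idempotent-product argument above is preferred.
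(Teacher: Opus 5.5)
Your argument is correct, and it takes a different route from the paper.

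The paper goes through the factorization criterion. It uses Proposition~\ref{zeros} and Romero's Lemma 3.2 to turn $\widehat e_A=0$ into $\ast$-factorability of $E_A$. It then takes $\widetilde M\trianglelefteq A$ from Theorem~\ref{thm: description2} and checks that $\widetilde M\cap B\trianglelefteq B$ still satisfies both conditions. The order condition comes from
$|B|/|\widetilde M\cap B|=|B\widetilde M|/|\widetilde M|\le |A|/|\widetilde M|<|G|$,
so that step is less delicate than you expected.

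You instead stay inside the algebra. From $E_A\ast E_B=E_B$ and Lemma~\ref{lem: product} with $AB=A$ you get $e_Ae_B=e_B$ already in $kB_T(G\times G)$, so $e_B$ lies in the two-sided ideal generated by $e_A$. This is shorter, and it never needs the direction ``vanishing implies $\ast$-factorable'' (Romero's Lemma 3.2) or the criterion of Theorem~\ref{thm: description2}. It is essentially the computation of Lemma~\ref{ipots1} in the case $B\le A$. You do it at the level of $kB_T(G\times G)$, with no reducedness hypotheses and with $k_2(AB)=k_2(A)$ automatic, so there is no circularity.

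What the paper's route buys is a purely group-theoretic statement: $\ast$-factorability of $E_A$ passes to $E_B$. Its core needs neither $T$ abelian nor $|T|$ invertible. Your identity $E_A\ast E_B=E_B$ also gives this directly by associativity of the star product: if $E_A=E\ast F$ over $H$ with $|H|<|G|$, then $E_B=E\ast(F\ast E_B)$ over the same $H$.
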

\begin{proof}
If $p_1(A) \neq G$, then $p_1(B) \neq G$, and hence $\widehat{e}_B=0$, so the claim follows. Thus, we may assume that $p_1(A)=p_1(B)=G$. Let $(G,N,\varphi,K,T_0)$ and $(G,N',\varphi',K',T_0')$ be the Goursat quintuples corresponding to $A$ and $B$ respectively. By Proposition~\ref{zeros} and \cite[Lemma 3.2]{R1}, we know that $\widehat{e}_A = 0$ if and only if $E_A$ is $\ast$-factorable. By Theorem~\ref{thm: description2}, this is equivalent to the existence of normal subgroup $\widetilde{M}\unlhd A$ with $\widetilde{M}\cap (N\times 1) = 1$ and $|\widetilde{M}|>|K|$. Consider a normal subgroup $\widetilde{M}\cap B\unlhd B$. Since $N'\leq N$ we have $(\widetilde{M}\cap B)\cap (N'\times 1)\leq \widetilde{M}\cap (N\times 1)=1$. Moreover, 
\[\frac{|G||K'|}{|\widetilde{M}\cap B|}=\frac{|B|}{|\widetilde{M}\cap B|}=\frac{|B\widetilde{M}|}{|\widetilde{M}|} \leq \frac{|A|}{|\widetilde{M}|} = \frac{|G||K|}{|\widetilde{M}|}<|G|\]
and hence $|\widetilde{M}\cap B|>|K'|$. Thus $E_B$ is $\ast$-factorable, which implies $\widehat{e}_B=0$.
\end{proof}

Note that \(E_{G\times T}=\Delta(G)\times T\), and it is not $\ast$-factorable. Therefore, \(\widehat{e}_{G\times T}\) is a non-zero element of the essential algebra. Moreover, \(\widehat{e}_{G\times T}\) is the identity element of the essential algebra.
\begin{lem}
\label{ipots1}
Let \(G\) be a group and let \(A,B \in \calR_{G\times T}\). Then
$$
\widehat{e}_A\cdot \widehat{e}_B =
\begin{cases}
\widehat{e}_{A\cap B} & \text{if } A \cap B \in \calR_{G\times T},\\[6pt]
0 & \text{otherwise}.
\end{cases}
$$
\end{lem}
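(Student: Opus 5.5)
The plan is to compute the product $e_A\cdot e_B$ already in $kB_T(G\times G)$ using Lemma~\ref{lem: product}, and then pass to the essential algebra. Since $A,B$ are reduced, $p_1(A)=p_1(B)=G$, so $p_2(E_A)=G$ and the hypothesis of Lemma~\ref{lem: product} holds with $H=G$. The relevant subgroups there are $p_{2,3}(E_A)=A$ and $p_{1,3}(E_B)=B$. Hence
\[
\Big[\frac{GGT}{E_A}\Big]\times^d_G\Big[\frac{GGT}{E_B}\Big]\cong |T:k_2(AB)|\,\Big[\frac{GGT}{E_A\ast E_B}\Big].
\]

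The first step is to identify the star product. We have $(g,k,t)\in E_A\ast E_B$ if and only if there is $h$ with $(g,h,t)\in E_A$ and $(h,k,t)\in E_B$. This forces $g=h=k$, $(g,t)\in A$ and $(g,t)\in B$. So $E_A\ast E_B=E_{A\cap B}$, which is also the special case $\alpha=\beta=$ first projection of Lemma~\ref{lem: productofcovering}. Therefore
\[
e_Ae_B=\frac{|T:k_2(AB)|}{|T:k_2(A)|\,|T:k_2(B)|}\,\Big[\frac{GGT}{E_{A\cap B}}\Big]
=\frac{|T:k_2(AB)|\,|T:k_2(A\cap B)|}{|T:k_2(A)|\,|T:k_2(B)|}\,e_{A\cap B}.
\]
All these indices are invertible because $|T|\in k^\times$.

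The second step, which is the main point, is to show that the scalar equals $1$, i.e.
\[
|k_2(A)|\,|k_2(B)|=|k_2(AB)|\,|k_2(A\cap B)|.
\]
I would use the Goursat count $|X|=|G|\,|k_2(X)|$, valid for any $X\le G\times T$ with $p_1(X)=G$. This applies to $A$, $B$ and $AB$, and $AB$ is a subgroup because $A$ is normal by Lemma~\ref{lem: Anormal}. If $p_1(A\cap B)=G$, the same count applies to $A\cap B$, and then $|AB|\,|A\cap B|=|A|\,|B|$ gives the identity after dividing by $|G|^2$.

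If instead $p_1(A\cap B)\ne G$, the scalar may be off, but it does not matter. In that case $\widehat{e}_{A\cap B}=0$, since the image of $[GGT/E_{A\cap B}]$ vanishes by \ref{not: romero-generalT} ($E_{A\cap B}$ is not covering). Also $A\cap B$ is not reduced, so the formula gives $0$, as claimed.

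Finally, pass to the quotient. When $p_1(A\cap B)=G$, we get $\widehat{e}_A\widehat{e}_B=\widehat{e}_{A\cap B}$ exactly. By definition of $\calR_{G\times T}$, this is nonzero precisely when $A\cap B$ is reduced, and it is $0$ otherwise. This yields both cases of the statement.
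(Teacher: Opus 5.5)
Your proof is correct and follows the paper's argument: apply Lemma~\ref{lem: product}, identify $E_A\ast E_B=E_{A\cap B}$, and check that the resulting scalar is $1$. The one difference is how you get $|k_2(AB)|\,|k_2(A\cap B)|=|k_2(A)|\,|k_2(B)|$. You use the order count $|AB|\,|A\cap B|=|A|\,|B|$ together with $|X|=|G|\,|k_2(X)|$, whereas the paper proves $k_2(AB)=k_2(A)k_2(B)$ elementwise from $p_1(A\cap B)=G$. The two are equivalent, and your separate handling of the case $p_1(A\cap B)\neq G$ is slightly more careful than the paper's.
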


\begin{proof}
First observe that \((A,\mathrm{\pi_G})\) is the pair corresponding to the covering subgroup \(E_A\), where $\pi_G$ is the projection map from $A$ to $G$. Hence, by Lemma~\ref{lem: productofcovering},
\begin{eqnarray*}
E_A*E_B
&=&
\{(g,g,t)\mid (g,t)\in A \text{ and } (g,t)\in B\} \\
&=&
\{(g,g,t)\mid (g,t)\in A\cap B\} \\
&=&
E_{A\cap B}.
\end{eqnarray*}
Therefore, if \(A\cap B \notin \calR_{G\times T}\), then
\(
\widehat{e}_{A\cap B}=0
\)
and hence
\(
\widehat{e}_A\cdot \widehat{e}_B=0.
\)

Assume now that \(A\cap B \in \calR_{G\times T}\). We claim that $k_2(AB) = k_2(A)k_2(B)$. Indeed, it is clear that $k_2(AB) \supseteq k_2(A)k_2(B)$. For the converse, given $t\in k_2(AB)$, write $(1, t) = (a, s)(a^{-1}, u)$ for some $(a, s)\in A$ and $(a^{-1}, u)\in B$ with $t= su$. Since $A\cap B$ is reduced, we must have $p_1(A\cap B) = G$. Hence $(a, w)\in A\cap B$ for some $w\in T$. Then we get $sw^{-1}\in k_2(A)$ and $wu\in k_2(B)$. Thus $t = su = (sw^{-1})(wu)\in k_2(A)k_2(B)$.

Now by Lemma~\ref{lem: product}, the corresponding coefficient is
$$
\frac{|T : k_2(AB)|}{|T : k_2(A)|\,|T : k_2(B)|} = \frac{|T:k_2(A)k_2(B)|}{|T:k_2(A)||T:k_2(B)|} = 
\frac{1}{|T:k_2(A)\cap k_2(B)|} = \frac{1}{|T:k_2(A\cap B)|}.
$$
which is the coefficient that appear in \(\widehat{e}_{A\cap B}\), completing the proof.
\end{proof}

\begin{nothing}
Let \(G\) be a finite group and let \(D\) be a covering subgroup of \(GGT\). Denote by \(L\) and \(R\) the left and right invariants of \(D\), respectively, as defined in Section~\ref{sec: invariants}. Observe that \(E_R=D^{\mathrm{op}}*D\), and similarly \(E_L=D*D^{\mathrm{op}}\). These subgroups can be used to detect whether \(\Big[\Big[\frac{GGT}{D}\Big]\Big]\) vanishes in the essential algebra. More precisely, as we shall prove below, the elements \(\Big[\Big[\frac{GGT}{D}\Big]\Big]\), \(\widehat{e}_L\), and \(\widehat{e}_R\) either all vanish or are all non-zero.
\end{nothing}

\begin{lem}\label{lem: reduced-invariants}
Let $D\le GGT$ be a covering subgroup. Then

\smallskip

\noindent 1. 
\begin{equation*}
\label{eq: ipottimeselement}
\widehat{e}_{L_D}\cdot\Big[\Big[\frac{GGT}{D}\Big]\Big] =\,\Big[\Big[\frac{GGT}{D}\Big]\Big] =\Big[\Big[\frac{GGT}{D}\Big]\Big]\cdot\widehat{e}_{R_D},
\end{equation*}

\smallskip

\noindent 2. $\widehat e_{L_D} = 0$ if and only if $\widehat e_{R_D} = 0$ if and only if $\Big[\Big[\frac{GGT}{D}\Big]\Big] = 0$.
\end{lem}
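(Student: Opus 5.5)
For part 1, I will compute the relevant star products with Lemma~\ref{lem: productofcovering} and then get the coefficients from Lemma~\ref{lem: product}. Write $D$ with pair $(A,\alpha)$, so $R=R_D=A$. The subgroup $E_R$ has pair $(A,\pi_G)$. By Lemma~\ref{lem: productofcovering},
\[
D\ast E_R=\{(\alpha(g,t),g,t)\mid (g,t)\in A,\ (g,t)\in A\}=D .
\]
For the coefficient I apply Lemma~\ref{lem: product} to $D$ and $E_R$. This is allowed since $p_2(D)=G$. The relevant subgroups are $p_{2,3}(D)=A$ and $p_{1,3}(E_R)=A$, so $k_2(AA)=k_2(A)=K$. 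The product $[GGT/D]\times^d_G[GGT/E_R]$ is therefore $|T:K|\,[GGT/D]$. Dividing by $|T:K|$, the normalization in $e_R$, gives $[GGT/D]\cdot e_R=[GGT/D]$ already in $kB_T(G\times G)$, and hence in the quotient.

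The left equality is handled the same way. Put $L=p_{1,3}(D)=\{(\alpha(g,t),t)\mid (g,t)\in A\}$. Then $E_L\ast D=D$, since $(\alpha(g,t),t)\in L$ always holds. Lemma~\ref{lem: product} applies because $p_1(D)=G$, with the groups $p_{2,3}(E_L)=L$ and $p_{1,3}(D)=L$. The resulting coefficient $|T:k_2(L)|$ cancels the normalization of $e_L$. Alternatively, I can deduce the left equality from the right one applied to $D^{\mathrm{op}}$, using the anti-involution induced by $D\mapsto D^{\mathrm{op}}$ on $kB_T(G\times G)$. I would check that this anti-involution preserves $\calI_T(G)$, but the direct computation is simpler.

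For part 2, part 1 immediately gives that $\widehat e_{L}=0$ or $\widehat e_R=0$ forces $[[GGT/D]]=0$. For the converse I use Corollary~\ref{cor:invariance}(2). The covering subgroup $E_R$ has $p_{2,3}(E_R)=A=p_{2,3}(D)$, so $E_R$ is $\ast$-factorable exactly when $D$ is. In the same way $p_{1,3}(E_L)=L=p_{1,3}(D)$, so $E_L$ is $\ast$-factorable exactly when $D$ is.

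It remains to translate between vanishing and $\ast$-factorability. By Proposition~\ref{zeros} together with \cite[Lemma 3.2]{R1}, for any $D'\le GGT$ the class $[[GGT/D']]$ vanishes if and only if $D'$ is $\ast$-factorable. The scalar $1/|T:K|$ is a unit because $|T|\in k^\times$, so $\widehat e_R=0$ if and only if $[[GGT/E_R]]=0$, and likewise for $\widehat e_L$. Chaining these equivalences gives all three equivalences in part 2. The step that needs the most care is this identification in part 2, namely that vanishing is equivalent to $\ast$-factorability, including the unit scalar. Part 1 is routine bookkeeping, where the only subtlety is checking that $k_2$ of the relevant product equals $k_2(A)$ (respectively $k_2(L)$).
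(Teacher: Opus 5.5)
Your proof is correct. Part 1 is the same computation as in the paper: you verify the right-hand identity directly via $D\ast E_{R_D}=D$ and the coefficient $|T:k_2(R_D)|$ from Lemma~\ref{lem: product}. The paper does the left-hand one and calls the other symmetric, which makes no difference.

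Part 2 takes a genuinely different route. Beyond part 1, the paper needs only the implication $[[GGT/D]]=0\Rightarrow \widehat e_{L_D}=\widehat e_{R_D}=0$. It proves this by transporting an explicit factorization. If $D=E\ast F$ over $H$ with $|H|<|G|$, the covering property of $D$ gives $E_{L_D}=D\ast D^{\mathrm{op}}=E\ast(F\ast D^{\mathrm{op}})$ and $E_{R_D}=D^{\mathrm{op}}\ast D=(D^{\mathrm{op}}\ast E)\ast F$. Both then factor over the same $H$, and only associativity of $\ast$ is used.

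You instead invoke Corollary~\ref{cor:invariance}(2), that is, the structural criterion of Theorem~\ref{thm: description2}, by which $\ast$-factorability of a covering subgroup depends only on $p_{2,3}$ (equivalently $p_{1,3}$). Since $E_{R_D}$ shares $p_{2,3}$ with $D$ and $E_{L_D}$ shares $p_{1,3}$ with $D$, you get both directions of each equivalence at once. Consequently your appeal to part 1 inside part 2 is actually redundant.

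The trade-off is as follows. The paper's argument is more elementary and independent of the Goursat-type criterion of Section 3. Yours is shorter given Section 3, and it makes explicit that vanishing of $[[GGT/D]]$ is a property of either invariant alone. In both arguments, the passage between vanishing and $\ast$-factorability rests on Proposition~\ref{zeros} (hence on $T$ being abelian) together with \cite[Lemma 3.2]{R1}. You also correctly note that the normalizing index $|T:k_2(\cdot)|$ is a unit.
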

\begin{proof} 1. Since $p_1(D)=G$, Lemma~\ref{lem: product} applies to $E_{L_D}$ and $D$. One checks
$E_{L_D}\ast D = D$, indeed, for $(g_1,g_2,t)\in D$ take $h=g_1$, so $(g_1,h,t)=(g_1,g_1,t)\in E_{L_D}$
(as $(g_1,t)\in L_D$) and $(h,g_2,t)\in D$. Conversely every element of $E_{L_D}\ast D$ has the
form $(h,g_2,t)\in D$. Hence
$[GGT/E_{L_D}]\times_G^d[GGT/D]=|T:k_2(L_D)|\,[GGT/D]$, and dividing by $|T:k_2(L_D)|$ gives
$\widehat e_{L_D}\cdot[[GGT/D]]=[[GGT/D]]$. The right-hand identity is symmetric.

    \smallskip 

\noindent 2. By part (1), it is clear that either $\widehat e_{L_D} = 0$ or $\widehat e_{R_D} = 0$ implies that $\Big[\Big[\frac{GGT}{D}\Big]\Big] = 0$. Conversely, if $\Big[\Big[\frac{GGT}{D}\Big]\Big] = 0$, then $D$ is $\ast$-factorable by \cite[Lemma 3.2]{R1}, say $D=E\ast F$ over $H$ with $|H|<|G|$. Then $E_{L_D} = E\ast (F\ast D^{op})$ and $E_{R_D} = (D^{op} \ast E) \ast F$ are $\ast$-factorable over $H$, so $\widehat e_{L_D} = \widehat e_{R_D} = 0$ by Proposition \ref{zeros}.
\end{proof}


\section{Central Idempotents of \(\widehat{kB_T}(G)\)}


In this section, we construct orthogonal central idempotents of the essential algebra using ideas inspired by \cite{BC}. The arguments closely follow those in~\cite[Section 4]{BC} and~\cite[Section 4]{CM}. For this reason, we omit most of the proofs and provide details only in the cases where the arguments differ from the original proofs. For the convenience of the reader, we indicate alongside each statement the corresponding result from~\cite{BC}, where a similar proof can be found.
\begin{nothing} \textbf{Orthogonal Idempotents.}
Using the construction given in \cite[Section 4.3]{BC}, we introduce orthogonal idempotents $\widehat{f}_A$ in the essential algebra $\widehat{kB_T}(G)$, where $A \in \calR_{G\times T}$.
We define
\begin{equation*}
\widehat{f}_A := \sum_{\substack{B \in \calR_{G\times T} \\ B \leq A}} \mu(B,A)\widehat{e}_B,
\end{equation*}
where $\mu$ denotes the Möbius function of the poset $\calR_{G\times T}$, which is equal to the M\"obius function $\mu_\unlhd$ of the poset of normal subgroups of $G\times T$ by Lemma~\ref{lem: topdown}. By the Möbius inversion formula, it follows that
\begin{equation*}
\widehat{e}_A = \sum_{\substack{B \in \calR_{G\times T} \\ B \leq A}} \widehat{f}_B,
\end{equation*}
for all $A \in \calR_{G\times T}$, and, in particular,
\begin{equation}
\label{eq: sumeqone}
1_{\widehat{kB_T}(G)} =\widehat{e}_{G\times T} = \sum_{B \in \calR_{G\times T}} \widehat{f}_B.
\end{equation}
\end{nothing}

\begin{prop}({\cite[Proposition 4.4]{BC}})
\label{prop: ortipots}
Let $G$ be a finite group and let $A,B \in \calR_{G\times T}$. Then
\begin{equation*}
\widehat{e}_A\cdot \widehat{f}_B = \widehat{f}_B\cdot \widehat{e}_A =
\begin{cases}
\widehat{f}_B & \text{if } B \leq A,\\
0 & \text{otherwise},
\end{cases}
\end{equation*}
and
\begin{equation*}
\widehat{f}_A\cdot \widehat{f}_B =
\begin{cases}
\widehat{f}_A & \text{if } A = B,\\
0 & \text{otherwise}.
\end{cases}
\end{equation*}
\end{prop}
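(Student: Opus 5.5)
The plan is to reduce everything to the multiplication rule of Lemma~\ref{ipots1} and then argue combinatorially with the M\"obius function, exactly as in the fibered case of \cite[Proposition 4.4]{BC}.

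\textbf{Commutativity.} By Lemma~\ref{ipots1}, $\widehat e_A\widehat e_C=\widehat e_{A\cap C}$ or $0$, and this expression is symmetric in $A$ and $C$. Hence the $\widehat e$'s commute pairwise, and so do all the $\widehat f$'s, which are linear combinations of them.

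\textbf{First identity.} Expand
\[
\widehat e_A\widehat f_B=\sum_{C\le B,\,C\in\calR_{G\times T}}\mu(C,B)\,\widehat e_{A\cap C}.
\]
Here the terms with $A\cap C\notin\calR_{G\times T}$ vanish, so we may simply write $\widehat e_{A\cap C}$ with the convention that it is $0$ off $\calR_{G\times T}$; this convention is consistent because $\widehat e_X=0$ exactly when $X$ is not reduced.

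If $B\le A$, then $A\cap C=C$ for every $C\le B$, and the sum is $\widehat f_B$.

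If $B\not\le A$, set $X=A\cap B$. If $X$ is not reduced, every $A\cap C\le X$ is non-reduced by Lemma~\ref{lem: topdown}, so the sum is $0$. Otherwise, group the terms according to the value $D=A\cap C$:
\[
\widehat e_A\widehat f_B=\sum_{D\le X}\widehat e_D\sum_{\substack{C\le B,\ C\in\calR_{G\times T}\\ A\cap C=D}}\mu(C,B).
\]
I then want to show that each inner sum vanishes, using Möbius inversion on the interval $[D,B]$.

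Since $\calR_{G\times T}$ is upward closed (Lemma~\ref{lem: topdown}), every $C$ with $D\le C\le B$ is reduced. So the interval is just the interval in the lattice of normal subgroups of $G\times T$. Note that $C\le B$ is automatically normal in $G\times T$ by Lemma~\ref{lem: Anormal}, and that $X\unlhd G\times T$, so $C\cap A=C\cap X$.

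Write $S(D)=\sum_{C\le B,\ C\cap X=D}\mu(C,B)$. Then for each $D'\le X$,
\[
\sum_{D\ge D',\ D\le X}S(D)=\sum_{D'\le C\le B,\ C\cap X\ge D'}\mu(C,B)=\sum_{D'\le C\le B}\mu(C,B).
\]
The right-hand side equals $\delta_{D',B}$, which is $0$ because $D'\le X<B$. Downward induction on $D$ then gives $S(D)=0$ for all $D\le X$, and hence $\widehat e_A\widehat f_B=0$.

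\textbf{Second identity.} Using the first identity,
\[
\widehat f_A\widehat f_B=\sum_{C\le A}\mu(C,A)\,\widehat e_C\widehat f_B=\sum_{B\le C\le A}\mu(C,A)\,\widehat f_B.
\]
The coefficient $\sum_{B\le C\le A}\mu(C,A)$ equals $\delta_{A,B}$, and is $0$ when $B\not\le A$ since the sum is then empty. This gives the claim.

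\textbf{Main obstacle.} The delicate point is the vanishing of the grouped inner sums. It relies on two facts: the interval $[D,B]$ lies entirely inside $\calR_{G\times T}$, so that $\mu$ agrees with $\mu_\unlhd$; and intersection with $A$ agrees with intersection with $X$ on subgroups of $B$. Both are supplied by Lemmas~\ref{lem: topdown} and~\ref{lem: Anormal}.
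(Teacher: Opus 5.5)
Your proof is correct and follows the same route as the paper. The paper rests on the product rule of Lemma~\ref{ipots1} and the upward closure of $\calR_{G\times T}$ from Lemma~\ref{lem: topdown}, and defers the M\"obius bookkeeping to \cite[Proposition 4.4]{BC}; your grouping argument supplies exactly that bookkeeping. Two small points of phrasing: you only need $S(D)=0$ for reduced $D$, since $\widehat e_D=0$ otherwise, and only for such $D$ does the interval $[D,B]$ lie in $\calR_{G\times T}$; also, $C\cap A=C\cap X$ holds simply because $C\le B$, with no normality needed.
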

\begin{proof}
The key observation is that if \(\widehat{e}_A=0\) and \(B\leq A\), then Lemma~\ref{lem: topdown} implies that \(\widehat{e}_C=0\) for every subgroup \(C\leq B\leq A\). Consequently, \(\widehat{f}_B=0\). The rest of the proof follows similarly to that of~\cite[Proposition 4.4]{BC}.
\end{proof}

Now we establish a theorem describing the interaction between the idempotents introduced above and the elements of the essential algebra. This result will allow us to define an equivalence relation on the poset of reduced subgroups such that the sum of the corresponding idempotents over each equivalence class yields a central idempotent of the essential algebra.

\begin{theorem}({\cite[Lemma 4.5]{BC}})
\label{thm: leftrightinvariants}
Let $G$ be a finite group and let $D$ be a covering subgroup of $GGT$ which is not $\ast$-factorable. Let $L$ and $R$ denote the left and right invariants of $D$, respectively. Then
\[
\widehat{f}_L\cdot \Big[\Big[\frac{GGT}{D}\Big]\Big] \cdot \widehat{f}_R
= \Big[\Big[\frac{GGT}{D}\Big]\Big] \cdot \widehat{f}_R
= \widehat{f}_L\cdot \Big[\Big[\frac{GGT}{D}\Big]\Big].
\]
\end{theorem}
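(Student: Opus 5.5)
The plan is to show that the isomorphism $R\cong L$ induced by $D$ intertwines left and right multiplication of $x:=\bigl[\bigl[\tfrac{GGT}{D}\bigr]\bigr]$ by the idempotents $\widehat e$, and then to sum with M\"obius coefficients to get $\widehat f_L\, x = x\,\widehat f_R$. The full statement then follows formally: $\widehat f_L\, x\,\widehat f_R=\widehat f_L\widehat f_L\, x=\widehat f_L\, x$ by Proposition~\ref{prop: ortipots}.

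\textbf{Step 1: the isomorphism $\Phi$.} Write $D=\{(\alpha(g,t),g,t)\mid (g,t)\in R\}$ with associated pair $(R,\alpha)$. Define
\[
\Phi\colon R\to L,\qquad \Phi(g,t)=(\alpha(g,t),t).
\]
This map is a surjective homomorphism onto $L=p_{1,3}(D)$. Its kernel consists of elements $(g,1)\in R$ with $\alpha(g,1)=1$, so it is trivial because $k_2(D)=1$. Hence $\Phi$ is an isomorphism, and it preserves the $T$-coordinate. Consequently $k_2(\Phi(C))=k_2(C)$ for every $C\le R$, and in particular $k_2(L)=k_2(R)$. Moreover $\Phi$ induces an isomorphism of subgroup lattices from $\{C\le R\}$ onto $\{C'\le L\}$.

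\textbf{Step 2: the intertwining identity.} For $C\le R$, I will verify directly from the definition of the star product, as in Lemma~\ref{lem: productofcovering}, that
\[
E_{\Phi(C)}\ast D \;=\; \{(\alpha(g,t),g,t)\mid (g,t)\in C\}\;=\; D\ast E_C .
\]
Next I apply Lemma~\ref{lem: product} on each side; it is applicable since $p_1(D)=p_2(D)=G$.
\begin{itemize}
\item On the left, $p_{2,3}(E_{\Phi(C)})=\Phi(C)\le L$, so the relevant product subgroup is $\Phi(C)L=L$.
\item On the right, $p_{1,3}(E_C)=C\le R$, so the relevant product subgroup is $RC=R$.
\end{itemize}
Both sides therefore carry the coefficient $|T:k_2(L)|/|T:k_2(C)|=|T:k_2(R)|/|T:k_2(C)|$ in front of the same basis element. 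This gives
\[
\widehat e_{\Phi(C)}\, x \;=\; x\,\widehat e_C \qquad\text{for all } C\le R .
\]

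\textbf{Step 3: summing with M\"obius coefficients.} We have
\[
x\,\widehat f_R=\sum_{C\in\calR_{G\times T},\,C\le R}\mu(C,R)\,x\,\widehat e_C
\quad\text{and}\quad
\widehat f_L\,x=\sum_{C'\in\calR_{G\times T},\,C'\le L}\mu(C',L)\,\widehat e_{C'}\,x .
\]
A term on either side is nonzero only if $x\,\widehat e_C=\widehat e_{\Phi(C)}\,x\neq 0$, which forces both $C$ and $\Phi(C)$ to be reduced. So the nonzero terms are matched bijectively by $C\mapsto\Phi(C)$, and it remains to check $\mu(C,R)=\mu(\Phi(C),L)$ for such $C$.

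Here I use that the M\"obius function on $\calR_{G\times T}$ is the M\"obius function of normal subgroups of $G\times T$, via the upward closedness from Lemma~\ref{lem: topdown}. When $p_1(C)=G$, every subgroup between $C$ and $R$ also has $p_1=G$, hence is normal in $G\times T$ by Lemma~\ref{lem: Anormal}. So the interval $[C,R]$ in the normal-subgroup poset equals the full subgroup interval. The same holds for $[\Phi(C),L]$, since $p_1(\Phi(C))=G$ by reducedness. Since $\Phi$ maps the subgroup interval $[C,R]$ isomorphically onto $[\Phi(C),L]$, the M\"obius values agree.

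The main obstacle is this bookkeeping in Step~3: the index sets $\{C\in\calR_{G\times T}: C\le R\}$ and $\{C'\in\calR_{G\times T}: C'\le L\}$ need not correspond exactly under $\Phi$. I handle this by discarding the terms that vanish, as above, and by identifying the relevant M\"obius functions with those of ordinary subgroup intervals.
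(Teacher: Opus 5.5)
Your route differs from the paper's and can be repaired, but Step~1 contains a false claim, and the coefficient comparison in Step~2 depends on it. The isomorphism $\Phi$ preserves the $T$-coordinate, hence $p_2$, but it does not preserve $k_2$: whether $(1,t)$ lies in a subgroup depends on the first coordinate, and $\Phi$ changes that coordinate.

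For example, let $R=G\times K$ and $D=\{(g\beta(t),g,t)\mid g\in G,\ t\in K\}$, with $\beta\colon K\to Z(G)$ a nontrivial homomorphism. This $D$ is covering, and it is not $\ast$-factorable because $G\times K$ is reduced. For $C=1\times K$ you get $\Phi(C)=\{(\beta(t),t)\mid t\in K\}$, so $k_2(\Phi(C))=\ker\beta\neq K=k_2(C)$. Take $G=K=T=C_2$ and $\beta$ an isomorphism. Then the two coefficients in Step~2 are $|T:k_2(L)|/|T:k_2(\Phi(C))|=1/2$ and $|T:k_2(R)|/|T:k_2(C)|=1$.

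Even $k_2(L)=k_2(R)$ fails as an equality of subgroups. Take $G=\langle a\rangle\cong C_4$, $T=\langle s\rangle\times\langle u\rangle\cong C_4\times C_2$, $R=\{(a^i,s^ju^i)\}$ and $\alpha(a^i,s^ju^i)=a^{i-j}$. This $R$ is reduced by Corollary~\ref{desTabel}: here $K=\langle s\rangle$, so $\theta$ would have to be defined on all of $T$, with $\theta(u)\in\{a,a^3\}$, which is impossible since $u^2=1$. Yet $k_2(R)=\langle s\rangle$ while $k_2(L)=\langle su\rangle$.

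The identity $\widehat{e}_{\Phi(C)}\,x=x\,\widehat{e}_C$ is nevertheless true for every $C\le R$, and the fix is short. Let $Y_C=\{(\alpha(g,t),g,t)\mid (g,t)\in C\}$ be the common star product. Then $p_1(Y_C)=p_1(\Phi(C))$ and $p_2(Y_C)=p_1(C)$.

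If either of these is a proper subgroup of $G$, then $Y_C$ is not covering, and both sides vanish in $\widehat{kB_T}(G)$; this is what happens in the first example. Otherwise, compare orders instead of subgroups. From $|X|=|p_1(X)|\,|k_2(X)|$ and $|\Phi(C)|=|C|$ you get $|k_2(\Phi(C))|=|C|/|G|=|k_2(C)|$, and likewise $|k_2(L)|=|k_2(R)|$. The coefficients depend only on these orders, so they agree.

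With this correction Step~3 goes through as you wrote it. Note that your matching of nonzero terms also uses the identity for non-reduced $C$, and the case distinction covers that.

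For comparison, the paper avoids the coefficient and M\"obius bookkeeping altogether. For reduced $X<L$ it shows that the right invariant of $E_X\ast D$ is strictly contained in $R$, so $\widehat{e}_X\,x\,\widehat{f}_R=0$ by Proposition~\ref{prop: ortipots}. Expanding $\widehat{f}_L$ then gives $\widehat{f}_L\,x\,\widehat{f}_R=x\,\widehat{f}_R$, and symmetry gives the other equality. Your argument is longer, but once repaired it proves more: the transport identity $\widehat{e}_{\Phi(C)}\,x=x\,\widehat{e}_C$ for all $C\le R$, from which $\widehat{f}_L\,x=x\,\widehat{f}_R$ follows directly.
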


\begin{proof}
It suffices to show that
\[
\widehat{e}_X\cdot \Big[\Big[\frac{GGT}{D}\Big]\Big] \cdot \widehat{f}_R = 0
\]
for all $X < L$ in $\calR_{G\times T}$. We have
\[
\widehat{e}_X\cdot \Big[\Big[\frac{GGT}{D}\Big]\Big]
= \frac{1}{|k_2(L):k_2(X)|}\Big[\Big[\frac{GGT}{E_X * D}\Big]\Big].
\]
Set $y := E_X * D$. If $y$ is not a covering subgroup of $GGT$, then the claim follows immediately. Hence, assume that $y$ is a covering subgroup.

By definition,
\begin{align*}
y &= \{(g,g',t)\mid \exists\, h \in G \text{ such that } (g,h,t)\in E_X \text{ and } (h,g',t)\in D\} \\
  &= \{(g,g',t)\mid (g,g,t)\in E_X \text{ and } (g,g',t)\in D\}.
\end{align*}
Hence $p_{2,3}(y)\subseteq R$. Moreover, as $X < L$, there exists $(g,t)\in L$ such that $(g,t)\notin X$. Also by definition of $L$, there exists $h \in G$ such that $(g,h,t)\in D$, and hence $(h,t)\in R$. 

We claim that $(h,t)\notin p_{2,3}(y)$. Indeed, if $(h,t)\in p_{2,3}(y)$, then there exists $g''\in G$ such that $(g'',h,t)\in D$ and $(g'',t)\in X$. Since $D$ is a covering subgroup, this would imply $g''=g$, and hence $(g,t)\in X$, a contradiction. Therefore $(h,t)\notin p_{2,3}(y)$, and so $p_{2,3}(y) \subsetneq R$.
It follows that the right invariant of $y$ is strictly contained in $R$, and hence
\[
\widehat{e_X}\cdot \Big[\Big[\frac{GGT}{D}\Big]\Big] \cdot \widehat{f}_R = 0
\]
by Proposition~\ref{prop: ortipots}.
\end{proof}\begin{nothing}
\textbf{Linkage.} For $A,B \in \calR_{G\times T}$, we say that $A$ and $B$ are \emph{linked}, written $A\sim B$, if there exists a covering subgroup $D \leq GGT$ such that $L_D = A$ and $R_D = B$. This is an equivalence relation; it is reflexive via $E_A$, it is symmetric via $D^{op}$ and the transitivity holds since $L_{D_1\ast D_2} = L_{D_1}$ and $R_{D_1\ast D_2} = R_{D_2}$ for covering subgroups $D_1$ and $D_2$ with $R_{D_1} = L_{D_2}$. Note that if $A, B$ are reduced and $D$ is covering with $L_D = A$, $R_D = B$, then by Lemma \ref{lem: reduced-invariants}, we have $[[GGT/D]]\neq 0$. We write $\calR_{G\times T}/\sim$ for a complete set of linkage class representatives. For each $A \in \calR_{G\times T}/\sim$, we write $\widetilde{A}$ for the equivalence class of $A$ in $\calR_{G\times T}$.
\end{nothing}

\begin{lem}
Let $G$ be a finite group. The partial order on $\calR_{G\times T}$ induces a partial order on the set of linkage classes $\calR_{G\times T}/\sim$, defined by
\[
\widetilde{A} \leq \widetilde{B}
\]
if there exist representatives $A' \sim A$ and $B' \sim B$ such that $A' \leq B'$. This relation is well defined and endows $\calR_{G\times T}/\sim$ with a partial order.
\end{lem}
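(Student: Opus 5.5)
Reflexivity and transitivity of the relation on linkage classes are straightforward. The main work is antisymmetry, which needs a rigidity statement: linked reduced subgroups have the same order, and a covering subgroup transports inclusions between reduced subgroups.

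\textbf{Reflexivity.} Take $A'=B'=A$.

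\textbf{Well-definedness.} This is built into the definition, since it quantifies over all representatives.

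\textbf{Size invariant.} First I record that linked subgroups have the same order. If $D$ is covering with $L_D=A$ and $R_D=B$, then $p_{1,3}$ and $p_{2,3}$ restricted to $D$ are injective. Indeed, $k_2(D)=k_1(D)=1$, so $(g,g',t)$ is determined by $(g,t)$, and also by $(g',t)$. Hence $|A|=|D|=|B|$.

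\textbf{Transport lemma.} Let $D$ be covering with $L_D=A$, $R_D=B$, and let $C\le B$ be reduced. Put $D_C:=D\ast E_C$. By Lemma~\ref{lem: productofcovering}, $D_C=\{(\alpha(g,t),g,t)\mid (g,t)\in C\}$. So $R_{D_C}=C$, and $L_{D_C}=\{(\alpha(c),t)\mid c=(g,t)\in C\}=:C'$, which lies in $A$.

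I next check that $D_C$ is covering. We have $p_2(D_C)=p_1(C)=G$, and $k_1(D_C)=1$ because $k_1(D)=1$. For $p_1(D_C)=G$ and $k_2(D_C)=1$ I would use $D_C^{op}=E_C\ast D^{op}$ together with the symmetric description. The condition $k_2(D_C)=1$ holds because $k_2(D)=1$.

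The point $p_1(D_C)=G$ is the delicate one. Since $C$ is reduced, Corollary~\ref{desTabel} and Lemma~\ref{lem: reduced-invariants} show that $\widehat e_C\neq0$. Moreover $[[GGT/D_C]]=[[GGT/D]]\,\widehat e_C$ up to a unit scalar, by Lemma~\ref{lem: product}. So it suffices to know that $[[GGT/D_C]]\neq0$, and then Romero's vanishing of non-covering classes gives that $D_C$ is covering.

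To get $[[GGT/D_C]]\neq 0$, I would compute $D^{op}\ast D_C=E_{B}\ast E_C=E_C$, since $D^{op}\ast D=E_B$. If $D_C$ vanished, so would $E_C$, which contradicts $C$ being reduced. Hence $D_C$ is covering and nonvanishing, and by Lemma~\ref{lem: reduced-invariants} $C'$ is reduced and $C'\sim C$.

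\textbf{Transitivity.} Suppose $A'\le B'$ with $A'\sim A$, $B'\sim B$, and $B''\le C'$ with $B''\sim B$, $C'\sim C$. Choose $D$ covering with $L_D=B'$ and $R_D=B''$. Transport $A'\le B'=L_D$ along $D^{op}$. This gives a reduced subgroup $A''\le B''\le C'$ with $A''\sim A'\sim A$. Hence $\widetilde A\le\widetilde C$.

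\textbf{Antisymmetry.} Suppose $A_1\le B_1$ and $B_2\le A_2$, with $A_i\sim A$ and $B_i\sim B$. Link $B_1$ to $B_2$ by some covering $D$ and transport $A_1$. This yields $A_1''\le B_2\le A_2$ with $A_1''\sim A_2$. By the size invariant $|A_1''|=|A_2|$, so $B_2=A_2$, and therefore $\widetilde A=\widetilde B$.

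The main obstacle is the transport lemma, specifically showing that $D\ast E_C$ is again covering and nonvanishing. I handle this through the nonvanishing of $E_C$ and the identity $D^{op}\ast D=E_{R_D}$, rather than by a direct Goursat computation.
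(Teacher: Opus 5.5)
Your proposal is correct and is essentially the paper's argument. The paper also transports an inclusion along a linking covering subgroup: it uses $E_A\ast D$ on the left, where you use $D\ast E_C$ on the right. It likewise gets coveringness and reducedness of the new invariant from $(E_A\ast D)\ast D^{\mathrm{op}}=E_A$, just as you do via $D^{\mathrm{op}}\ast D_C=E_C$. You additionally make reflexivity and antisymmetry explicit through the equal-order property of linked subgroups, which the paper's proof leaves implicit and only records later, in the proof of Proposition~\ref{prop: centipots}.
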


\begin{proof}
    It is enough to prove that if $A\leq B$ then for any $B'\sim B$ there exists $A'\sim A$ such that $A'\leq B'$. For this we consider a covering group $D\leq GGT$ that links $B$ to $B'$, that is $p_{1,3}(D)=B$ and $p_{2,3}(D)=B'$. Lets consider
    \[
        E_A*D=\{(g_1,g_2,t)\in D\mid (g_1,t)\in A\}.
    \]
    Note that $p_{1,3}(E_A*D)=B\cap A=A$ and thus this subgroup links $A$ to the right invariant $A'$ of the group $E_A*D$. Moreover, we have $(E_A*D)*D^{op}=E_A*E_B=E_A$ and thus right invariant of the subgroup $E_A*D$ must be reduced and hence that $E_A*D$ is covering. Now by construction $A'\leq B'$ which completes the proof.
\end{proof}

\begin{nothing}
With this order we may form class sums of idempotents. For $A\in\calR_{G\times T}$ set
\[
\widetilde{e}_A:=\sum_{B\in\widetilde{A}}\widehat{e}_B
\qquad\text{and}\qquad
\widetilde{f}_A:=\sum_{B\in\widetilde{A}}\widehat{f}_B .
\]
\end{nothing}

\begin{prop}[cf.\ {\cite[Proposition 5.6]{BC}}]\label{prop: centipots}
Let $A,B\in\calR_{G\times T}$. Then
\[
\widetilde{e}_A\,\widetilde{f}_B=\widetilde{f}_B\,\widetilde{e}_A=0\ \text{ unless }\widetilde{B}\le\widetilde{A},
\qquad
\widetilde{e}_A\,\widetilde{f}_A=\widetilde{f}_A\,\widetilde{e}_A=\widetilde{f}_A,
\]
\[
\widetilde{f}_A\,\widetilde{f}_B=
\begin{cases}\widetilde{f}_A,&\widetilde{A}=\widetilde{B},\\[2pt]0,&\text{otherwise.}\end{cases}
\]
\end{prop}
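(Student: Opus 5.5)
The plan is to expand both class sums and reduce every product to the pairwise rules of Proposition~\ref{prop: ortipots}. By definition,
\[
\widetilde{e}_A\,\widetilde{f}_B=\sum_{A'\in\widetilde{A}}\ \sum_{B'\in\widetilde{B}}\widehat{e}_{A'}\widehat{f}_{B'} .
\]
By Proposition~\ref{prop: ortipots}, each term equals $\widehat f_{B'}$ when $B'\le A'$ and is $0$ otherwise. If some term is nonzero, then there are $A'\sim A$ and $B'\sim B$ with $B'\le A'$, and by definition of the induced order this says $\widetilde B\le\widetilde A$. So the product vanishes unless $\widetilde B\le\widetilde A$. The same argument applies to $\widetilde f_B\widetilde e_A$, since $\widehat f_{B'}\widehat e_{A'}=\widehat e_{A'}\widehat f_{B'}$.

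For the third identity, expand $\widetilde f_A\widetilde f_B$ in the same way. By the orthogonality $\widehat f_{A'}\widehat f_{B'}=\delta_{A',B'}\widehat f_{A'}$, the product equals
\[
\sum_{C\in\widetilde A\cap\widetilde B}\widehat f_C .
\]
Linkage classes are either equal or disjoint. So this sum is $\widetilde f_A$ when $\widetilde A=\widetilde B$ and $0$ otherwise.

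The identity $\widetilde e_A\widetilde f_A=\widetilde f_A$ is the main point. Expanding gives
\[
\widetilde e_A\widetilde f_A=\sum_{B'\in\widetilde A} n_{B'}\,\widehat f_{B'},\qquad n_{B'}=\#\{A'\in\widetilde A : B'\le A'\}.
\]
Each $n_{B'}\ge1$, since $B'\le B'$. So the key step is to show that two linked reduced subgroups $B'\le A'$ must be equal; then $n_{B'}=1$ and we are done.

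To prove this, I would use a cardinality argument. A covering subgroup $D$ with pair $(A,\alpha)$ satisfies $|D|=|A|$ and $|L_D|=|p_{1,3}(D)|$. The map $D\to L_D$, $(\alpha(g,t),g,t)\mapsto(\alpha(g,t),t)$, is injective: its kernel consists of elements $(1,g,1)\in D$, and these are trivial because $k_2(D)=1$. Hence $|L_D|=|D|=|R_D|$. So linked subgroups have the same order, and $B'\le A'$ with $B'\sim A'$ forces $B'=A'$.

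This cardinality step is the only non-formal input, and I expect it to be the main (mild) obstacle. One must check that the injectivity really uses only $k_2(D)=1$, and symmetrically $k_1(D)=1$ for the map $D\to R_D$. With this in hand, $\widetilde f_A\widetilde e_A=\widetilde f_A$ follows in the same way.
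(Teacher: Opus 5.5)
Your proposal is correct and follows the paper's proof. You expand the class sums, apply Proposition~\ref{prop: ortipots} term-wise, and use that linked subgroups have equal order: the projections $D\to R_D$ and $D\to L_D$ are bijective because $k_1(D)=k_2(D)=1$, so comparable members of a linkage class coincide and the count $n_{B'}$ equals $1$.
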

\begin{proof}
Expand the class sums and apply Proposition~\ref{prop: ortipots} term-wise. The only point beyond \cite[Proposition 5.6]{BC} is that linked subgroups have equal order: if $A\sim B$ then $|A|=|B|$,
since a covering $D$ with $L_D=A$, $R_D=B$ gives a bijection $R_D\to L_D$. Hence within a class $\widetilde A$, $A'\le A''$ forces $A'=A''$, which gives $\widetilde e_A\widetilde f_A=\widetilde f_A$.
\end{proof}

\begin{lem}[cf.\ {\cite[Lemma 6.3]{BC}}]
\label{lem: f-orthogonality}
Let $D$ be a covering subgroup of $GGT$, and $A,B\in\calR_{G\times T}$.
If $\widetilde{f}_A\,\Big[\Big[\tfrac{GGT}{D}\Big]\Big]\,\widetilde{f}_B\neq 0$, then $\widetilde{A}=\widetilde{B}$.
\end{lem}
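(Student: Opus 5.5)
Set $x:=\big[\big[\tfrac{GGT}{D}\big]\big]$ and argue by contraposition. If $x=0$ there is nothing to prove, so assume $x\neq 0$. Then $D$ is not $\ast$-factorable, $L:=L_D$ and $R:=R_D$ are reduced by Lemma~\ref{lem: reduced-invariants}, and $L\sim R$ via $D$ itself. So it suffices to show that a nonzero product forces $\widetilde A=\widetilde L$ and $\widetilde B=\widetilde R$, since then $\widetilde A=\widetilde L=\widetilde R=\widetilde B$.

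First I reduce to the idempotents attached to $L$ and $R$. By Lemma~\ref{lem: reduced-invariants}(1), $x=\widehat e_L\,x\,\widehat e_R$, so
\[
\widetilde f_A\,x\,\widetilde f_B=(\widetilde f_A\widehat e_L)\,x\,(\widehat e_R\widetilde f_B).
\]
By Proposition~\ref{prop: ortipots}, $\widehat f_{A'}\widehat e_L$ equals $\widehat f_{A'}$ if $A'\le L$ and is $0$ otherwise. So only the terms $\widehat f_{A'}$ with $A'\in\widetilde A$ and $A'\le L$ survive, and similarly on the right only $B'\in\widetilde B$ with $B'\le R$. A nonzero product therefore gives $A'\in\widetilde A$ with $A'\le L$ and $B'\in\widetilde B$ with $B'\le R$, hence $\widetilde A\le\widetilde L$ and $\widetilde B\le\widetilde R$.

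For the reverse inequality I use Theorem~\ref{thm: leftrightinvariants} to push idempotents across $x$. The term $\widehat f_{A'}\,x\,\widehat f_{B'}$ is nonzero for some $A'\le L$ and $B'\le R$. I then want to replace $x$ by $\widehat e_{A'}x$, which is a scalar multiple of $\big[\big[\tfrac{GGT}{E_{A'}*D}\big]\big]$, using the computation in the proof of Theorem~\ref{thm: leftrightinvariants}. Write $y=E_{A'}*D$. If $y$ is not covering, the term vanishes. Otherwise $L_y=A'$, and $y$ links $A'$ to $R_y\le R$ (as in the proof that the order on linkage classes is well defined), with $|R_y|=|A'|$.

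Applying Theorem~\ref{thm: leftrightinvariants} to $y$ gives $\widehat f_{A'}[[y]]=\widehat f_{A'}[[y]]\widehat f_{R_y}$. Multiplying on the right by $\widehat f_{B'}$ and using the orthogonality in Proposition~\ref{prop: ortipots}, the term is nonzero only if $B'=R_y$, so $A'\sim B'$ and $\widetilde A=\widetilde B$. This argument concludes directly, without needing $\widetilde A=\widetilde L$. The only issue is justifying $\widehat f_{A'}x=\widehat f_{A'}\widehat e_{A'}x$, which holds by Proposition~\ref{prop: ortipots} since $\widehat f_{A'}\widehat e_{A'}=\widehat f_{A'}$.

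The main obstacle I expect is the bookkeeping around $y$: checking that $y$ is covering exactly when the term is nonzero, and that its invariants are $A'$ and $R_y$. This is where covering-ness, namely $k_1(y)=k_2(y)=1$ inherited from $D$ together with $p_1(A')=G$, must be verified. I expect it to follow from the description $y=\{(g,g',t)\in D\mid (g,t)\in A'\}$.
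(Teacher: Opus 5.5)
Your argument is correct, but it takes a different route from the paper's proof.

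\textbf{The paper's route.} The paper expands $\widehat f_{B'}$ by M\"obius inversion into idempotents $\widehat e_{B''}$ with $B''\le B'$. It truncates $D$ on the right to $D\ast E_{B''}$ and uses Proposition~\ref{prop: ortipots} only to obtain $A'\le L_{D\ast E_{B''}}$. This gives just the inequality $\widetilde A\le\widetilde B$ of linkage classes. Equality then needs the symmetric argument together with antisymmetry of the induced order on $\calR_{G\times T}/\sim$, which in turn rests on linked subgroups having equal order.

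\textbf{Your route.} You truncate on the left instead, with $y=E_{A'}\ast D$, and apply Theorem~\ref{thm: leftrightinvariants} to $y$. Orthogonality of the $\widehat f$'s then forces $R_y=B'$ exactly. So in one pass you produce an explicit covering subgroup linking $A'$ to $B'$, with no symmetry step and no appeal to the order on linkage classes. You in fact get the sharper statement that $\widehat f_{A'}[[D]]\widehat f_{B'}\neq 0$ already forces $A'\sim B'$, via $E_{A'}\ast D$. The price is a dependence on Theorem~\ref{thm: leftrightinvariants}, which the paper proves just before but does not use for this lemma.

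\textbf{Bookkeeping.}
\begin{itemize}
\item Your first step, showing $\widetilde A\le\widetilde L$, is not needed for its own sake. It is needed only to guarantee $A'\le L$, so that $L_y=A'\cap L=A'$.
\item Before invoking Theorem~\ref{thm: leftrightinvariants}, it is not enough that $y$ is covering; you need $[[y]]\neq 0$. This follows at once from the nonvanishing of the term, whatever the scalar $c$ in $\widehat e_{A'}[[D]]=c\,[[y]]$, since $c\,z\neq0$ forces $z\neq0$.
\item By Lemma~\ref{lem: reduced-invariants}, $[[y]]\neq0$ also makes $R_y$ reduced, so $\widehat f_{R_y}$ is defined.
\end{itemize}
Your stated worry that ``$y$ is covering exactly when the term is nonzero'' should be replaced by this one-directional implication.
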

\begin{proof}
If $\widetilde{f}_A\,[[\tfrac{GGT}{D}]]\,\widetilde{f}_B\neq 0$, there exist $A'\sim A$ and
$B'\sim B$ with $\widehat{f}_{A'}\,[[\tfrac{GGT}{D}]]\,\widehat{f}_{B'}\neq 0$. Since
$\widehat{f}_{B'}=\sum_{B''\le B'}\mu(B'',B')\widehat{e}_{B''}$, there is some
$B''\le B'$ in $\calR_{G\times T}$ with
$\widehat{f}_{A'}\,[[\tfrac{GGT}{D}]]\,\widehat{e}_{B''}\neq 0$; in particular
$[[\tfrac{GGT}{D}]]\,\widehat{e}_{B''}\neq 0$. Note that
$D\ast E_{B''}=\{(g_1,g_2,t)\in D\mid (g_2,t)\in B''\}$, so
$A'':=p_{1,3}(D\ast E_{B''})\le p_{1,3}(D)$ and
$p_{2,3}(D\ast E_{B''})\le B''\le B'$.
Now we get  
\[
0 \neq \widehat{f}_{A'}\,\Big[\Big[\tfrac{GGT}{D}\Big]\Big]\,\widehat{e}_{B^{''}} = \widehat{f}_{A'}\widehat{e}_{A^{''}}\,\Big[\Big[\tfrac{GGT}{D}\Big]\Big]\,\widehat{e}_{B^{''}}
\]
which implies $A'\le A^{''}$ by the above proposition. Therefore we get
\[
\widetilde{A} = \widetilde{A'} \le \widetilde{A^{''}} = \widetilde{p_{1,3}(D\ast E_{B^{''}})}= \widetilde{p_{2,3}(D\ast E_{B^{''}})}\le \widetilde{B^{''}}\le \widetilde{B'} = \widetilde{B}.
\]
By symmetry, we get the reverse relation and hence the equality, as required.
\end{proof}

\begin{theorem}[cf.\ {\cite[Corollary 6.4]{BC}}]
\label{thm: central ipots}
Let $G$ be a finite group.
\begin{enumerate}
  \item The elements $\widetilde{f}_A$, $A\in\calR_{G\times T}/\sim$, are mutually orthogonal
        central idempotents of $\widehat{kB_T}(G)$ whose sum is $1$.
  \item Consequently $\widehat{kB_T}(G)$ decomposes into two-sided ideals
        \begin{equation}\label{eq: toideals}
          \widehat{kB_T}(G)=\bigoplus_{A\in\calR_{G\times T}/\sim}\widetilde{f}_A\,\widehat{kB_T}(G).
        \end{equation}
  \item $\widehat{kB_T}(G)$ also decomposes into $k$-submodules indexed by linkage classes,
        \begin{equation}\label{eq: tosubmodules}
          \widehat{kB_T}(G)=\bigoplus_{A\in\calR_{G\times T}/\sim}\widehat{kB_T}(G)^{\widetilde{A}},
        \end{equation}
        where $\widehat{kB_T}(G)^{\widetilde{A}}$ is spanned by the
        $\big[\big[\tfrac{GGT}{D}\big]\big]$ with $L_D\in\widetilde{A}$ (equivalently $R_D\in\widetilde{A}$).
\end{enumerate}
\end{theorem}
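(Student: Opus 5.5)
The plan is to combine Proposition~\ref{prop: centipots}, Lemma~\ref{lem: f-orthogonality} and the partition of unity \eqref{eq: sumeqone}. Grouping the terms of \eqref{eq: sumeqone} by linkage class gives $\sum_{A\in\calR_{G\times T}/\sim}\widetilde f_A = \sum_{B\in\calR_{G\times T}}\widehat f_B = 1$. By the last formula of Proposition~\ref{prop: centipots}, the $\widetilde f_A$ are idempotents and pairwise orthogonal. Hence only centrality remains to be proved in (1).

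\textbf{Centrality.} The classes $[[GGT/D]]$ of non-$\ast$-factorable covering subgroups $D$ span $\widehat{kB_T}(G)$, so it suffices to check $\widetilde f_A x = x\widetilde f_A$ for $x=[[GGT/D]]$. We write
\[
x = 1\cdot x\cdot 1=\sum_{B,C\in\calR_{G\times T}/\sim}\widetilde f_B\, x\,\widetilde f_C .
\]
By Lemma~\ref{lem: f-orthogonality}, the only surviving terms have $\widetilde B=\widetilde C$, so $x=\sum_B \widetilde f_B x\widetilde f_B$. Multiplying on the left and on the right by $\widetilde f_A$ and using orthogonality gives
\[
\widetilde f_A x = \widetilde f_A x \widetilde f_A = x\widetilde f_A .
\]
This proves (1), and (2) is then the standard decomposition of an algebra by a complete family of orthogonal central idempotents.

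\textbf{Decomposition (3).} Let $D$ be non-$\ast$-factorable, so $x\neq 0$. By Lemma~\ref{lem: reduced-invariants}, $L=L_D$ and $R=R_D$ are reduced. Since $D$ links them, $L\sim R$, which explains the "equivalently" in the statement. Hence every basis element lies in exactly one $\widehat{kB_T}(G)^{\widetilde A}$, namely the one with $A\sim L_D$. These basis elements form a $k$-basis of the essential algebra, so the sum of the submodules is direct and exhausts $\widehat{kB_T}(G)$. This gives \eqref{eq: tosubmodules}. Optionally, one can identify $\widehat{kB_T}(G)^{\widetilde A}$ with $\widetilde f_A\widehat{kB_T}(G)$ using Lemma~\ref{lem: reduced-invariants} and Theorem~\ref{thm: leftrightinvariants}, but (3) as stated only requires the basis partition.

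\textbf{Expected obstacle.} The delicate step is the centrality argument, specifically applying Lemma~\ref{lem: f-orthogonality} to every spanning element and using that the $\widetilde f$-sum equals $1$. Both inputs are already established earlier in the paper. Beyond that, the only things to keep track of are that zero classes contribute nothing and that the class sums $\widetilde f_A$ are taken over reduced subgroups.
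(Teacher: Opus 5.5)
Your proposal is correct and follows the paper's own route: idempotency, orthogonality and $\sum_A\widetilde f_A=1$ come from Proposition~\ref{prop: centipots} and \eqref{eq: sumeqone}, centrality comes from Lemma~\ref{lem: f-orthogonality} via $x=\sum_{B,C}\widetilde f_B\,x\,\widetilde f_C=\sum_B\widetilde f_B\,x\,\widetilde f_B$ (which the paper only cites), and (3) comes from sorting the nonzero basis classes by their reduced, linked invariants. In (3) you should add that $L_D,R_D\unlhd G\times T$ by Lemma~\ref{lem: Anormal}, so the invariants do not change when $D$ is replaced by a conjugate and each basis class lies in a single summand; also, your optional ``identification'' of $\widehat{kB_T}(G)^{\widetilde A}$ with $\widetilde f_A\widehat{kB_T}(G)$ can only mean the projection isomorphism $b\mapsto b\widetilde f_A$ of Lemma~\ref{lem: comparing}, and in general it is not an equality of subspaces.
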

\begin{proof}
(1) Orthogonality and idempotency are Proposition~\ref{prop: centipots}, and
$\sum_A\widetilde{f}_A=1$ is Equation~(\ref{eq: sumeqone}); centrality follows from
Lemma~\ref{lem: f-orthogonality} via the three-decomposition argument of {\cite[Corollary 6.4]{BC}}.
Second part is immediate from~(1). The third part is the decomposition by left (equivalently right) invariants; its relation to Equation~(\ref{eq: toideals}) is established in Lemma~\ref{lem: comparing}.
\end{proof}

Now we define groups \(\Gamma_A\) and bisets \({}_A\Gamma_B\) associated to reduced subgroups \(A,B\in \mathcal{R}_{G\times T}\). We show that the elements of these bisets induce group isomorphisms and consequently equivalences between the corresponding module categories of group algebras. These groups will play a fundamental role in the next section in the description of the essential algebra and its simple modules. We also prove that \(\Gamma_A\) is isomorphic to the group of outer automorphisms of \(A\) that fix the second component.
\begin{defi}
Let $G$ be a finite group and let $A,B\in \calR_{G\times T}$ be linked. Set $K:=k_2(A)$. Define 

\smallskip

\[
\Gamma_A = \left\{ \frac{1}{|T:K|}\,\Big[\Big[\frac{GGT}{D}\Big]\Big] \;\middle|\; D \leq GGT \text{ such that } L_D = R_D = A \right\},
\]

\[
{}_A\Gamma_B = \left\{\Big[\Big[\frac{GGT}{D}\Big]\Big] \;\middle|\; D \leq GGT \text{ such that } L_D = A \text{ and } R_D = B \right\}.
\]
\end{defi}
\begin{prop}
Let $G$ be a group and let $A,B\in \calR_{G\times T}$ be linked. Then the following statements hold:
\begin{enumerate}
    \item $(\Gamma_A,\times_G^d)$ is a group with identity element $\widehat{e}_A$.
      \item The set $_A\Gamma_B$ is a $(\Gamma_A, \Gamma_B)$-biset which is both left and right transitive, as well as free on both sides.
      \item Any element $\gamma \in {}_A\Gamma_B$ induces an isomorphism of groups
    \[
    \gamma : \Gamma_B \xrightarrow{\sim} \Gamma_A,
    \]
    given by the map sending $x\in \Gamma_B$ to $\frac{1}{|T:K||T:L|}\gamma\cdot x\cdot\gamma^{op}$, where $K:=k_2(A)$ and $L:=k_2(B)$.
        \item The functor
    \[
    k\big[ {}_A\Gamma_B\big] \otimes_{k\Gamma_B} - 
    : k\Gamma_B\text{-mod} \longrightarrow k\Gamma_A\text{-mod}
    \]
    is an equivalence of categories. In particular, it yields a canonical bijection
    \[
    \mathrm{Irr}(k\Gamma_B) \xrightarrow{\sim} \mathrm{Irr}(k\Gamma_A).
    \]
       \item There is an isomorphism of groups
\(
(\Gamma_A,\times_{G}^{d}) \longrightarrow \Out_T(A),
\)
where $\Out_T(A) = \Aut_T(A)/ \Inn(A)$ is the group of outer automorphisms of \(A\) fixing the second component.
\end{enumerate}
\end{prop}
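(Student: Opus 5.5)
The plan is to translate everything into automorphisms of $A$. By \ref{covering}, a covering subgroup $D\le GGT$ with $R_D=B$ is the same as a pair $(B,\alpha)$. The condition $L_D=A$ says exactly that
\[
\phi_D\colon B\to A,\qquad (g,t)\mapsto(\alpha(g,t),t),
\]
is a surjective homomorphism. It is also injective, since $k_1(D)=1$ forces $\ker\alpha\cap(1\times T)=1$ and $k_2(D)=1$. Hence $\phi_D$ is an isomorphism $B\to A$ commuting with projection to $T$. Conversely, every such isomorphism (call the set $\mathrm{Iso}_T(B,A)$) gives a covering subgroup $\{(p_1\phi(g,t),g,t)\mid (g,t)\in B\}$ with invariants $(A,B)$. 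The dictionary between subgroups and isomorphisms is therefore:
\begin{itemize}
\item By Lemma~\ref{lem: productofcovering}, if $R_{D_1}=L_{D_2}$ then $D_1\ast D_2$ corresponds to $\phi_{D_1}\circ\phi_{D_2}$.
\item $D^{\mathrm{op}}$ corresponds to $\phi_D^{-1}$.
\item $E_A$ corresponds to $\id_A$.
\end{itemize}

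First I would pin down which $\phi$ give the same class. Conjugating $D$ by $(x,y,s)\in GGT$ leaves $L_D$ and $R_D$ unchanged, because $A$ and $B$ are normal in $G\times T$ by Lemma~\ref{lem: Anormal}. It replaces $\phi_D$ by $c_{(x,1)}\circ\phi_D\circ c_{(y,1)}^{-1}$, since $T$ is abelian and conjugation by $(1,s)$ is trivial. Choosing $(x,t)\in A$, which is possible because $p_1(A)=G$, we get $c_{(x,1)}|_A=c_{(x,t)}|_A\in\Inn(A)$, and similarly on $B$. So conjugacy classes of covering $D$ with invariants $(A,B)$ correspond bijectively to double cosets $\Inn(A)\backslash\mathrm{Iso}_T(B,A)/\Inn(B)$. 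The reverse direction comes from reading the same computation backwards. Since $A$ and $B$ are reduced, Lemma~\ref{lem: reduced-invariants} shows every such $[[GGT/D]]$ is nonzero. By \ref{not: romero-generalT}, distinct conjugacy classes of non-$\ast$-factorable covering subgroups give distinct, linearly independent classes. Therefore ${}_A\Gamma_B$ is in bijection with these double cosets. When $B=A$, they are the cosets of $\Out_T(A)$, because $\Inn(A)\trianglelefteq\Aut_T(A)$.

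Next I would do the coefficient bookkeeping, which is the step I expect to need the most care. For $D_1,D_2$ with $R_{D_1}=L_{D_2}=C$, Lemma~\ref{lem: product} gives
\[
\Big[\frac{GGT}{D_1}\Big]\times^d_G\Big[\frac{GGT}{D_2}\Big]=|T:k_2(C)|\,\Big[\frac{GGT}{D_1\ast D_2}\Big].
\]
Linked subgroups are isomorphic over $T$, so $k_2$ agrees along a linkage class; call this common value $K$. Hence the normalized elements $\tfrac{1}{|T:K|}[[GGT/D]]$ with $L_D=R_D=A$ multiply exactly as $\Out_T(A)$ does. This gives (1) and (5) at once: the identity is $\widehat e_A$ via $E_A$, and the inverse comes from $D^{\mathrm{op}}$.

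For (2), I would let $\Gamma_A$ and $\Gamma_B$ act on ${}_A\Gamma_B$ by post- and precomposition. The same computation shows the product of normalized elements lands in ${}_A\Gamma_B$, and the action is well defined on double cosets. For transitivity, given $\phi,\phi'$, the element $\phi'\phi^{-1}\in\Aut_T(A)$ carries one to the other. For freeness, suppose $\psi\phi=c_a\phi c_b$. Then $\psi=c_a c_{\phi(b)}$ is inner, so $\psi$ is trivial in $\Out_T(A)$; the right action is symmetric. For (3), $x\mapsto\tfrac{1}{|T:K||T:L|}\gamma x\gamma^{\mathrm{op}}$ is conjugation $\psi\mapsto\phi\psi\phi^{-1}$, with the normalizing factors absorbing the two coefficients from Lemma~\ref{lem: product}. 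This map is well defined modulo inner automorphisms and is a group isomorphism. For (4), a biset that is free and transitive on both sides makes $k[{}_A\Gamma_B]$ isomorphic to $k\Gamma_A$, twisted on the right by the isomorphism of (3). Tensoring with it is then restriction along that isomorphism, which is an equivalence and induces the bijection on $\Irr$.
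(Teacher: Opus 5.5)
Your proof is correct, and it rests on the same dictionary the paper uses for part (5). A covering $D$ with $L_D=A$, $R_D=B$ is the graph of a $T$-isomorphism $\phi_D$ (the paper's $\sigma_D$). Star products become compositions, and $GGT$-conjugation becomes composition with inner automorphisms on either side.

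The difference is organization. The paper proves (1) by direct star-product computations ($E_A\ast D=D$, $L_{E\ast F}=R_{E\ast F}=A$, $D\ast D^{\mathrm{op}}=E_A$), proves (5) only for $\Gamma_A$, and defers (2)--(4) to \cite[Section 6.1]{BC}. You set up the dictionary for an arbitrary linked pair and identify ${}_A\Gamma_B$ with $\Inn(A)\backslash\mathrm{Iso}_T(B,A)/\Inn(B)$. Parts (1)--(4) then become formal consequences, so your argument is self-contained.

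You are also more careful than the paper about injectivity of $\Gamma_A\to\Out_T(A)$. Distinct double cosets give distinct elements because the classes are nonzero, hence not $\ast$-factorable by Proposition~\ref{zeros}, hence linearly independent by \ref{not: romero-generalT}. You should cite Proposition~\ref{zeros} explicitly for the middle step.

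Two side remarks in your proposal are inaccurate, though neither affects the argument.

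First, $k_1(D)=1$ does not force $\ker\alpha\cap(1\times T)=1$; for $D=E_A$ this intersection is $1\times k_2(A)$. The correct reason $\phi_D$ is injective is that $\ker\phi_D=\{(g,1)\in B\mid \alpha(g,1)=1\}=k_2(D)\times 1=1$.

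Second, a $T$-isomorphism $B\to A$ need not carry $1\times k_2(B)$ onto $1\times k_2(A)$, so $k_2$ is not constant on a linkage class. Here is an example:
\begin{itemize}
\item Take $G=\langle z\rangle\cong\mathbb{Z}/4$ and $T=\langle k\rangle\times\langle s\rangle\cong\mathbb{Z}/4\times\mathbb{Z}/2$.
\item Let $A=\langle(1,k),(z,s)\rangle$ and $B=\langle(1,ks),(z,s)\rangle$. Both have $k_1=\langle z^2\rangle$ and $|k_2|=4$.
\item Both are reduced by Corollary~\ref{desTabel}: an admissible $\theta$ would have to be defined on $M=T$ and would send the involution $s$ into $\{z,z^3\}$, which contains no involution.
\item They are linked by the $T$-isomorphism $B\to A$ given by $(1,ks)\mapsto(z,ks)$ and $(z,s)\mapsto(z,s)$.
\item Yet $k_2(A)=\langle k\rangle\neq\langle ks\rangle=k_2(B)$.
\end{itemize}
What is true is $|k_2(A)|=|A|/|G|=|k_2(B)|$. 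Your bookkeeping does not even need that, since each coefficient $|T:k_2(C)|$ from Lemma~\ref{lem: product} is cancelled by the normalization attached to the same subgroup $C$.
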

\begin{proof}
The proofs of statements (2), (3), (4) follow similarly to those in~\cite[Section 6.1]{BC} with necessary modifications. We provide proofs for (1) and (5).

\smallskip

\noindent 1. First $E_A * D = D$ for any covering subgroup $D$ with $L_D = R_D = A$, and hence $\widehat{e}_A$ is the identity element of $\Gamma_A$.

Let $E$ and $F$ be covering subgroups of $GGT$ such that $L_E = R_E = L_F = R_F = A$. Then
\[
E*F = \{(g_1,g_2,t)\mid \exists\, h \in G \text{ such that } (g_1,h,t)\in E \text{ and } (h,g_2,t)\in F\}.
\]
It is clear that $L_{E*F} \leq A$. Conversely, let $(g,t)\in A$. Then there exists $h \in G$ such that $(g,h,t)\in E$, hence $(h,t)\in A$, and therefore $(h,g',t)\in F$ for some $g'\in G$. This implies $(g,g',t)\in E*F$, so $(g,t)\in L_{E*F}$. Thus $L_{E*F} = A$, and similarly $R_{E*F} = A$. Hence $\Gamma_A$ is closed under $\times_G^d$.

Moreover, for any such $D$, one has $D * D^{op} = E_A$, so every element admits an inverse. Finally, since by the Lemma~\ref{lem: product} we have   
\[
\Big[\Big[\frac{GGT}{E}\Big]\Big]\times_G^d \Big[\Big[\frac{GGT}{F}\Big]\Big]
= |T:K| \cdot \Big[\Big[\frac{GGT}{E*F}\Big]\Big],
\]
the coefficients can be normalized by $\frac{1}{|T:K|}$, and the group structure follows.

\smallskip

\noindent 5. Let
\(
\frac{1}{|T:K|}\Big[\Big[\frac{GGT}{D}\Big]\Big]
\)
be an element of \(\Gamma_A\). Define a map
\(
\sigma_D \colon A \to A
\)
by setting
\(
\sigma_D(h,t)=(g,t)
\)
whenever \((g,h,t)\in D\).

We first show that \(\sigma_D\) is well defined. Suppose that
\(
(g,h,t),(g',h,t)\in D.
\)
Then
\(
(g^{-1}g',1,1)\in D.
\)
Since \(A\) is reduced, the subgroup \(D\) is covering, and therefore \(g=g'\). Hence \(\sigma\) is well defined. By construction, \(\sigma_D\) fixes the second component. Moreover it is straightforward to show that $\sigma_D$ is an automorphism and the assignment $D\mapsto \sigma_D$ is a group homomorphism. 

Conversely, let \(\sigma\) be an automorphism of \(A\) fixing the second component. Define $D_{\sigma}=\{(g,h,t)\mid \sigma(h,t)=(g,t)\}$.
Then clearly \(D_{\sigma}\) is a covering subgroup of \(GGT\) with $L_D = R_D = A$.

Next, observe that two automorphisms determine conjugate subgroups of \(GGT\) whenever they differ by conjugation with an element of \(G\times T\). Since \(p_1(A)=G\), the automorphisms of \(A\) induced by conjugation with elements of \(G\times T\) are precisely the inner automorphisms of \(A\). Consequently, the above correspondence induces an isomorphism $\Gamma_A \cong \Out_T(A)$.
\end{proof}


\section{Simple modules of the Essential Algebra}


In this section we parametrize the simple modules of $\widehat{kB_T}(G)$. Following the strategy of \cite[Section 9]{BC} and \cite[Section 4]{CM}, we first relate the two decompositions of $\widehat{kB_T}(G)$ obtained in Section 5 by means of an auxiliary lemma, and then deduce the parametrization from Theorem \ref{thm: matrixalg}.

\begin{nothing}
Let $A\in\calR_{G\times T}$ and enumerate $\widetilde{A}=\{A_1,\dots,A_n\}$. By definition $\widehat{kB_T}(G)^{\widetilde{A}}$ is spanned by $\big[\big[\tfrac{GGT}{D}\big]\big]$ with
$L_D,R_D\in\widetilde{A}$. Writing $(L_D,R_D)=(A_i,A_j)$ gives the decomposition into $k$-submodules
\begin{equation}\label{eq: directsum}
  \widehat{kB_T}(G)^{\widetilde{A}}=\bigoplus_{i,j=1}^{n}k[{}_{A_i}\Gamma_{A_j}].
\end{equation}
\end{nothing}

\begin{lem}[cf.\ {\cite[Lemma 6.6]{BC}}, {\cite[Lemma 4.9]{CM}}]
\label{lem: comparing}
Let $G$ be a group and $A\in\calR_{G\times T}$. Write $\widetilde{A}=\{A_1,\dots,A_n\}$.
\begin{enumerate}
  \item[(a)] If $D$ is covering with $R_D=A$ and $B\in\calR_{G\times T}$ with $B\not\le A$, then $\big[\big[\tfrac{GGT}{D}\big]\big]\,\widehat{f}_B=0$.
  \item[(b)] $\displaystyle\bigoplus_{\widetilde{B}\le\widetilde{A}}\widehat{kB_T}(G)^{\widetilde{B}}
    =\bigoplus_{\widetilde{B}\le \widetilde{A}}\widehat{kB_T}(G)\,\widetilde{f}_B.$
  \item[(c)] The projection $\omega:\widehat{kB_T}(G)^{\widetilde{A}}\to\widehat{kB_T}(G)\,\widetilde{f}_A$, $b\mapsto b\widetilde{f}_A$, with respect to Equation~(\ref{eq: toideals}) of Theorem \ref{thm: central ipots} is a $k$-module isomorphism, with inverse the projection with respect to Equation~(\ref{eq: tosubmodules}) of Theorem \ref{thm: central ipots}.
 \item[(d)] The map $\omega$ is the direct sum, over the decomposition in Equation~(\ref{eq: directsum}), of the $k$-module isomorphisms
\[
\omega_{ij}:k[{}_{A_i}\Gamma_{A_j}]\to\widehat{f}_{A_i}\,\widehat{kB_T}(G)\,\widehat{f}_{A_j}, \qquad b\mapsto\widehat{f}_{A_i}\,b\,\widehat{f}_{A_j}.
\]
  \item[(e)] For $b_{ij}\in k[{}_{A_i}\Gamma_{A_j}]$ and $b_{lm}\in k[{}_{A_l}\Gamma_{A_m}]$,
             \[
               \omega_{ij}(b_{ij})\,\omega_{lm}(b_{lm})=
               \begin{cases}\omega_{im}(b_{ij}b_{lm}),&j=l,\\[2pt]0,&j\neq l.\end{cases}
             \]
  \item[(f)] The correspondence $k\Gamma_A\to\widehat{f}_A\,\widehat{kB_T}(G)\,\widehat{f}_A$ given by  $a\mapsto\widehat{f}_A\,a\,\widehat{f}_A$, is a $k$-algebra isomorphism.
\end{enumerate}
\end{lem}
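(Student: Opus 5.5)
The plan follows \cite[Lemma 6.6]{BC} and \cite[Lemma 4.9]{CM}. Everything rests on Theorem~\ref{thm: leftrightinvariants}, Proposition~\ref{prop: ortipots}, Proposition~\ref{prop: centipots}, Lemma~\ref{lem: f-orthogonality}, and the basis of non-$\ast$-factorable covering classes.

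\emph{Part (a).} By Lemma~\ref{lem: reduced-invariants}(1) we have $[[GGT/D]]=[[GGT/D]]\,\widehat e_{A}$. Hence $[[GGT/D]]\,\widehat f_B=[[GGT/D]]\,\widehat e_A\widehat f_B$, and this vanishes by Proposition~\ref{prop: ortipots} whenever $B\not\le A$.

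\emph{Part (b).} For the inclusion $\subseteq$, take a basis element $[[GGT/D]]$ with $R_D\in\widetilde B$, where $\widetilde B\le\widetilde A$. Write it as $\sum_{C}[[GGT/D]]\,\widetilde f_C$, using $1=\sum_C\widetilde f_C$. By (a), a term survives only if some member of $\widetilde C$ lies below $R_D$, so $\widetilde C\le\widetilde B\le\widetilde A$. For the inclusion $\supseteq$, induct on the poset of linkage classes. Since $\widehat{kB_T}(G)\widetilde f_B=\widetilde f_B\widehat{kB_T}(G)\widetilde f_B$ by centrality, and $\widetilde f_B$ is a combination of $\widehat e_C$ with $\widetilde C\le\widetilde B$, each product $[[GGT/D]]\,\widehat e_C$ is a class of $D\ast E_C$. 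Its right invariant lies below $C$, so its class lies below $\widetilde B$, and the left-hand side contains the right-hand side. Both sides are direct sums: the left by Theorem~\ref{thm: central ipots}(3), the right by (2).

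\emph{Part (c).} The map $b\mapsto b\widetilde f_A$ is the projection of $\widehat{kB_T}(G)^{\widetilde A}$ onto the $\widetilde A$-summand. Compare (b) for $\widetilde A$ with (b) for the downset $\{\widetilde B<\widetilde A\}$ (an induction on downsets makes this legitimate). The two $k$-modules $\bigoplus_{\widetilde B\le\widetilde A}$ and $\bigoplus_{\widetilde B<\widetilde A}$ agree under both decompositions. So the quotients $\widehat{kB_T}(G)^{\widetilde A}$ and $\widehat{kB_T}(G)\widetilde f_A$ are each complements of the same submodule inside the same module. Each projection therefore restricts to an isomorphism, and the two are mutually inverse.

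\emph{Part (d).} Apply Theorem~\ref{thm: leftrightinvariants}: for $L_D=A_i$ and $R_D=A_j$ we get $[[D]]\widetilde f_A=\widehat f_{A_i}[[D]]\widehat f_{A_j}$. The other terms vanish because $\widehat f_{A_l}[[D]]$ requires $A_l\le A_i$, and linked subgroups have equal order. Thus $\omega$ maps $k[{}_{A_i}\Gamma_{A_j}]$ into $\widehat f_{A_i}\widehat{kB_T}(G)\widehat f_{A_j}$. These targets form a direct sum decomposition of $\widetilde f_A\widehat{kB_T}(G)$ by Proposition~\ref{prop: centipots}, so bijectivity of $\omega$ gives bijectivity of each $\omega_{ij}$.

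\emph{Part (e).} For $j\ne l$ the product vanishes by orthogonality of $\widehat f_{A_j}$ and $\widehat f_{A_l}$. For $j=l$, write $\widehat f_{A_i}b_{ij}\widehat f_{A_j}b_{jm}\widehat f_{A_m}$. By Theorem~\ref{thm: leftrightinvariants}, $b_{ij}\widehat f_{A_j}=\widehat f_{A_i}b_{ij}$, so the middle idempotent can be moved left and absorbed into $\widehat f_{A_i}$ by idempotency. This yields $\widehat f_{A_i}b_{ij}b_{jm}\widehat f_{A_m}=\omega_{im}(b_{ij}b_{jm})$. Here $b_{ij}b_{jm}$ lies in $k[{}_{A_i}\Gamma_{A_m}]$, by the invariant computation for star products of covering subgroups used in the proof of the linkage relation.

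\emph{Part (f).} This is the case $i=j=m$ of (e), combined with $k\Gamma_A=k[{}_A\Gamma_A]$ as $k$-modules. The normalizing scalar $1/|T:K|$ must be tracked via Lemma~\ref{lem: product}. Unitality follows because $\widehat e_A$ maps to $\widehat f_A\widehat e_A\widehat f_A=\widehat f_A$.

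The main obstacle is the reverse inclusion in (b) together with (c). Both need a careful induction over linkage classes. They also rely on the fact that right invariants only decrease under multiplication by the $\widehat e_C$, which must be checked directly from the description of $D\ast E_C$.
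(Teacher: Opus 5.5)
Your proposal is correct and takes essentially the paper's route. Part (a) is argued exactly as in the paper, via $[[GGT/D]]=[[GGT/D]]\,\widehat e_A$ and Proposition~\ref{prop: ortipots}. For (b)--(f) the paper only cites \cite[Lemma 6.6]{BC} and \cite[Lemma 4.9]{CM}, and your reconstruction follows the cited pattern: the two filtrations by downsets of linkage classes coincide, $\omega$ is an isomorphism because both summands are complements of the same submodule, and (d)--(f) come from the Peirce decomposition by the $\widehat f_{A_i}$ together with Theorem~\ref{thm: leftrightinvariants}. One small simplification: no induction is needed in (b) or (c), because your argument for (b) applies verbatim to any downset of linkage classes, in particular to $\{\widetilde B<\widetilde A\}$.
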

\begin{proof}
(a) We have $[[GGT/D]]\widehat{f}_B = [[GGT/D]]\widehat{e}_A\widehat{f}_B = 0$ by Lemma \ref{lem: reduced-invariants} and Proposition \ref{prop: ortipots}. The remaining parts are proved as in \cite[Lemma 6.6]{BC} and \cite[Lemma 4.9]{CM}.
\end{proof}

\begin{theorem}{\cite[Theorem 6.2]{BC}}
\label{thm: matrixalg}
Let $G$ be a finite group and $T$ be a finite abelian group. Let $k$ be a commutative unitary ring such that $|T|\in k^{\times}$. Then there is a $k$-algebra isomorphism
\[
    \bigoplus_{\widetilde{A}\in \calR_{G\times T}/\sim}\Mat_{|\widetilde{A}|}(k\Gamma_A)\xlongrightarrow{\sim} \widehat{kB_T}(G).
\]
Moreover, for every linkage class $\widetilde{A}=\{A_1,A_2,\cdots A_n\}\in \calR_{G\times T}/\sim$ and for each $i = 1,2, . . . , |\widetilde{A}|$, the above isomorphism sends the idempotent matrix
\[
e_i = \text{diag}(0,\ldots , 0, 1, 0,\ldots , 0) \in \Mat_{|\widetilde{A}|}(k\Gamma_A)
\]
in the $\widetilde{A}$-component to the idempotent $\widehat{f}_{A_i}\in \widehat{kB_T}(G)$.
\end{theorem}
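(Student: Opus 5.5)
The plan is to assemble the isomorphism block by block from the decomposition of Theorem~\ref{thm: central ipots} and the comparison maps of Lemma~\ref{lem: comparing}. By Theorem~\ref{thm: central ipots}(2), $\widehat{kB_T}(G)=\bigoplus_{\widetilde{A}}\widetilde{f}_A\,\widehat{kB_T}(G)$ is a direct sum of two-sided ideals. Each summand $\widetilde{f}_A\widehat{kB_T}(G)$ is a unital algebra with identity $\widetilde{f}_A$. So it suffices to produce, for a fixed class $\widetilde{A}=\{A_1,\dots,A_n\}$, a $k$-algebra isomorphism $\Mat_n(k\Gamma_A)\to \widetilde{f}_A\widehat{kB_T}(G)$ sending $e_i$ to $\widehat f_{A_i}$. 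Since $\widetilde f_A=\sum_i\widehat f_{A_i}$ with the $\widehat f_{A_i}$ orthogonal idempotents (Proposition~\ref{prop: ortipots}), we have a Peirce decomposition
\[
\widetilde{f}_A\widehat{kB_T}(G)=\widetilde f_A\widehat{kB_T}(G)\widetilde f_A=\bigoplus_{i,j}\widehat f_{A_i}\widehat{kB_T}(G)\widehat f_{A_j}.
\]
By Lemma~\ref{lem: comparing}(d), each Peirce block is identified via $\omega_{ij}$ with $k[{}_{A_i}\Gamma_{A_j}]$.

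To build matrix units, fix for each $i$ an element $\gamma_i\in{}_{A_i}\Gamma_{A}$, taking $A=A_1$ and $\gamma_1=\widehat e_{A}$. These exist because the $A_i$ are linked to $A$, and by Lemma~\ref{lem: reduced-invariants} the corresponding classes are nonzero. Let $\gamma_i^{op}\in{}_A\Gamma_{A_i}$ be the class of the opposite subgroup. Using Lemma~\ref{lem: product} and $D\ast D^{op}=E_{L_D}$, we get $\gamma_i\gamma_i^{op}=|T:k_2(A_i)|\,\widehat e_{A_i}$. Note that $|k_2(A_i)|=|A_i|/|G|=|A|/|G|=|k_2(A)|$ by the order equality of linked subgroups, so all these scalars equal $c:=|T:K|$, with $K=k_2(A)$, and $c$ is invertible in $k$.

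The map $\Phi$ on the $\widetilde A$-component is defined by sending the matrix with a single entry $x\in\Gamma_A$ in position $(i,j)$ to
\[
\omega_{ij}\bigl(c^{-1}\gamma_i\, x\,\gamma_j^{op}\bigr)=\widehat f_{A_i}\,\bigl(c^{-1}\gamma_i x\gamma_j^{op}\bigr)\,\widehat f_{A_j},
\]
where $x$ is viewed as the normalized class $c^{-1}[[GGT/D]]$. By Proposition~3(2)--(3) (the biset properties), the map $x\mapsto c^{-1}\gamma_i x\gamma_j^{op}$ is a bijection $\Gamma_A\to {}_{A_i}\Gamma_{A_j}$ up to the scalar $c^{\pm1}$. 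This is because ${}_{A_i}\Gamma_{A_j}$ is a free transitive $\Gamma_{A_i}$-set and $\gamma_i$ intertwines $\Gamma_A\cong\Gamma_{A_i}$. Combined with $\omega_{ij}$ being an isomorphism, $\Phi$ is a $k$-linear bijection onto the $(i,j)$ Peirce block, and hence onto the whole summand.

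Multiplicativity follows from Lemma~\ref{lem: comparing}(e): $\omega_{ij}(b)\omega_{lm}(b')$ vanishes unless $j=l$, and equals $\omega_{im}(bb')$ otherwise. When $j=l$, the middle factor satisfies $\gamma_j^{op}\gamma_j=c\,\widehat e_A$, which is absorbed by the normalizations. This gives $\Phi(x E_{ij})\Phi(yE_{jm})=\Phi(xyE_{im})$, where the product $xy$ is taken in the group $\Gamma_A$ with its normalized product from Proposition~3(1). For $x=\widehat e_A$ and $i=j$, we get $\omega_{ii}(c^{-1}\gamma_i\gamma_i^{op})=\omega_{ii}(\widehat e_{A_i})=\widehat f_{A_i}\widehat e_{A_i}\widehat f_{A_i}=\widehat f_{A_i}$, so $e_i\mapsto\widehat f_{A_i}$ as claimed. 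Summing $\Phi$ over the linkage classes and using the ideal decomposition then yields the theorem.

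The main obstacle is the bookkeeping of the scalar factors $|T:k_2(\cdot)|$ coming from Lemma~\ref{lem: product}. I would first verify the identity $c^{-1}\gamma_i x \gamma_j^{op}\cdot c^{-1}\gamma_j y\gamma_m^{op}=c^{-1}\gamma_i(xy)\gamma_m^{op}$ directly at the level of star products. For this I would use that every intermediate product involves covering subgroups whose relevant $H$-projections are full, so that each product contributes exactly the factor $c$, by the $k_2(AB)$ computation used in Lemma~\ref{ipots1}. A secondary point is checking that $\omega_{ij}$ is compatible with these products even though elements of $k[{}_{A_i}\Gamma_{A_j}]$ are not themselves supported in a single Peirce block. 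This is exactly what Lemma~\ref{lem: comparing}(e) guarantees, so I will rely on it rather than reprove it.
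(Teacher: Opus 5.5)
Your architecture matches the paper's proof: reduce to one linkage class via $\widetilde f_A$, transport $k\Gamma_A$ into each $k[{}_{A_i}\Gamma_{A_j}]$ by two-sided multiplication with linking elements, compose with $\omega_{ij}$, and get multiplicativity from Lemma~\ref{lem: comparing}(e). However, the scalar bookkeeping you singled out as the main obstacle is wrong, and the identity you propose to verify is false.

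\textbf{The error.} Write $\gamma_i=[[GGT/D_i]]$ with $L_{D_i}=A_i$ and $R_{D_i}=A$. You claim $\gamma_i\gamma_i^{\mathrm{op}}=|T:k_2(A_i)|\,\widehat e_{A_i}$. In fact Lemma~\ref{lem: product}, together with $D_i\ast D_i^{\mathrm{op}}=E_{A_i}$, gives $\gamma_i\gamma_i^{\mathrm{op}}=|T:k_2(A)|\,[[GGT/E_{A_i}]]$. Moreover $[[GGT/E_{A_i}]]=c\,\widehat e_{A_i}$, because $\widehat e_{A_i}$ already carries the factor $|T:k_2(A_i)|^{-1}$. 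Hence $\gamma_i\gamma_i^{\mathrm{op}}=c^2\,\widehat e_{A_i}$, and likewise $\gamma_j^{\mathrm{op}}\gamma_j=c^2\,\widehat e_A$. So for $x,y\in\Gamma_A$,
\[
\bigl(c^{-1}\gamma_i x\gamma_j^{\mathrm{op}}\bigr)\bigl(c^{-1}\gamma_j y\gamma_m^{\mathrm{op}}\bigr)=\gamma_i\,xy\,\gamma_m^{\mathrm{op}}=c\cdot\bigl(c^{-1}\gamma_i\,xy\,\gamma_m^{\mathrm{op}}\bigr).
\]
Thus your $\Phi$ is $c$ times a multiplicative map. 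For $i\neq 1$ it sends $e_i$ to $c\,\widehat f_{A_i}$, so already $\Phi(e_i)^2=c\,\Phi(e_i)$, and $\Phi$ is not an algebra homomorphism in general. The choice $\gamma_1=\widehat e_A$ adds a further inconsistency. $\widehat e_A=c^{-1}[[GGT/E_A]]$ is a normalized element, not an element of ${}_A\Gamma_A$ like the other $\gamma_i$, and it gives $\Phi(e_1)=c^{-1}\widehat f_A$.

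\textbf{The repair.} Normalize both linking elements: set $u_i:=c^{-1}\gamma_i$ and $\bar u_i:=c^{-1}\gamma_i^{\mathrm{op}}$, with $\gamma_1=[[GGT/E_A]]$, so that $u_1=\bar u_1=\widehat e_A$. Then $u_i\bar u_i=\widehat e_{A_i}$ and $\bar u_i u_i=\widehat e_A$ hold exactly. Define the block map by $x\mapsto\omega_{ij}(u_i\,x\,\bar u_j)$ with no further scalar; equivalently, put $c^{-2}$ rather than $c^{-1}$ in front of $\gamma_i x\gamma_j^{\mathrm{op}}$. Everything then goes through:
\begin{itemize}
\item For $x=c^{-1}[[GGT/D]]$ one computes $u_i x\bar u_j=c^{-1}[[GGT/(D_i\ast D\ast D_j^{\mathrm{op}})]]$, which lies in $k[{}_{A_i}\Gamma_{A_j}]$.
\item The inverse on the block is $b\mapsto\bar u_i\,b\,u_j$, using Lemma~\ref{lem: reduced-invariants}(1).
\item Multiplicativity reduces to $\bar u_j u_j=\widehat e_A$.
\item $e_i\mapsto\omega_{ii}(u_i\widehat e_A\bar u_i)=\omega_{ii}(\widehat e_{A_i})=\widehat f_{A_i}$.
\end{itemize}
This is exactly the paper's normalization $y_i=|T:k_2(A_i)|^{-1}x_i$, $\bar y_i=|T:k_2(A)|^{-1}x_i^{\mathrm{op}}$ with $\sigma_{ij}(a)=\bar y_i\,a\,y_j$. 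With that correction your proof coincides with the paper's.
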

\begin{proof}
By Equation~(\ref{eq: toideals}) it suffices to construct a $k$-algebra isomorphism $\Mat_n(k\Gamma_A)\xrightarrow{\sim}\widehat{kB_T}(G)\,\widetilde{f}_A$ sending $e_i$ to $\widehat{f}_{A_i}$ for a fixed $\widetilde{A}\in\calR_{G\times T}$.

For each $i\in \{1,2,\ldots, n\}$, choose $D_i\le GGT$ with $\big[\big[\tfrac{GGT}{D_i}\big]\big]\neq0$, $L_{D_i}=A$, $R_{D_i}=A_i$,
set $x_i:=\big[\big[\tfrac{GGT}{D_i}\big]\big]$, and put
\[
  y_i:=\tfrac{1}{|T:k_2(A_i)|}\,x_i,\qquad \bar y_i:=\tfrac{1}{|T:k_2(A)|}\,x_i^{op}.
\]
By Lemma~\ref{lem: product},
\begin{equation}\label{eq: yybar}
  y_i\times_G^d \bar y_i=\widehat{e}_A,\qquad \bar y_i\times_G^d y_i=\widehat{e}_{A_i}.
\end{equation}
Define $\sigma_{ij}:k\Gamma_A\to k[{}_{A_i}\Gamma_{A_j}]$, $a\mapsto\bar y_i\,a\,y_j$; by Lemma~\ref{lem: product}
this lands in $k[{}_{A_i}\Gamma_{A_j}]$, and $b\mapsto y_i\,b\,\bar y_j$ is its inverse by Equation~(\ref{eq: yybar}),
since $\widehat{e}_A$ is the identity of $\Gamma_A$. Moreover, for $a,a'\in k\Gamma_A$,
\begin{equation}\label{eq: sigmamult}
  \sigma_{ij}(a)\,\sigma_{jl}(a')=\bar y_i\,a\,(y_j\,\bar y_j)\,a'\,y_l
  =\bar y_i\,a\,\widehat{e}_A\,a'\,y_l=\bar y_i\,(aa')\,y_l=\sigma_{il}(aa'),
\end{equation}
using $y_j\times_G^d\bar y_j=\widehat{e}_A$. Taking the direct sum over $i,j$ and using Equation~(\ref{eq: directsum}) yields a $k$-module isomorphism $\sigma:\Mat_n(k\Gamma_A)\xrightarrow{\sim}\widehat{kB_T}(G)^{\widetilde{A}}$.

Let $\omega,\omega_{ij}$ be as in Lemma~\ref{lem: comparing}. Then
$\rho:=\omega\circ\sigma:(a_{ij})\mapsto\sum_{i,j}\omega_{ij}(\sigma_{ij}(a_{ij}))$ is a $k$-module isomorphism
onto $\widehat{kB_T}(G)\,\widetilde{f}_A$. Combining Lemma~\ref{lem: comparing}(e) with Equation~(\ref{eq: sigmamult}),
\begin{eqnarray*}
  \rho\big((a_{ij})\big)\,\rho\big((a'_{lm})\big)
  &=&\sum_{i,j,m}\omega_{im}\!\big(\sigma_{ij}(a_{ij})\,\sigma_{jm}(a'_{jm})\big)\\
  &=&\sum_{i,m}\omega_{im}\!\Big(\sigma_{im}\big(\textstyle\sum_j a_{ij}a'_{jm}\big)\Big)\\
  &=&\rho\big((a_{ij})(a'_{lm})\big),
\end{eqnarray*}
so $\rho$ is a $k$-algebra isomorphism. Finally
$\rho(e_i)=\omega_{ii}(\sigma_{ii}(\widehat{e}_A))=\widehat{f}_{A_i}\,\widehat{e}_{A_i}\,\widehat{f}_{A_i}=\widehat{f}_{A_i}$.
\end{proof}

\begin{theorem}[cf. \ {\cite[Section 9.1]{BC}}]\label{thm: simples}
Let $G$ be a finite group and $T$ be a finite abelian group. Let $k$ be a commutative unitary ring such that $|T|\in k^{\times}$. There is a bijective correspondence between the isomorphism classes of simple $\widehat{kB_T}(G)$-modules and the set of pairs $(A,[V])$, where $A$ runs over representatives of linkage classes of reduced elements in $\calR_{G\times T}$, and $[V]$ runs over isomorphism classes of simple $k\Gamma_A$-modules. \\
This correspondence is given by associating to $(A,[V])$ the $\widehat{kB_T}(G)$-module
    \[
   \widehat{kB_T}(G)\,\widehat{f}_A \otimes_{k\Gamma_A} V.
    \]
\end{theorem}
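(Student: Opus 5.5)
The plan is to deduce the statement from Theorem~\ref{thm: matrixalg} by Morita theory. Theorem~\ref{thm: central ipots} and Theorem~\ref{thm: matrixalg} decompose $\widehat{kB_T}(G)$ as the product of the algebras $\widehat{kB_T}(G)\widetilde f_A\cong \Mat_{n}(k\Gamma_A)$, where $n=|\widetilde A|$. Every simple module is therefore annihilated by all but exactly one central idempotent $\widetilde f_A$. So the simple $\widehat{kB_T}(G)$-modules are the disjoint union, over $\widetilde A\in\calR_{G\times T}/\sim$, of the simple modules of the blocks $\widehat{kB_T}(G)\widetilde f_A$. It thus suffices to treat one linkage class and to show that $V\mapsto \widehat{kB_T}(G)\widehat f_A\otimes_{k\Gamma_A}V$ is a bijection from $\Irr(k\Gamma_A)$ onto the simple modules of that block.

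Fix $\widetilde A=\{A_1=A,\dots,A_n\}$. By Theorem~\ref{thm: matrixalg}, $\widehat f_A$ corresponds to the matrix unit $e_1$, and $e_1\Mat_n(k\Gamma_A)e_1\cong k\Gamma_A$. This is compatible with Lemma~\ref{lem: comparing}(f), which identifies $\widehat f_A\widehat{kB_T}(G)\widehat f_A$ with $k\Gamma_A$ via $a\mapsto \widehat f_A a\widehat f_A$. We use this identification to make $\widehat{kB_T}(G)\widehat f_A$ a right $k\Gamma_A$-module.

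Under the isomorphism, $\widehat{kB_T}(G)\widehat f_A=\widehat{kB_T}(G)\widetilde f_A\widehat f_A$ becomes the first column $\Mat_n(k\Gamma_A)e_1\cong (k\Gamma_A)^n$. This column is a progenerator, since $e_1$ is a full idempotent: $\sum_i e_{i1}e_1e_{1i}=1$. Hence the functor $M\mapsto Me_1\otimes_{e_1Me_1}-$ is the standard Morita equivalence $k\Gamma_A\text{-mod}\to\Mat_n(k\Gamma_A)\text{-mod}$. Its quasi-inverse is $W\mapsto e_1W=\widehat f_AW$, and the natural isomorphism $\widehat f_A(\widehat{kB_T}(G)\widehat f_A\otimes_{k\Gamma_A}V)\cong V$ holds. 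Morita equivalences preserve and reflect simplicity and isomorphism classes. Therefore $V\mapsto \widehat{kB_T}(G)\widehat f_A\otimes_{k\Gamma_A}V$ is a bijection from $\Irr(k\Gamma_A)$ onto the simple modules of the block, viewed as $\widehat{kB_T}(G)$-modules on which the other $\widetilde f_B$ act by zero.

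Independence from the chosen representative is handled by part (4) of the proposition on $\Gamma_A$. For linked $A,B$, the biset ${}_A\Gamma_B$ gives a canonical bijection $\Irr(k\Gamma_B)\cong\Irr(k\Gamma_A)$. The resulting modules are isomorphic because left multiplication by $y_i$ and $\bar y_i$ intertwines $\widehat{kB_T}(G)\widehat f_{A_i}$ with $\widehat{kB_T}(G)\widehat f_A$, using Equation~(\ref{eq: yybar}). Since the statement fixes representatives, this is only a remark.

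The main obstacle is the bookkeeping needed to verify that the $k\Gamma_A$-action on $\widehat{kB_T}(G)\widehat f_A$ induced by Lemma~\ref{lem: comparing}(f) agrees with the one transported through the isomorphism $\rho$ of Theorem~\ref{thm: matrixalg}. This holds because $\rho$ restricted to the $(1,1)$ entry is exactly $\omega_{11}\circ\sigma_{11}$. With the choice $D_1=E_A$, the elements $y_1$ and $\bar y_1$ both equal $\widehat e_A$, so $\sigma_{11}$ is the identity. Once this is checked, everything else is standard Morita theory for full matrix algebras over $k$.
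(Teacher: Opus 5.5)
Your proposal is correct and is essentially the paper's own argument. You decompose into the blocks $\widehat{kB_T}(G)\widetilde f_A$, then invoke Theorem~\ref{thm: matrixalg} with $D_1=E_A$, so that the $(1,1)$-corner is identified with $k\Gamma_A$ as in Lemma~\ref{lem: comparing}(f). Finally you apply the Morita equivalence given by the column $\Mat_{|\widetilde A|}(k\Gamma_A)e_1$, which you identify with $\widehat{kB_T}(G)\widetilde f_A\widehat f_A=\widehat{kB_T}(G)\widehat f_A$ somewhat more cleanly than the paper, which passes through $\widetilde f_A\widehat{kB_T}(G)\widetilde f_A$.

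The only slip is in your optional remark on changing representatives. There the intertwiners between $\widehat{kB_T}(G)\widehat f_A$ and $\widehat{kB_T}(G)\widehat f_{A_i}$ are right, not left, multiplication by $y_i$ and $\bar y_i$, using $\widehat f_A y_i=y_i\widehat f_{A_i}$ from Theorem~\ref{thm: leftrightinvariants}.
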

\begin{proof}
By Equation~(\ref{eq: toideals}), every simple module of the essential algebra belongs to exactly one block
and hence the set of isomorphism classes of simple modules is in bijection with the set of pairs $(A,[V])$, where $A\in \calR_{G\times T}/\sim$ and $[V]$ is the isomorphism class of a simple module over the algebra $\widehat{kB_T}(G)\widetilde{f}_A$. By Theorem~\ref{thm: matrixalg}, the composition $\omega\circ\sigma$ yields an isomorphism of $k$-algebras
\[
\Mat_{|\widetilde{A}|}(k\Gamma_A)
\cong
\widehat{kB_T}(G)\widetilde{f}_A.
\]
Therefore, it induces a bijection
\[
\Irr(\Mat_{|\widetilde{A}|}(k\Gamma_A))
\longrightarrow
\Irr(\widehat{kB_T}(G)\widetilde{f}_A).
\]
Since a matrix algebra is Morita equivalent to its coefficient algebra, the simple modules of $\Mat_{|\widetilde{A}|}(k\Gamma_A)$ are naturally in bijection with the simple modules of $k\Gamma_A$. Let $A\in\calR_{G\times T}$ be a representative of its linkage class and let $[V]\in\Irr(k\Gamma_A)$. Writing $A_i=A$ in the enumeration of the members of $\widetilde{A}$ used in Theorem~\ref{thm: matrixalg}, let $e_i=\text{diag}(0,\ldots,0,1,0,\ldots,0)$. Identifying $k\Gamma_A$ with the corner algebra $e_i\Mat_{|\widetilde{A}|}(k\Gamma_A)e_i$ via the map $a\mapsto e_iae_i$, the Morita equivalence is given by tensoring with the bimodule $\Mat_{|\widetilde{A}|}(k\Gamma_A)e_i$. Hence the simple module $V$ induces to the simple $\Mat_{|\widetilde{A}|}(k\Gamma_A)$-module $\Mat_{|\widetilde{A}|}(k\Gamma_A)\otimes_{k\Gamma_A}V$. Finally, choosing $D_i=E_A$ in the construction of the isomorphism $\omega\circ\sigma$ from the proof of Theorem~\ref{thm: matrixalg}, this simple module is transported to the simple module $\widetilde{f}_A\widehat{kB_T}(G)\widetilde{f}_A \otimes_{k\Gamma_A}V$. Since $\widetilde{f}_A$ is a central idempotent of $\widehat{kB_T}(G)$, this is equal to $\widehat{kB_T}(G)\widetilde{f}_A
\otimes_{k\Gamma_A}V = \widehat{kB_T}(G)\widehat{f}_A
\otimes_{k\Gamma_A}V$, which proves the stated parametrization.
\end{proof}

\section{On the group of $T$-fixing automorphisms}
In this section, we describe the group $\Aut_T(G)$ as an extension group and completely determine the structure of $\Gamma_A$ in a special case. We still assume $G$ is a fixed finite group and $T$ is a fixed finite abelian group. 

\begin{nothing}
  (i)  Let $A\in \calR_{G\times T}$ with Goursat quintuple $(G, N, \varphi, K, T_0)$. Since $N$ is normal, $G$ acts on $N$ via conjugation, denoted by $\rho$. This induces a homomorphism $\bar\rho: G/N \to \Out(N)$. 

  \noindent (ii) For each $t\in T_0$, we fix an element $g_t\in G$ such that $(g_t, t)\in A$ and set $g_1 = 1$. Then any other choice of $g\in G$ with $(g, t)\in A$ satisfies $g = g_t n$ for some $n\in N$. We let $T_0$ act on $Z(N)$ on the right via $z^t = g_t^{-1}zg_t$. Clearly this is independent of choice of $g_t$ and its restriction to $K$ is trivial.

  \noindent (iii) The above action of $T_0/K$ coincides with the action through the composition $\bar\rho\circ\varphi^{-1}$. We write $Z^1(T_0, Z(N))$ for the group of 1-cocycles with respect to this action. 
\end{nothing}

\begin{prop}\label{prop: autta}
    Let $A\in\calR_{G\times T}$ with the corresponding Goursat data $(G, N, \varphi, K, T_0)$ and let $W$ denote the image of the restriction map $\Aut_T(A)\to \Aut(N)$. Then there is an exact sequence of groups
    \[
    1\longrightarrow Z^1\bigl(T_0,Z(N)\bigr)\longrightarrow\operatorname{Aut}_T(A) \xrightarrow{\ \Res\ }W\longrightarrow 1,
    \]
    where the first map sends a 1-cocycle $\gamma$ to the automorphism $\theta_\gamma(g, t) = (g\gamma(t), t)$.
\end{prop}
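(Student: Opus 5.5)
We need to check four things: every $\theta\in\Aut_T(A)$ restricts to an automorphism of $N\times 1$, so $\Res$ is defined; the map $\gamma\mapsto\theta_\gamma$ is a well-defined injective homomorphism into $\Aut_T(A)$; its image is exactly $\ker(\Res)$; and $\Res$ is onto $W$ by definition.

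\emph{Step 1: the restriction map.} An automorphism $\theta\in\Aut_T(A)$ has the form $\theta(g,t)=(\lambda(g,t),t)$ for some map $\lambda\colon A\to G$. Since $N\times 1=\{a\in A\mid p_2(a)=1\}$ and $\theta$ fixes second components, $\theta$ maps $N\times 1$ onto itself. Hence $\theta$ restricts to an automorphism of $N$, and $\Res$ is a well-defined homomorphism. Surjectivity onto $W$ holds by the definition of $W$.

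\emph{Step 2: $\theta_\gamma$ is well defined.} Here I read $\gamma(t)$ as an element of $Z(N)$; the formula $(g\gamma(t),t)$ makes sense for all $(g,t)\in A$ because $\gamma$ depends only on $t$.
\begin{itemize}
\item $\theta_\gamma(g,t)$ lies in $A$, since $\gamma(t)\in N$ and $(\gamma(t),1)\in A$.
\item It is a homomorphism. We have $\theta_\gamma((g,t)(g',t'))=(gg'\gamma(tt'),tt')$ and $\theta_\gamma(g,t)\theta_\gamma(g',t')=(g\gamma(t)g'\gamma(t'),tt')$. Equality amounts to $g'^{-1}\gamma(t)g'\cdot\gamma(t')=\gamma(tt')$, i.e. $\gamma(tt')=\gamma(t)^{t'}\gamma(t')$. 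This is exactly the right-action cocycle identity, because $g'$ is a lift of $t'$ and the action $z^{t'}=g_{t'}^{-1}zg_{t'}$ does not depend on the lift (part (ii) of the preceding paragraph). Commutativity of $Z(N)$ lets us order factors freely.
\item It is bijective: it is injective (if $(g\gamma(t),t)=(1,1)$ then $t=1$ and $\gamma(1)=1$, so $g=1$), hence bijective since $A$ is finite.
\end{itemize}
It fixes second components, and on $N\times 1$ it is the identity since $\gamma(1)=1$. So $\theta_\gamma\in\ker(\Res)$.

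\emph{Step 3: $\gamma\mapsto\theta_\gamma$ is an injective homomorphism.} We compute $\theta_\gamma\theta_\delta(g,t)=(g\delta(t)\gamma(t),t)$, so the composite is $\theta_{\gamma\delta}$, using that $Z(N)$ is abelian and that pointwise products of cocycles are cocycles. Injectivity is immediate: $\theta_\gamma=\id$ forces $\gamma(t)=1$ for all $t$.

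\emph{Step 4: the kernel of $\Res$ is contained in the image.} This is the main step. Let $\theta\in\ker(\Res)$ and define $\gamma(g,t):=g^{-1}\lambda(g,t)$. Since $\theta(g,t)$ and $(g,t)$ are both in $A$ with the same second component, $(\gamma(g,t),1)\in A$, so $\gamma(g,t)\in N$. We must show that $\gamma$ depends only on $t$ and lands in $Z(N)$.
\begin{itemize}
\item \emph{Independence of the lift.} Replace $(g,t)$ by $(gn,t)=(g,t)(n,1)$. Since $\theta$ fixes $(n,1)$, we get $\lambda(gn,t)=\lambda(g,t)n$. Hence $\gamma(gn,t)=n^{-1}\gamma(g,t)n$.
\item \emph{Centrality.} Conjugating $(n,1)$ by $(g,t)$ gives $(gng^{-1},1)$, which $\theta$ fixes. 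Applying $\theta$ therefore gives $\lambda(g,t)\,n\,\lambda(g,t)^{-1}=gng^{-1}$. So $\gamma(g,t)=g^{-1}\lambda(g,t)$ commutes with every $n\in N$, i.e. $\gamma(g,t)\in Z(N)$.
\end{itemize}
Combining the two, $\gamma(gn,t)=\gamma(g,t)$. Hence $\gamma$ factors through a function $T_0\to Z(N)$ with $\theta=\theta_\gamma$. Reversing the computation in Step 2 shows that $\gamma$ satisfies the cocycle identity. This yields exactness at $\Aut_T(A)$ and completes the proof.
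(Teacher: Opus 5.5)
Your proof is correct and follows essentially the same route as the paper: both write $\theta(g,t)=(g\,c(g,t),t)$ with $c(g,t)\in N$, show for $\theta\in\ker(\Res)$ that $c$ lands in $Z(N)$ and is constant on the cosets $g_tN$, and read the cocycle identity off the multiplicativity of $\theta$. The only difference is cosmetic. The paper first records the crossed-homomorphism identity $c_\theta(gg',tt')=c_\theta(g,t)^{g'}c_\theta(g',t')$ on all of $A$ and derives centrality and lift-independence from it, whereas you compute directly with right multiplication and conjugation by $(n,1)$, and you also spell out why $\theta_\gamma$ is an automorphism, which the paper leaves as ``easy to check''.
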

\begin{proof}
    The proof consists of two steps. First we show that restriction of an automorphism in $\Aut_T(A)$ produces a 1-cocycle of $A$ in $N$ for the conjugation action of $A$ and then show that such an automorphism is in the kernel of the restriction if and only if the induced 1-cocycle lies in the group $Z^1\bigl(T_0,Z(N))$. 

    \noindent {\bf Step 1:} Let $\theta\in \Aut_T(A)$. We write $\theta(g,t) = (\theta_1(g,t), t)$ with $\theta_1(g,t):= p_1(\theta(g,t))$. Hence the restriction can be described as $\Res : \Aut_T(A)\to \Aut(N),$ $\theta\mapsto \theta_1(- , 1)$. Also, since $(g,t)^{-1}(\theta(g,t)) = 
    (g^{-1}\theta_1(g,t), 1)\in N\times 1$, we may write
    \[
    \theta(g,t) = (g c_\theta(g,t), t)
    \]
    with $c_\theta(g,t) = g^{-1}\theta_1(g,t)\in N$. Hence we obtain a function $c_\theta: A\to N$. To see that this is a crossed homomorphism, let $(g,t), (g', t')\in A$. Then comparing $\theta(gg',tt')$ and $\theta(g,t)\theta(g',t')$, we obtain
    \begin{equation}\label{eq: c-theta}
    c_\theta(gg', tt') = c_\theta(g,t)^{g'}\cdot c_\theta(g',t'),
    \end{equation}
    as required. Notice that $c_\theta \equiv 1$ if and only of $\theta = \id_A$. 

    \noindent {\bf Step 2:} Now assume $\theta\in\ker(\Res)$ and set $c:= c_\theta$. We have $c(n,1)= 1$ for all $n\in N$. For any $(g,t)\in A$ and $n\in N$, we have $(n,1)(g,t) = (g,t)(n^g, 1)$. Applying $c$ to both sides and using Equation (\ref{eq: c-theta}), we get $c(g,t) = c(g,t)^{n^g}$. Since this holds for all $n\in N$, we must have $c(g,t)\in Z(N)$ for all $(g, t)\in A$. Moreover if $(g,t), (g',t)\in A$, then $g' = gn$ for some $n\in N$ and hence $c(g',t) = c(gn, t) = c(g,t)^{n} = c(g,t)$ by Equation (\ref{eq: c-theta}). In particular, $c(g,t)$ depends only on $t$ and not the particular choice of $g$ with $(g,t)\in A$. Hence we obtain a map $\gamma: T_0\to Z(N)$ given by $\gamma(t) := c(g,t)$ for some (and hence any) $g\in G$ with $(g,t)\in A$. Finally, Equation (\ref{eq: c-theta}) becomes 
    \[
    \gamma(tt') = \gamma(t)^{t'}\cdot \gamma(t').
    \]
    Hence $\gamma\in Z^1(T_0, Z(N))$. Conversely given $\gamma\in Z^1(T_0, Z(N))$, we define $\theta_\gamma:A\to A$ by $\theta_\gamma(g,t) := (g\gamma(t), t)$. It is easy to check that $\theta_\gamma$ is in $\Aut_T(A)$ whose restriction to to $N\times 1$ is identity. Moreover $\theta_{\gamma\gamma'} = \theta_\gamma\theta_{\gamma'}$ for any $\gamma, \gamma'\in Z^1(T_0, Z(N))$. Hence we obtain an isomorphism $Z^1(T_0, Z(N)) \cong\ker(\Res)$, given by $\gamma\mapsto \theta_\gamma$, as required.
\end{proof}
\begin{nothing}
    Notice that every inner automorphism of $A$ is $T$-fixing and hence we have $\Inn(A)\le\Aut_T(A)$. Hence once the group $\Aut_T(A)$ is determined, one may further evaluate the quotient by the inner automorphisms to obtain the group $\Gamma_A$. 

    As a corollary of the above result, we get explicit descriptions on two special cases.
\end{nothing}
\begin{coro}
    Let $K\le T$ and $A = G\times K$. The $A$ is reduced, its linkage class is $\widetilde A = \{A\}$ and there is an isomorphism of groups
    \[
    \Gamma_{G\times K} \cong \Out(G)\ltimes \Hom(K, Z(G))
    \]
    where $\Out(G)$ acts on $\Hom(K, Z(G))$ by post-composition.
\end{coro}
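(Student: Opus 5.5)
The plan is to use the Goursat data of $A=G\times K$ throughout. For $A=G\times K$ we have $p_1(A)=G$, $N=k_1(A)=G$, $T_0=p_2(A)=K=k_2(A)$, and $\varphi$ is the trivial isomorphism $G/G\to K/K$. Since $Z(G)\le G=N$, Corollary~\ref{desTabel}(2) gives $\widehat e_A\neq 0$, so $A$ is reduced. The argument then has two parts: determine the linkage class, and compute $\Gamma_A\cong\Out_T(A)$ using Proposition~\ref{prop: autta}.

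\emph{Linkage class.} Let $B\in\calR_{G\times T}$ with $B\sim A$, and let $D$ be a covering subgroup with $L_D=A$ and $R_D=B$.
\begin{enumerate}
\item First, $p_2(B)=p_3(D)=p_2(A)=K$.
\item Next, since $G\times 1\le A=p_{1,3}(D)$, for each $g\in G$ there is some $h\in G$ with $(g,h,1)\in D$, and then $(h,1)\in B$.
\item The assignment $g\mapsto h$ is well defined because $k_2(D)=1$. It is injective because $h=1$ forces $g\in k_1(D)=1$.
\item Hence $|k_1(B)|\ge|G|$, so $k_1(B)=G$. Therefore $G\times 1\le B$, and so $B=G\times p_2(B)=G\times K=A$.
\end{enumerate}
Thus $\widetilde A=\{A\}$. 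I expect this to be the main point needing care: one has to use both covering conditions on $D$, and not merely the equality of orders $|A|=|B|$.

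\emph{Structure of $\Aut_T(A)$.} Apply Proposition~\ref{prop: autta} with $N=G$ and $T_0=K$. We may choose $g_t=1$ for all $t\in K$, since $(1,t)\in A$. Hence the action of $K$ on $Z(N)=Z(G)$ is trivial, and $Z^1(K,Z(G))=\Hom(K,Z(G))$.

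For any $\sigma\in\Aut(G)$, the map $\sigma\times\id_K$ lies in $\Aut_T(A)$ and restricts to $\sigma$ on $N=G$. So $W=\Aut(G)$, and $\sigma\mapsto\sigma\times\id_K$ splits the exact sequence. This gives $\Aut_T(A)\cong\Hom(K,Z(G))\rtimes\Aut(G)$.

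To identify the action, compute
\[
(\sigma\times\id)\,\theta_\gamma\,(\sigma\times\id)^{-1}(g,t)=(g\,\sigma(\gamma(t)),t),
\]
so that $(\sigma\times\id)\,\theta_\gamma\,(\sigma\times\id)^{-1}=\theta_{\sigma\circ\gamma}$. Hence $\Aut(G)$ acts on $\Hom(K,Z(G))$ by post-composition.

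\emph{Passing to $\Out_T(A)$.} Because $K$ is abelian, conjugation by $(x,s)\in A$ is $c_x\times\id_K$, so $\Inn(A)=\{c_x\times\id\}$ is the image of $\Inn(G)$ in the complement $\Aut(G)$. Inner automorphisms of $G$ fix $Z(G)$ pointwise, so $\Inn(G)$ acts trivially on $\Hom(K,Z(G))$. It is therefore normal in the semidirect product, with quotient $\Hom(K,Z(G))\rtimes\Out(G)$. The action of $\Out(G)$ is well defined and is still post-composition. Combining this with the isomorphism $\Gamma_A\cong\Out_T(A)$ from part (5) of the preceding proposition yields the claimed isomorphism $\Gamma_{G\times K}\cong\Out(G)\ltimes\Hom(K,Z(G))$.
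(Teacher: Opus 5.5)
Your proof is correct and follows the paper's route: reducedness from Corollary~\ref{desTabel}(2), then $\Gamma_A\cong\Out_T(A)$ computed via Proposition~\ref{prop: autta}, with $g_t=1$ (so $Z^1(K,Z(G))=\Hom(K,Z(G))$) and the splitting $\sigma\mapsto\sigma\times\id_K$, followed by the quotient by $\Inn(A)$, which lies in the $\Aut(G)$ complement. The differences are minor. For the linkage class you take $L_D=A$ and show $k_1(R_D)=G$ through the injection $g\mapsto h$, whereas the paper takes $R_D=A$, writes $D=D_{\sigma,\beta}$ and computes $L_D=A$. You also justify explicitly, where the paper does not, that $\Inn(G)$ is normal in the semidirect product because it acts trivially on $\Hom(K,Z(G))$.
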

\begin{proof}
The Goursat quintuple of $A = G\times K$ is $(G, G, \id, K, K)$. Hence $A$ is reduced by Corollary \ref{desTabel}. To determine its linkage class, let $D\le GGT$ be a covering subgroup with $R_D = A$ and the corresponding pair $(A, \alpha)$. Since $\ker\alpha\cap (G\times 1) = 1$, the induced homomorphism $\sigma := \alpha(- ,1)$ is an automorphism of $G$. Also the homomorphism $\beta:= \alpha(1, -): K\to G$ has image in $Z(G)$ since $\sigma(g)\beta(t) = \beta(t)\sigma(g)$ for all $g\in G$ and $t\in K$. Thus we have 
\[
D = \{ (\sigma(g)\beta(t), g, t) \mid (g,t)\in G\times K\} =: D_{\sigma, \beta}.
\]
Furthermore we have $L_{D_{\sigma, \beta}} = A$ since for each $t\in K$, the product $\sigma(g)\beta(t)$ runs over all the elements of $G$ as $g$ runs over all the elements of $G$. Hence any covering subgroup $D$ with right invariant $A$ also has left invariant $A$. By Corollary \ref{cor:invariance}, this holds for the opposites also. Hence we conclude that $\widetilde{A} = \{A\}$.

Next we determine the group $\Aut_T(G\times K)$ using Proposition \ref{prop: autta}. First note that restriction from $\Aut_T(G\times K)$ to $\Aut(G)$ is surjective. Indeed, for any $\sigma\in\Aut(G)$, the automorphism $\sigma\times \id$ is in $\Aut_T(G\times K)$. This is also a splitting for the exact sequence of Proposition \ref{prop: autta}. 

On the other hand, the $K$-action on $Z(G)$ (which is the $T_0$-action on $Z(N)$ in the general case) is trivial since for any $t\in K$, we may choose $g_t = 1$. In particular we get $Z^1(K, Z(G)) \cong \Hom(K,Z(G))$. Now it is clear that by Proposition \ref{prop: autta}, we have
\[
\Aut_T(G\times K) \cong \Aut(G)\ltimes \Hom(K, Z(G))
\]
with the action of $\Aut(G)$ on $\Hom(K, Z(G)$ given by post-composition. Finally notice that $\Inn(G\times K) \cong \Inn(G)$ lies in the $\Aut(G)$ factor of $\Aut_T(G\times K)$ and hence we have
\[
\Gamma_{G\times K} = \Out_T(G\times K) \cong \Out(G)\ltimes \Hom(K, Z(G)).
\]
\end{proof}

\end{document}